\numberwithin{equation}{section}
\newtheorem{thm}{Theorem}[section]
\newtheorem{prop}[thm]{Proposition}
\newtheorem{lem}[thm]{Lemma}
\newtheorem{cor}[thm]{Corollary}
\newtheorem*{conj*}{Conjecture}
\newtheorem*{lem*}{Lemma}
\newtheorem*{recipe*}{Recipe}
\theoremstyle{remark}
\newtheorem{rem}[thm]{Remark}
\newtheorem{defn}[thm]{Definition}
\newtheorem{ex}[thm]{Example}
\newtheorem*{rem*}{Remark}
\newtheorem*{defn*}{Definition}
\newtheorem*{ex*}{Example}
\newcommand{\Z}{\mathbb{Z}}
\newcommand{\calS}{\mathcal{S}}
\newcommand{\calT}{\mathcal{T}}
\newcommand{\calU}{\mathcal{U}}
\newcommand{\calV}{\mathcal{V}}
\newcommand{\calX}{\mathcal{X}}
\newcommand{\calY}{\mathcal{Y}}
\newcommand{\schur}{\mathfrak{s}}
\newcommand{\elementary}{\mathfrak{e}}
\DeclareMathOperator{\sort}{sort}
\newcommand{\sorteq}{\overset{\sort}{=}}
\newcommand{\defeq}{\overset{\triangle}{=}}
\DeclareMathOperator{\LHS}{LHS}
\DeclareMathOperator{\RHS}{RHS}
\DeclareMathOperator{\sub}{sub}
\begin{document}

\title{Overlap Identities for Littlewood-Schur Functions} 
\author{Helen Riedtmann}
\thanks{This research was partially supported by the Forschungskredit of the University of Zurich, grant no. FK-15-089}

\subjclass[2010]{Primary 05E05; Secondary 11M50}

\begin{abstract} Our results revolve around a new operation on partitions, which we call overlap. We prove two overlap identities for so-called Littlewood-Schur functions. Littlewood-Schur functions are a generalization of Schur functions, whose study was introduced by Littlewood. More concretely, the Littlewood-Schur function $LS_\lambda(\calX; \calY)$ indexed by the partition $\lambda$ is a polynomial in the variables $\calX \cup \calY$ that is symmetric in both $\calX$ and $\calY$ separately. The first overlap identity represents $LS _\lambda(\calX; \calY)$ as a sum over subsets of $\calX$, while the second overlap identity essentially represents $LS_\lambda(\calX; \calY)$ as a sum over pairs of partitions whose overlap equals $\lambda$. Both identities are derived by applying Laplace expansion to a determinantal formula for Littlewood-Schur functions due to Moens and Van der Jeugt. In addition, we give two visual characterizations for the set of all pairs of partitions whose overlap is equal to a partition $\lambda$.
\end{abstract}

\maketitle

\section{Introduction}
We introduce an operation on partitions, which we call \emph{overlap}. This operation naturally leads to two identities for Schur functions. Schur functions are symmetric functions that are indexed by partitions. They are considered the most natural basis for the ring of symmetric functions owing to their connection with the irreducible representations of the symmetric group. A further reason is their orthogonality with respect to the Hall inner product, which also admits a representation theoretic interpretation. There are a variety of different definitions for Schur functions, each emphasizing one facet of this versatile symmetric function. The determinantal definition originally used by Schur is given on page \pageref{3_defn_Schur}, where we also introduce the notation $\schur_\lambda$ for the Schur function indexed by the partition $\lambda$.

We formally define the overlap of two partitions on page~\pageref{3_defn_overlap} but for now let it suffice to introduce the notation that will permit us to state the first and the second overlap identities for Schur functions: if a partition $\lambda$ is the $(m,n)$-overlap of $\mu$ and $\nu$, we write $\lambda = \mu \star_{m,n} \nu$. In addition, we denote its sign by $\varepsilon(\mu, \nu)$.

The first overlap identity states that for any set $\calX$ consisting of $m + n$ pairwise distinct variables,
\begin{align*}
\schur_{\mu \star_{m,n} \nu} (\calX) = \sum_{\substack{\calS, \calT}} \frac{\varepsilon(\mu, \nu) \schur_\mu(\calS) \schur_\nu(\calT)}{\Delta(\calS; \calT)}
\end{align*}
where the sum is over all disjoint subsets $\calS$ and $\calT$ of $\calX$ with $m$ and $n$ elements, respectively, so that their union is $\calX$. The symbol $\Delta(\calS; \calT)$ denotes the product of all pairwise differences between $s \in \calS$ and $t \in \calT$. Although the concept of overlap seems to be new, this result is not. Recalling that Schur functions can be defined as fractions of determinants, Dehaye derives this identity by expanding the determinant in the numerator with the help of Laplace expansion \cite{dehaye12}. Taking the idea of applying Laplace expansion to the determinantal definition as a starting point, we show a second overlap identity: let $\calS$ and $\calT$ be sets consisting of $m$ and $n$ variables, respectively, and set $\calX = \calS \cup \calT$. If $\Delta(\calS; \calT) \neq 0$, then
\begin{align*}
\schur_\lambda(\calX) ={} & \sum_{\substack{\mu, \nu: \\ \mu \star_{m,n} \nu = \lambda}} \frac{\varepsilon(\mu, \nu) \schur_\mu(\calS) \schur_\nu(\calT)}{\Delta(\calS; \calT)}
.
\end{align*}
In fact, the two identities for Schur functions presented here are mere corollaries to two of our main results, namely the first and the second overlap identities for Littlewood-Schur functions. Littlewood-Schur functions are a generalization of Schur functions, whose combinatorial definition appeared for the first time in the work of Littlewood \cite{littlewood}. These functions were studied under a variety of different names: they are called hook Schur functions by Berele and Regev \cite{berele_regev}, supersymmetric polynomials by Nicoletti, Metropolis and Rota \cite{metropolis}, super-Schur functions by Brenti \cite{brenti}, and Macdonald denotes them $s_\lambda(x/y)$ \cite[p.~58ff]{mac}. 
We follow Bump and Gamburd in calling them Littlewood-Schur functions and denoting them $LS_\lambda(\calX, \calY)$ \cite{bump06}. Given two sets of variables $\calX$ and $\calY$, the Littlewood-Schur function associated to a partition $\lambda$ is defined by
\begin{align*}
LS_\lambda(\calX; \calY) = \sum_{\mu,\nu} c^\lambda_{\mu \nu} \schur_\mu(\calX) \schur_{\nu'}(\calY)
\end{align*}
where $c^\lambda_{\mu \nu}$ are Littlewood-Richardson coefficients. In \cite{vanderjeugt}, Moens and Van der Jeugt show that Littlewood-Schur functions can also be described by a determinantal formula, which forms the basis for the derivation of the overlap identities.

Dropping a few technical conditions, the first and the second overlap identities for Littlewood-Schur functions have the following prerequisites: let $\calX$ and $\calY$ be sets consisting of $n$ and $m$ variables, respectively, and let $\lambda$ be a partition with $(m,n)$-index $k$. (Turn to page~\pageref{3_defn_index} for the definition of index.) The first overlap identity states that if $(\lambda_1, \dots, \lambda_{n - k}) = \mu \star_{l, n - k - l} \nu$, then 
\begin{align*}
LS_{\lambda} (-\calX; \calY) ={} & \sum_{\calS, \calT} \frac{\varepsilon(\mu, \nu) LS_{\mu + \left\langle k^l \right\rangle}(-\calS; \calY) LS_{\nu \cup (\lambda_{n + 1 - k}, \lambda_{n + 2 - k}, \dots)}(- \calT; \calY)}{\Delta(\calT; \calS)}
\end{align*}
where the sum ranges over all disjoint subsets $\calS$ and $\calT$ of $\calX$ with $l$ and $n - l$ elements, respectively, so that their union is $\calX$. An explanation of the symbols that may not be familiar to the reader can be found in Sections~\ref{3_section_sequences} and \ref{3_section_partitions}. Partitioning $\calX$ into two disjoint subsets, say $\calS$ and $\calT$, consisting of $l$ and $n - l$ variables, respectively, the second overlap identity states that
\begin{align*}
LS_{\lambda} (-\calX; \calY) ={} & \sum_{p = 0}^{\min\{l,m\}} \sum_{\substack{\calU, \calV}} \hspace{10pt} \sum_{\substack{\mu, \nu: \\ \mu \star_{l - p, n - k - l + p} \nu = (\lambda_1, \dots, \lambda_{n - k})}} \frac{\Delta(\calV; \calS) \Delta(\calT; \calU)}{\Delta(\calV; \calU) \Delta(\calT; \calS)} \\
& \times \varepsilon(\mu, \nu) LS_{\mu - \left\langle (m - k)^{l - p} \right\rangle} (-\calS; \calU) LS_{\nu \cup (\lambda_{n + 1 - k}, \lambda_{n + 2 - k}, \dots)} (-\calT; \calV)
\end{align*}
where $\calU$ and $\calV$ range over all disjoint subsets of $\calY$ with $p$ and $m - p$ elements, respectively, so that their union is equal to $\calY$.

In \cite{bump06}, Bump and Gamburd derive the simplest case of the first overlap identity for Littlewood-Schur functions (under slightly stronger assumptions). They use this identity to give a self-contained combinatorial derivation of formulas for averages of products and ratios of characteristic polynomials of random matrices from the unitary group. At the time the formulas themselves were already known but Bump and Gamburd's approach provided elegant proofs. In a subsequent paper \cite{mixed_ratios}, we will use the overlap identities presented in this paper to derive a new formula for averages of mixed ratios of characteristic polynomials of random matrices from the unitary group. Our interest in averages of this type is motivated by connections with number theory discovered by Keating and Snaith \cite{KS00zeta}. 

We also provide two visual descriptions for the notion of overlap, which are more intuitive than its formal definition. Let us illustrate the first visualization (which is formally stated in Proposition~\ref{3_prop_visual_interpretation_overlap}) on an example, as it is the reason why this operation is called the $(m,n)$-overlap of two partitions. Fix two non-negative integers $m$ and $n$, say 3 and 5, as well as a partition of length at most $m + n = 8$, say $\lambda = (4, 2, 2, 2, 2, 1, 0, 0)$. Notice that we have appended zeros, ensuring that $\lambda$ is represented by a sequence of length exactly 8. Pairs of partitions whose $(m,n)$-overlap equals $\lambda$ may be viewed as staircase walks in an $n \times m$ rectangle whose steps are labeled by the parts of $\lambda$. Let us consider the following diagram of a labeled staircase walk $\pi$ in a $5 \times 3$ rectangle:
\begin{center}
\begin{tikzpicture} 
\draw[step=0.5cm, thin] (0, 0) grid (2.5, 1.5);
\draw[ultra thick, ->] (0.5, 0) -- (0, 0);
\draw[ultra thick] (0.5, 0) -- (0.5, 0.5);
\draw[ultra thick] (0.5, 0.5) -- (2, 0.5);
\draw[ultra thick] (2, 0.5) -- (2, 1);
\draw[ultra thick] (2, 1) -- (2.5, 1);
\draw[ultra thick] (2.5, 1) -- (2.5, 1.5);
\node[anchor=west] at (2.5, 1.25) {\tiny{4}};
\node[anchor=south] at (2.25, 1) {\tiny{2}};
\node[anchor=west] at (2, 0.75) {\tiny{2}};
\node[anchor=south] at (1.75, 0.5) {\tiny{2}};
\node[anchor=south] at (1.25, 0.5) {\tiny{2}};
\node[anchor=south] at (0.75, 0.5) {\tiny{1}};
\node[anchor=west] at (0.5, 0.25) {\tiny{0}};
\node[anchor=south] at (0.25, 0) {\tiny{0}};
\end{tikzpicture} 
\end{center}
We always walk down stairs that connect the top-right and the bottom-left corners of a rectangle. By means of this diagram, we can now describe the pair of partitions $\mu$, $\nu$ that correspond to the staircase walk $\pi$, thus giving an example of overlap. In fact, we will visualize the two partitions with the help of Ferrers diagrams, which are defined on page \pageref{3_page_Ferrers_diagram}. The partition $\mu$ is specified by the diagram on the right-hand side:
\begin{center}
\begin{tikzpicture} 
\draw[step=0.5cm, thin] (0, 0) grid (0.5, 1.5);
\draw[step=0.5cm, thin] (0.5, 0.5) grid (2, 1.5);
\draw[step=0.5cm, thin] (2, 1) grid (2.5, 1.5);
\draw[ultra thick, ->] (0.5, 0) -- (0, 0);
\draw[ultra thick] (0.5, 0) -- (0.5, 0.5);
\draw[ultra thick] (0.5, 0.5) -- (2, 0.5);
\draw[ultra thick] (2, 0.5) -- (2, 1);
\draw[ultra thick] (2, 1) -- (2.5, 1);
\draw[ultra thick] (2.5, 1) -- (2.5, 1.5);
\node[anchor=west] at (2.5, 1.25) {\tiny{4}};
\node[anchor=west] at (2, 0.75) {\tiny{2}};
\node[anchor=west] at (0.5, 0.25) {\tiny{0}};
\end{tikzpicture} 
\begin{tikzpicture} 
\node(arrow) at (-0.75, 0.74) {$\mapsto$};
\draw[step=0.5cm, thin] (0, 0) grid (0.5, 1.5);
\draw[step=0.5cm, thin] (0.5, 0.5) grid (3, 1.5);
\draw[step=0.5cm, thin] (3, 0.999) grid (4.5, 1.5);
\draw[decoration={brace, raise=5pt},decorate] (2.5, 1.5) -- node[above=6pt] {\small{4 additional boxes}} (4.5, 1.5);
\draw[decoration={brace, raise=5pt, mirror},decorate] (2, 0.5) -- node[below=6pt] {\small{2 additional boxes}} (3, 0.5);
\end{tikzpicture} 
\end{center}
In words, the labels attached to the vertical steps of $\pi$ indicate how many boxes must be added to each row of the Ferrers diagram consisting of the boxes that lie above $\pi$ in order to obtain the Ferrers diagram of the partition $\mu$. We thus get $\mu = (9,6,1)$. An analogous construction gives the Ferrers diagram of $\nu$:
\begin{center}
\begin{tikzpicture} 
\draw[step=0.5cm, thin] (0.5, 0) grid (2, 0.5);
\draw[step=0.5cm, thin] (2, 0) grid (2.5, 1);
\draw[ultra thick, ->] (0.5, 0) -- (0, 0);
\draw[ultra thick] (0.5, 0) -- (0.5, 0.5);
\draw[ultra thick] (0.5, 0.5) -- (2, 0.5);
\draw[ultra thick] (2, 0.5) -- (2, 1);
\draw[ultra thick] (2, 1) -- (2.5, 1);
\draw[ultra thick] (2.5, 1) -- (2.5, 1.5);
\node[anchor=south] at (2.25, 1) {\tiny{2}};
\node[anchor=south] at (1.75, 0.5) {\tiny{2}};
\node[anchor=south] at (1.25, 0.5) {\tiny{2}};
\node[anchor=south] at (0.75, 0.5) {\tiny{1}};
\node[anchor=south] at (0.25, 0) {\tiny{0}};
\end{tikzpicture} 
\begin{tikzpicture} 
\node(arrow) at (-0.45, 0.74) {$\mapsto$};
\draw[step=0.5cm, thin] (0, 0) grid (0.5, 1);
\draw[step=0.5cm, thin] (0.5, 0) grid (1.5, 1.5);
\draw[step=0.5cm, thin] (1.499, 0) grid (2, 2);
\end{tikzpicture} 
\begin{tikzpicture} 
\node(arrow) at (-1.5, 0.85) {$\xmapsto{\text{adjust orientation}}$
};
\draw[step=0.5cm, thin] (0, 0) grid (1, 2);
\draw[step=0.5cm, thin] (1, 0.5) grid (1.5, 2);
\draw[step=0.5cm, thin] (1.5, 1.499) grid (2, 2);
\end{tikzpicture} 
\end{center}
The only difference is that the resulting collection of boxes needs to be rotated by 180 degrees and then transposed to adjust their orientation. We conclude that the $(3,5)$-overlap of the partitions $\mu = (9, 6, 1)$ and $\nu = (4, 3, 3, 2)$ is $\lambda = (4, 2, 2, 2, 2, 1)$; in symbols, $\mu \star_{m,n} \nu = \lambda$. In brief, the $(3,5)$-overlap of a pair of partitions encodes which boxes must be deleted in order to assemble the diagrams of both partitions into a $5 \times 3$ rectangle, \textit{i.e.}\ it keeps track of where the two partitions overlap. In addition, we define the sign of an overlap to be $(-1)$ to the power of the number of boxes below the staircase walk that corresponds to it. In our example the sign is thus given by $\varepsilon(\mu, \nu) = (-1)^5$. One application of this visualization for two overlapping partitions is a new proof of the dual Cauchy identity. 

\subsection{Structure of this paper}
In Section~\ref{3_section_background_and_notation}, we give the required background on partitions, Schur functions and Littlewood-Schur functions. In Section~\ref{3_section_lapalace_expansion_for_LS}, we introduce the formal definition of overlap and then use Laplace expansion to derive two overlap identities for Littlewood-Schur functions. Section~\ref{3_section_visualizations_of_overlap} contains two visual characterizations for the set of all pairs of partitions with the same overlap. Applying these visualizations to the second overlap identity results in more overlap identities, which are listed in Section~\ref{3_section_more_overlap_identities}.

\section{Background and notation} \label{3_section_background_and_notation}
Before presenting the required combinatorial background, let us fix some general notation: we
use the symbol $\defeq$ to denote an equality between the quantities on its left-hand side and its right-hand side which defines the quantity on its left-hand side. Furthermore,
$\LHS$/$\RHS$ always denotes the left-hand/right-hand side of the equality under consideration.

\subsection{Sequences and sets of variables} \label{3_section_sequences}
Throughout this paper a sequence will be a \emph{finite} enumeration of elements, such as \label{symbol_sequence} $\calX = \left(\calX_1, \dots, \calX_n \right)$. Its length is the number of its elements, denoted by \label{symbol_length_of_sequence} $l(\calX) = n$. A subsequence $\calY$ of $\calX$ is a sequence given by $\calY_k = \calX_{n_k}$ where $1 \leq n_1 < n_2 < \dots \leq n$ is an increasing sequence of indices; in other words, if $K$ is a subsequence of \label{symbol_n_in_square_brackets} $[n] = (1,2, \dots, n)$, then the $K$-subsequence of $\calX$ is given by \label{symbol_subsequence_indexed_by_K} $\calX_K = \left(\calX_{K_1}, \dots, \calX_{K_{l(K)}}\right)$. We denote the complement of a subsequence \label{symbol_subsequence} $\calY \subset \calX$ by \label{symbol_complement_of_subsequence} $\calX \setminus \calY \subset \calX$. The union of two sequences \label{symbol_union_of_sequences} $\calX \cup \calY$ is obtained by appending $\calY$ to $\calX$; we sometimes add subscripts to indicate the lengths of the two sequences in question. All other operations on sequences, such as \label{symbol_sum_of_sequences} addition, are understood to be element wise.

For sequences whose elements lie in a ring, we define the following two functions: \label{3_page_Delta}
\begin{align*}
\Delta(\calX) = \prod_{1 \leq i < j \leq n} (\calX_i - \calX_j) \:\text{ and }\: \Delta(\calX; \calY) = \prod_{x \in \calX, y \in \calY} (x - y)
.
\end{align*}
If two sequences $\calX$ and $\calY$ of the same length are equal up to reordering their elements, we write \label{symbol_sorteq} $\calX \sorteq \calY$. In that case $\Delta(\calX) = \varepsilon(\sigma)\Delta(\calY)$, where $\varepsilon(\sigma)$ is the sign of the sorting permutation $\sigma$ with the property that $\sigma(\calX) = \calY$. We implicitly view all sets of variables as sequences but for simplicity of notation we will not fix the order of the variables explicitly. It is important, however, to stick to one order throughout a computation or within a formula.

\subsection{Partitions} \label{3_section_partitions}
A partition is a non-increasing sequence $\lambda = (\lambda_1, \dots, \lambda_n)$ of non-negative integers, called parts. If two partitions only differ by a sequence of zeros, we regard them as equal. By an abuse of notation, we say that the length of a partition is the length of the subsequence that consists of its positive parts. The size of a partition $\lambda$ is the sum of its parts, denoted $|\lambda|$.

\label{3_page_Ferrers_diagram} The Ferrers diagram of a partition $\lambda$ is defined as the set of points $(i,j) \in \Z \times \Z$ such that $1 \leq i \leq \lambda_j$; it is often convenient to replace the points by square boxes. Turn to page \pageref{3_ferrers_diagram} for some examples of Ferrers diagrams. We use the non-standard convention that for any partition $\lambda$ and any non-negative integer $i$, $(i, 0) \in \lambda$ and $(0, i) \in \lambda$, although the Ferrers diagram of $\lambda$ only contains points with strictly positive coordinates. Similarly, we define the $0$-th part of any partition to be infinitely large. This counter-intuitive usage will allow us to avoid case analysis during later computations. Given two partitions $\kappa$ and $\lambda$, we say that $\kappa$ is a subset of $\lambda$ if their Ferrers diagrams satisfy that containment relation. Note that $\kappa \subset \lambda$ is our shorthand for both subset and subsequence. It will be clear from the context whether we view $\kappa$ and $\lambda$ as sequences or diagrams. For instance, we will study sub\emph{sets} of partitions whose Ferrers diagram are rectangular -- not subsequences of constant sequences. We denote by $\langle m^n \rangle$ the partition $(m, \dots, m)$ of length $n$, whose Ferrers diagram is a rectangle.

The conjugate partition $\lambda'$ of $\lambda$ is given by the condition that the Ferrers diagram of $\lambda'$ is the transpose of the Ferrers diagram of $\lambda$, \textit{e.g.}\ the conjugate of $(5,5,2)$ is $(3,3,2,2,2)$. We note for later reference that if the union of two partitions $\mu$ and $\nu$ is a partition, then $(\mu \cup \nu)' = \mu' + \nu'$. 

\subsection{Schur functions} \label{3_section_schur}
We briefly present symmetric polynomials and Schur functions, following the conventions of Macdonald \cite{mac}. For each non-negative integer $r$, the $r$-th elementary symmetric polynomial is defined by
\begin{align*}
\elementary_r(\calX) = \sum_{\substack{\calY \subset \calX: \\ l(\calY) = r}} \prod_{y \in \calY} y
.
\end{align*}
We observe that for any set of variables $\calX$, the $l(\calX)$-th elementary polynomial $\elementary_{l(\calX)}(\calX)$ is just the product of all elements of $\calX$. This observation motivates the following non-standard notation:
\begin{align} \label{3_eq_e(X)_defn}
\elementary(\calX) = \prod_{x \in \calX} x
.
\end{align} 
The elementary polynomials are called symmetric because they are invariant under permutations of the elements of $\calX$. In this context, Schur functions are another family of symmetric polynomials, indexed by partitions. 

\begin{defn} [Schur functions] \label{3_defn_Schur} Let $\lambda$ be a partition and $\calX$ a set of pairwise distinct variables of length $n$. If $l(\lambda) > n$, then $\schur_\lambda(\calX) = 0$; otherwise, 
\begin{align*}
\schur_\lambda(\calX) = \frac{\det \left( x^{\lambda_j + n - j} \right)_{x \in \calX, 1 \leq j \leq n}}{\Delta(\calX)}
.
\end{align*}
This definition can be extended to sets of variables that contain repetitions given that the polynomial $\Delta(\calX)$ is a divisor of the determinant in the numerator.
\end{defn}
Using that the determinant is multilinear, one quickly checks that the Schur function $\schur_\lambda(\calX)$ is homogeneous of degree $|\lambda|$. The multilinearity of determinants also entails that for any set $\calX$ consisting of exactly $n$ variables, 
\begin{align}
\schur_{\lambda + \langle m^n \rangle} (\calX) = \elementary(\calX)^m \schur_\lambda(\calX)
.
\end{align} 

\subsection{Littlewood-Schur functions}
\begin{defn} [Littlewood-Schur functions] \label{3_defn_comb_LS} Let $\calX$ and $\calY$ be two sets of variables. For any partition $\lambda$, define
$$LS_\lambda(\calX; \calY) = \sum_{\mu, \nu} c^\lambda_{\mu \nu} \schur_\mu(\calX) \schur_{\nu'}(\calY)$$
where $c^\lambda_{\mu \nu}$ are Littlewood-Richardson coefficients; their definition can be found in \cite[p.~142]{mac}.
\end{defn}

We will not work with the combinatorial definition of Littlewood-Schur functions. Instead, the identities for Littlewood-Schur functions described in
Section~\ref{3_section_two_overlap_identities_for_LS_functions} are based on a determinantal formula for Littlewood-Schur functions discovered by Moens and Van der Jeugt [MdJ03]. In order to state their result, we need
to introduce the notion of an index of a partition.

\begin{defn} [index of a partition] \label{3_defn_index} The $(m,n)$-index of a partition $\lambda$ is the largest (possibly negative) integer $k \leq \min\{m,n\}$ that satisfies $(m + 1 - k, n + 1 - k) \not\in \lambda$. Making use of the convention introduced in Section~\ref{3_section_partitions}, it is equivalent to define the $(m,n)$-index of $\lambda$ as the smallest integer $k$ so that $(m - k, n - k) \in \lambda$.
\end{defn}

It is worth noting that this definition is not equivalent to the definition used in \cite{vanderjeugt}. They work with a similar notion of index except that it is not invariant under conjugation. 

Given the Ferrers diagram of a partition $\lambda$, its $(m,n)$-index can be read off visually:
If $(m,n) \not\in \lambda$, then $k$ is the side of the largest square with bottom-right corner $(m,n)$ that fits next to the diagram of the partition $\lambda$. If $(m,n) \in \lambda$, then $-k$ is the side of the largest square with top-left corner $(m,n)$ that fits inside the diagram of $\lambda$. Let us illustrate this by a sketch: the area colored in gray is the diagram of the partition $\lambda = (7, 4, 2, 2)$.
\vspace{5pt}
\begin{center}
\begin{tikzpicture}
\fill[black!10!white] (0, 0) rectangle (1, 2);
\fill[black!10!white] (1, 1) rectangle (2, 2);
\fill[black!10!white] (2, 1.5) rectangle (3.5, 2);
\draw[step=0.5cm, thin] (0, 0) grid (1, 2);
\draw[step=0.5cm, thin] (1, 0.999) grid (2, 2);
\draw[step=0.5cm, thin] (2, 1.499) grid (3.5, 2);
\draw (1, 0) rectangle (1.5, -0.5);
\draw (2, 1.5) rectangle (3, 0.5);
\fill[white] (1, 1.5) rectangle (1.5, 1);
\draw (1, 1.5) rectangle (1.5, 1);
\node[anchor=north west] at (1.5, -0.5) {$\scriptstyle{(3, 5)}$};
\node[anchor=north west] at (3, 0.5) {$\scriptstyle{(6, 3)}$};
\node[anchor=south east] at (1, 1.5) {$\scriptstyle{(2, 1)}$};
\node at (1.5, -0.5) {\tiny{\textbullet}};
\node at (3, 0.5) {\tiny{\textbullet}};
\node at (1, 1.5) {\tiny{\textbullet}};
\draw[decoration={brace, raise=5pt},decorate] (3, 1.5) -- node[right=6pt] {$\scriptstyle{k}$} (3, 0.5);
\draw[decoration={brace, raise=5pt, mirror},decorate] (1.5, -0.5) -- node[right=6pt] {$\scriptstyle{k}$} (1.5, 0);
\draw[decoration={brace, raise=5pt},decorate] (1, 1) -- node[left=6pt] {$\scriptstyle{-k}$} (1, 1.5);
\end{tikzpicture}
\end{center}
We see that the $(6,3)$-index of $\lambda$ is 2, its $(3, 5)$-index is 1, and its $(2,1)$-index is $-1$. 

\begin{thm} [determinantal formula for Littlewood-Schur functions, adapted from \cite{vanderjeugt}] \label{3_thm_det_formula_for_Littlewood-Schur}
Let $\calX$ and $\calY$ be sets of variables of length $n$ and $m$, respectively, so that the elements of $\calX \cup \calY$ are pairwise distinct. Let $\lambda$ be a partition with $(m,n)$-index $k$. If $k$ is negative, then $LS_\lambda(-\calX; \calY) = 0$; otherwise,
\begin{align*}
LS_\lambda(-\calX; \calY) ={} & \varepsilon(\lambda) \frac{\Delta(\calY; \calX)}{\Delta(\calX) \Delta(\calY)} \\
& \times \det \begin{pmatrix} \left( (x - y)^{-1} \right)_{\substack{x \in \calX \\ y \in \calY}} & \left( x^{\lambda_j + n - m - j} \right)_{\substack{x \in \calX \\ 1 \leq j \leq n - k}} \\ \left( y^{\lambda'_i + m - n - i} \right)_{\substack{1 \leq i \leq m - k \\ y \in \calY}} & 0\end{pmatrix}
\end{align*}
where $\varepsilon(\lambda) = (-1)^{\left|\lambda_{[n - k]} \right|} (-1)^{mk} (-1)^{k(k - 1)/2}$.
\end{thm}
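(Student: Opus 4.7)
The plan is to reduce this theorem to the determinantal formula of Moens and Van der Jeugt established in \cite{vanderjeugt}. Their formula asserts essentially the same identity but is stated in the variables $\calX$ rather than $-\calX$, and it uses a variant notion of index that is not invariant under conjugation; the work therefore consists of reconciling these two conventions and handling the vanishing case $k<0$ separately.

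For the main identity when $k\geq 0$, I would first transcribe the Moens--Van der Jeugt formula in the present notation. The substitution $\calX\to -\calX$ is handled by means of the identity $\schur_\mu(-\calX)=(-1)^{|\mu|}\schur_\mu(\calX)$, which follows from Definition~\ref{3_defn_Schur} by pulling signs $(-1)^{\mu_j+n-j}$ out of each row of the determinant in the numerator and cancelling them against $\Delta(-\calX)=(-1)^{n(n-1)/2}\Delta(\calX)$. Plugging this into Definition~\ref{3_defn_comb_LS} yields
\[
LS_\lambda(-\calX;\calY)=\sum_{\mu,\nu}(-1)^{|\mu|}c^\lambda_{\mu\nu}\,\schur_\mu(\calX)\schur_{\nu'}(\calY),
\]
and the sign $(-1)^{|\mu|}$ is absorbed into the bordered matrix by extracting a sign from each of the rows and columns that carry $\calX$-dependence. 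The two index conventions are then matched by verifying that when $k\geq 0$ in the sense of Definition~\ref{3_defn_index}, the border block sizes $n-k$ and $m-k$ as well as the exponents $\lambda_j+n-m-j$ and $\lambda'_i+m-n-i$ agree with those in \cite{vanderjeugt} after a row/column permutation whose sign is recorded in $\varepsilon(\lambda)$; the factors $(-1)^{mk}$ and $(-1)^{k(k-1)/2}$ arise respectively from the $k$-column shift relating the two conventions and from the internal reordering within that shift.

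For the case $k<0$, I would argue directly from the combinatorial Definition~\ref{3_defn_comb_LS} that $LS_\lambda(-\calX;\calY)=0$. By Definition~\ref{3_defn_index}, $k<0$ forces $(m+1,n+1)\in\lambda$, so $\lambda$ does not fit in the $(m,n)$-hook. In any nonzero summand of the combinatorial expansion one needs $l(\mu)\leq n$ and $\nu_1=l(\nu')\leq m$; together with the Littlewood--Richardson condition $c^\lambda_{\mu\nu}\neq 0$, the column-strictness of LR fillings then forces $\lambda_{n+1}\leq m$, which contradicts the failure of $\lambda$ to fit in the hook and gives the desired vanishing.

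The main obstacle is the sign bookkeeping required to reconcile the two determinantal formulas. The three factors composing $\varepsilon(\lambda)=(-1)^{|\lambda_{[n-k]}|}(-1)^{mk}(-1)^{k(k-1)/2}$ come from distinct sources -- the substitution $\calX\to -\calX$ restricted to the $n-k$ border columns, the $k$-column shift relating the two index conventions, and the internal reordering within that shift -- and verifying that they combine correctly can be cross-checked against the symmetry $LS_\lambda(\calX;\calY)=LS_{\lambda'}(\calY;\calX)$, under which our formula must be invariant because Definition~\ref{3_defn_index} is conjugation-invariant whereas the Moens--Van der Jeugt index is not.
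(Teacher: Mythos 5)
The paper gives no proof of this theorem at all: it is stated as an adaptation of the Moens--Van der Jeugt determinantal formula, which is precisely what you propose --- transcribing their identity, tracking the signs produced by the substitution $\calX \to -\calX$ and by the change of index convention, and disposing of the case $k<0$ via the standard hook condition for Littlewood--Schur functions. Your outline identifies the correct sources of the sign factors in $\varepsilon(\lambda)$ and a valid vanishing argument (the Littlewood--Richardson inequality $\lambda_{n+1}\leq \mu_{n+1}+\nu_1\leq m$), so it follows essentially the same route the paper implicitly takes.
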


Clearly, the sign $\varepsilon(\lambda)$ does not only depend on the partition $\lambda$, but also on the lengths of the sets of variables $\calX$ and $\calY$. However, the additional parameters $m$ and $n$ are omitted for simplicity of notation. Owing to this determinantal formula, it becomes a linear algebra exercise to check basic properties of Littlewood-Schur functions, such as that $LS_\lambda(-\calX; \calY)$ is a homogeneous polynomial of degree $|\lambda|$, which possesses the property that $LS_{\lambda'}(\calX; \calY) = LS_\lambda(\calY; \calX)$. In addition, it follows easily that Littlewood-Schur functions are symmetric in both sets of variables separately; more precisely, $LS_\lambda(-\calX; \emptyset) = \schur_\lambda(-\calX)$ and $LS_\lambda(\emptyset;\calY) = \schur_{\lambda'}(\calY)$. Theorem~\ref{3_thm_det_formula_for_Littlewood-Schur} also allows us to give new elementary proofs for some old results, such a special case of Littlewood's formula for Littlewood-Schur functions whose partition is a square.

\begin{cor} [\cite{littlewood}] \label{3_cor_littlewood} Let $\calX$ and $\calY$ be sets of variables with $n$ and $m$ elements, respectively. For any integer $l \geq 0$,
\begin{align} \label{3_cor_littlewood_eq}
LS_{\left\langle (m + l)^n \right\rangle} (-\calX; \calY) = \elementary(-\calX)^l \Delta(\calY; \calX)
.
\end{align}
\end{cor}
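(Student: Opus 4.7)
The plan is to apply the determinantal formula of Theorem~\ref{3_thm_det_formula_for_Littlewood-Schur} directly to $\lambda = \langle (m+l)^n \rangle$. First, I would compute the $(m,n)$-index of $\lambda$. For every $1 \leq k \leq \min\{m,n\}$ we have $\lambda_{n+1-k} = m+l \geq m+1-k$, so $(m+1-k, n+1-k) \in \lambda$, whereas $(m+1, n+1) \notin \lambda$ because $\lambda$ has only $n$ parts. Hence the $(m,n)$-index of $\lambda$ equals $k = 0$, which in particular is non-negative. I would also note that $\lambda'_i = n$ for $1 \leq i \leq m$, so on substituting $k=0$ the exponents in the theorem simplify to $\lambda_j + n - m - j = n + l - j$ for $1 \leq j \leq n$ and $\lambda'_i + m - n - i = m - i$ for $1 \leq i \leq m$. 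The sign becomes $\varepsilon(\lambda) = (-1)^{|\lambda|} = (-1)^{n(m+l)}$.

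Next, I would evaluate the $(n+m)\times(n+m)$ determinant
\begin{align*}
D = \det \begin{pmatrix} \left( (x - y)^{-1} \right)_{x, y} & \left( x^{n + l - j} \right)_{x, j} \\ \left( y^{m - i} \right)_{i, y} & 0 \end{pmatrix}.
\end{align*}
Since the bottom-right $m \times n$ block is zero, I would move the last $n$ columns to the front, which introduces a sign $(-1)^{mn}$ and leaves a block-triangular matrix with diagonal blocks $(x^{n+l-j})_{x,j}$ of size $n \times n$ and $(y^{m-i})_{i,y}$ of size $m \times m$. Factoring $x^l$ out of each of the $n$ rows of the first block reduces it to the standard Vandermonde determinant $\Delta(\calX)$, so that block contributes $\elementary(\calX)^l \Delta(\calX)$; the second block is a transposed Vandermonde and evaluates to $\Delta(\calY)$. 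Therefore $D = (-1)^{mn}\elementary(\calX)^l \Delta(\calX) \Delta(\calY)$.

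Substituting into the determinantal formula yields
\begin{align*}
LS_{\langle (m+l)^n \rangle}(-\calX;\calY) = (-1)^{n(m+l)+mn}\, \elementary(\calX)^l \, \Delta(\calY;\calX) = (-1)^{nl}\elementary(\calX)^l \Delta(\calY;\calX),
\end{align*}
and the identity $(-1)^{nl}\elementary(\calX)^l = \elementary(-\calX)^l$ (since $|\calX|=n$) finishes the proof. I would finally remark that the pairwise-distinctness hypothesis of Theorem~\ref{3_thm_det_formula_for_Littlewood-Schur} is harmless: both sides of~\eqref{3_cor_littlewood_eq} are polynomial in the variables, so equality on the Zariski-dense set of generic tuples extends to all tuples.

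The only delicate step is the bookkeeping of signs: one has to combine the prefactor $\varepsilon(\lambda) = (-1)^{n(m+l)}$, the column-swap sign $(-1)^{mn}$ from the Laplace expansion, and finally the sign implicit in $\elementary(-\calX)^l$. Everything else is a direct Vandermonde calculation, so this sign tracking is really the only thing that could go wrong.
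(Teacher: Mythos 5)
Your proposal is correct and follows essentially the same route as the paper: compute that the $(m,n)$-index of $\langle (m+l)^n\rangle$ is $0$, apply the determinantal formula of Theorem~\ref{3_thm_det_formula_for_Littlewood-Schur}, reduce the anti-diagonal block determinant (picking up the sign $(-1)^{mn}$) to two Vandermonde factors that cancel $\Delta(\calX)\Delta(\calY)$, and extend to non-distinct variables by polynomiality. The sign bookkeeping, including $(-1)^{nl}\elementary(\calX)^l=\elementary(-\calX)^l$, matches the paper's computation.
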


In this proof we will omit obvious subscripts, such as $x \in \calX$, as they clutter up the block matrices unnecessarily. Throughout this paper we take the liberty of omitting similarly intuitive subscripts during proofs whenever we feel that they are more hindrance than help.

\begin{proof} First suppose that the elements of $\calX \cup \calY$ are pairwise distinct. Given that the $(m,n)$-index of the partition $\left\langle (m + l)^n \right\rangle$ is 0,
\begin{align*}
LS_{\left\langle (m + l)^n \right\rangle}(-\calX; \calY) ={} & (-1)^{(m + l)n} \frac{\Delta(\calY; \calX)}{\Delta(\calX) \Delta(\calY)} \det \begin{pmatrix}  (x - y)^{-1} \hspace{-3.6pt} & \left( x^{n + l - j} \right)_{\substack{1 \leq j \leq n}} \\ \left( y^{m - i} \right)_{\substack{1 \leq i \leq m}} \hspace{-3.6pt} & 0\end{pmatrix}
\! .
\intertext{The two off-diagonal blocks in the matrix are squares. Hence,}
LS_{\left\langle (m + l)^n \right\rangle}(-\calX; \calY) ={} & (-1)^{(m + l)n + mn} \frac{\Delta(\calY; \calX)}{\Delta(\calX) \Delta(\calY)} \\ 
& \times \det \left( x^{n + l - j} \right)_{\substack{1 \leq j \leq n}} \det \left( y^{m - i} \right)_{\substack{1 \leq i \leq m }}
.
\intertext{Using that the determinant is multilinear, we infer that both determinants are essentially Vandermonde determinants, which cancel with $\Delta(\calX)$ and $\Delta(\calY)$, respectively. This allows us to conclude that}
LS_{\left\langle (m + l)^n \right\rangle}(-\calX; \calY) ={} & (-1)^{ln} \Delta(\calY; \calX) \elementary(\calX)^l = \Delta(\calY; \calX) \elementary(-\calX)^l
.
\end{align*}
If the elements of $\calX \cup \calY$ are not pairwise distinct, the equality in \eqref{3_cor_littlewood_eq} is a direct consequence of the fact that both sides are polynomials in $\calX \cup \calY$, which agree on infinitely many points.
\end{proof}

\section{Laplace expansion for Littlewood-Schur functions} \label{3_section_lapalace_expansion_for_LS}
In this section we present two equalities on Littlewood-Schur functions which are based on the Laplace expansion of determinants. Let us quickly recall this classical result from linear algebra. For an $n \times n$ matrix $A = \left( a_{ij} \right)$ and two subsequences $I, J \subset [n]$, we need the following notation:
\begin{align*} A_{IJ} = \left(a_{ij}\right)_{\substack{i \in I \\ j \in J}} \text{ and its complement } A_{\bar{I}\bar{J}} = \left(a_{ij}\right)_{\substack{i \not\in I \\ j \not\in J}}
.
\end{align*}

\begin{lem} [Laplace expansion] \label{3_lem_laplace_expansion_matrix} Let $A$ be an $n \times n$ matrix. For a subsequence $K \subset [n]$, the determinant of $A$ can be expanded in the two following ways:
\begin{enumerate}
\item $\displaystyle \det(A) = \sum_{\substack{J \subset [n]: \\ l(J) = l(K)}} \varepsilon(\sigma(K, J)) \det \left(A_{KJ} \right) \det \left( A_{\bar{K}\bar{J}} \right)$
\item $\displaystyle \det(A) = \sum_{\substack{I \subset [n]: \\ l(I) = l(K)}} \varepsilon(\sigma(I, K)) \det \left(A_{IK} \right) \det \left( A_{\bar{I} \bar{K}} \right)$
\end{enumerate}
where $\varepsilon(\sigma(I,J))$ is the sign of the permutation $\sigma(I,J) \in S_n$ given by the conditions that $\sigma(I) = J$ (and thus $\sigma(\bar{I}) = \bar{J}$) and that $\sigma$ respects the relative order of the indices in $I$ and $J$ as well as in $\bar{I}$ and $\bar{J}$.
\end{lem}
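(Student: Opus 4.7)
The plan is to derive statement~(1) directly from the Leibniz formula
\[
\det(A) = \sum_{\sigma \in S_n} \varepsilon(\sigma) \prod_{i=1}^n a_{i, \sigma(i)},
\]
by partitioning the symmetric group according to the image $J = \sigma(K)$, and then deduce statement~(2) by applying (1) to the transpose $A^T$.

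For (1), I would fix $K \subset [n]$ and, for each length-$l(K)$ subsequence $J \subset [n]$, collect all permutations $\sigma \in S_n$ with $\sigma(K) = J$. Each such $\sigma$ can be written uniquely as $\sigma = \sigma(K,J) \circ \rho$, where $\rho \in S_n$ is block-diagonal in the sense that $\rho(K) = K$ and $\rho(\bar K) = \bar K$. Writing $\rho$ as a pair $(\rho_1, \rho_2)$ of bijections $K \to K$ and $\bar K \to \bar K$, multiplicativity of the sign gives $\varepsilon(\sigma) = \varepsilon(\sigma(K,J)) \, \varepsilon(\rho_1) \, \varepsilon(\rho_2)$. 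Pulling $\varepsilon(\sigma(K,J))$ out of the inner sum, the remaining contribution factors as
\[
\Bigl(\sum_{\rho_1} \varepsilon(\rho_1) \prod_{i \in K} a_{i, \sigma(K,J)\rho_1(i)}\Bigr) \Bigl(\sum_{\rho_2} \varepsilon(\rho_2) \prod_{i \in \bar K} a_{i, \sigma(K,J)\rho_2(i)}\Bigr),
\]
which, after identifying the index sets via the order-preserving bijections $K \to [l(K)]$ and $\bar K \to [n - l(K)]$, is precisely $\det(A_{KJ}) \det(A_{\bar K \bar J})$ by the Leibniz formula applied to the two submatrices. Summing over $J$ yields (1).

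For (2), I would apply (1) to $A^T$ with the same row-subsequence $K$, which now corresponds to a column-subsequence of $A$. Since $\det(A^T) = \det(A)$, $(A^T)_{KI} = (A_{IK})^T$ has the same determinant as $A_{IK}$, and the shuffle sign satisfies $\varepsilon(\sigma(K,I)) = \varepsilon(\sigma(I,K))$ because $\sigma(I,K)$ is the inverse of $\sigma(K,I)$, the expansion in (1) for $A^T$ reads off verbatim as (2) for $A$.

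The step most likely to cause trouble is the sign bookkeeping: one needs to verify that the factorization $\sigma = \sigma(K,J) \circ \rho$ is unique and that the induced bijections $\rho_1, \rho_2$, once transported to $[l(K)]$ and $[n - l(K)]$ via the order-preserving relabelings, really do produce signs that multiply to $\varepsilon(\rho)$. This is a straightforward but slightly tedious verification — no new idea is required, only attention to the convention that $\sigma(K,J)$ respects the relative order of indices in $K$ and in $\bar K$ separately.
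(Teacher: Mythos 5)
Your proof is correct. Note that the paper does not prove this lemma at all: it is recalled as a classical fact from linear algebra, so there is no argument of the paper's to compare against. Your route --- grouping the terms of the Leibniz formula by the image $J = \sigma(K)$, using the unique factorization $\sigma = \sigma(K,J)\circ\rho$ with $\rho$ preserving $K$ and $\bar K$, and checking that the relabeled restrictions $\rho_1,\rho_2$ reproduce the Leibniz sums for $\det(A_{KJ})$ and $\det(A_{\bar K\bar J})$ --- is the standard proof, and your deduction of (2) from (1) via $A^T$, using $\det\bigl((A^T)_{KI}\bigr) = \det(A_{IK})$ and $\varepsilon(\sigma(K,I)) = \varepsilon(\sigma(I,K))$ since the two shuffles are inverse to each other, is also sound. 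The only point worth stating explicitly in a written-out version is the one you already flag: that the order-preserving relabelings conjugate $\rho_1$ (resp.\ $\rho_2$) to a permutation of $[l(K)]$ (resp.\ $[n-l(K)]$) of the same sign, which is immediate because $\sigma(K,J)$ restricted to $K$ and to $\bar K$ is order-preserving.
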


\subsection{Two overlap identities for Littlewood-Schur functions} \label{3_section_two_overlap_identities_for_LS_functions}
Before stating and proving the first and the second overlap identity, we formally introduce the notion of overlapping two partitions.

\begin{defn} [overlap] \label{3_defn_overlap} For any positive integer $n$, we define the partition $\rho_n$ by \label{symbol_rho_n} $\rho_n = (n - 1, \dots, 1, 0)$; for $n = 0$, we use the convention that $\rho_0$ is the empty partition. Let $\mu$, $\nu$ be partitions of length at most $m$ and $n$, respectively. The $(m,n)$-overlap of $\mu$ and $\nu$, denoted \label{symbol_overlap} $\mu \star_{m, n} \nu$, is the partition that satisfies 
\begin{align} \label{3_condition_overlap}
\mu \star_{m,n} \nu + \rho_{m + n} \sorteq (\mu + \rho_m) \cup (\nu + \rho_n)
\end{align}
if it exists; otherwise, we set $\mu \star_{m, n} \nu = \infty$. Here, $\infty$ is just a symbol with the property that $LS_\infty(\calX; \calY) = 0$ for any sets of variables $\calX$ and $\calY$, \textit{i.e.}\ it symbolizes a partition that contains the rectangle $\left\langle (m + 1)^{n + 1} \right\rangle$ for any pair of non-negative integers $m$ and $n$.

If the condition in \eqref{3_condition_overlap} is satisfied, then the sign of the overlap, which we denote by $\varepsilon_{m,n}(\mu,\nu)$, is just the sign of the corresponding sorting permutation; otherwise, we set the sign equal to 1. This notion is well defined because, unless the $(m,n)$-overlap of $\mu$ and $\nu$ is infinity, the sequence on the left-hand side in \eqref{3_condition_overlap} is strictly decreasing. If the parameters $m, n$ are clear from the context, they are sometimes omitted. 
\end{defn}

\begin{thm} [first overlap identity] \label{3_thm_laplace_expansion_LS_cut_before_index} Let $\calX$ and $\calY$ be sets of variables with $n$ and $m$ elements, respectively, so that $\calX$ consists of pairwise distinct elements. Let $\lambda$ be a partition with $(m,n)$-index $k$. If $\lambda_{[n - k]}$ is the $(l, n - k - l)$-overlap of $\mu$ and $\nu$ for some integer $0 \leq l \leq \min\{n - k, n\}$ and some partitions $\mu$ and $\nu$, then
\begin{multline}
 \label{3_thm_laplace_expansion_LS_cut_before_index_eq}
LS_{\lambda} (-\calX; \calY) = \\ \sum_{\makebox[48pt]{$\substack{\calS, \calT \subset \calX: \\ \calS \cup_{l, n - l} \calT \sorteq \calX}$}} \frac{\varepsilon_{l, n - k - l}(\mu, \nu) LS_{\mu + \left\langle k^l \right\rangle}(-\calS; \calY) LS_{\nu \cup \lambda_{(n + 1 - k, n + 2 - k, \dots)}}(- \calT; \calY)}{\Delta(\calT; \calS)} 
.
\end{multline}
\end{thm}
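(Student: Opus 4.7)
The plan is to Laplace-expand the determinantal formula of Theorem~\ref{3_thm_det_formula_for_Littlewood-Schur}, following the strategy Dehaye used in the pure Schur case. Write $M$ for the block matrix appearing in that formula. The $(l,n-k-l)$-overlap condition says precisely that the exponents $(\lambda_j+n-k-j)_{j=1}^{n-k}$ labeling the upper-right block of $M$ split, as a multiset, into the $\mu$-group $(\mu_i+l-i)_{i=1}^l$ and the $\nu$-group $(\nu_i+(n-k-l)-i)_{i=1}^{n-k-l}$. I would first reorder the last $n-k$ columns of $M$ so that the $\mu$-columns occupy positions $m+1,\dots,m+l$; by the very definition of the overlap sign this costs precisely $\varepsilon_{l,n-k-l}(\mu,\nu)$.

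Next, apply Laplace expansion (Lemma~\ref{3_lem_laplace_expansion_matrix}(2)) along those $l$ freshly grouped columns. The bottom $m-k$ rows of $M$ vanish on them, so the nonzero contributions are indexed by subsequences $\calS\subset\calX$ of size $l$ with complement $\calT$. For each $\calS$, the $l\times l$ minor factors --- after pulling $x^{-(m-k)}$ out of each row and recognizing the result as a Jacobi--Trudi-type determinant --- as $\elementary(\calS)^{k-m}\schur_\mu(\calS)\Delta(\calS)$. The complementary $(m+n-k-l)\times(m+n-k-l)$ minor is, block by block, the matrix that Theorem~\ref{3_thm_det_formula_for_Littlewood-Schur} produces for $LS_{\tilde\nu}(-\calT;\calY)$ with $\tilde\nu:=\nu\cup(\lambda_{n+1-k},\lambda_{n+2-k},\dots)$. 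Verifying this requires checking that $\tilde\nu$ is a partition (which holds because $\nu_{n-k-l}\ge m-k\ge\lambda_{n+1-k}$) and that the $(m,n-l)$-index of $\tilde\nu$ equals $k$; both facts reduce to the observation that all parts of $\mu$ and $\nu$ are at least $m-k$, which in turn follows from $\lambda_{n-k}\ge m-k$.

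To combine the $l\times l$ minor with the remaining global prefactor $\Delta(\calY;\calX)/(\Delta(\calX)\Delta(\calY))$ and recognize $LS_{\mu+\langle k^l\rangle}(-\calS;\calY)$, I would record as an auxiliary lemma a strengthening of Corollary~\ref{3_cor_littlewood}: for any partition $\tilde\mu$ of length at most $|\calS|=l$,
\begin{align*}
LS_{\tilde\mu+\langle m^l\rangle}(-\calS;\calY)=\schur_{\tilde\mu}(-\calS)\,\Delta(\calY;\calS).
\end{align*}
This is proved by the same method as Corollary~\ref{3_cor_littlewood}: the $(m,l)$-index of $\tilde\mu+\langle m^l\rangle$ is $0$, so Laplace expansion along the top $l$ rows forces a single column choice and collapses the determinant to two Vandermonde-type factors. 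Applying this lemma with $\tilde\mu=\mu-\langle(m-k)^l\rangle$ (a partition because $\mu_l\ge m-k$) rewrites the $\calS$-side of the expansion as $LS_{\mu+\langle k^l\rangle}(-\calS;\calY)$. What remains, and is the main tedium of the proof, is the sign bookkeeping: the splittings $\Delta(\calX)=\varepsilon(\sigma(I))\Delta(\calS)\Delta(\calT)\Delta(\calS;\calT)$ (where $I\subset[n]$ is the index set of $\calS$ in $\calX$) and $\Delta(\calS;\calT)=(-1)^{l(n-l)}\Delta(\calT;\calS)$, the Laplace sign $\varepsilon(\sigma(I,\{m+1,\dots,m+l\}))$, and the global signs $\varepsilon(\lambda)$, $\varepsilon(\tilde\nu)$, $(-1)^{|\mu|+l(m-k)}$ all telescope by elementary parity arithmetic to produce exactly $\varepsilon_{l,n-k-l}(\mu,\nu)$ in each summand. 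Finally, the blanket assumption that $\calX\cup\calY$ is pairwise distinct (needed to apply Theorem~\ref{3_thm_det_formula_for_Littlewood-Schur}) can be relaxed to the hypothesis stated in the theorem by polynomial continuation, as already done in Corollary~\ref{3_cor_littlewood}.
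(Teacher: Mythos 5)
Your proposal is correct and is essentially the paper's own argument run in the opposite direction: the paper writes each summand of the right-hand side as a determinant via Theorem~\ref{3_thm_det_formula_for_Littlewood-Schur} and recognizes the resulting sum as the Laplace expansion of the single determinant representing $LS_\lambda(-\calX;\calY)$, whereas you expand that determinant directly after reordering the last $n-k$ columns according to the overlap; the index checks, the exponent bookkeeping for the $y$-block, and your auxiliary lemma (which is exactly the paper's in-line simplification of the $\calS$-determinant, in the spirit of Corollary~\ref{3_cor_littlewood}) all coincide with the paper's steps, and the concluding polynomial-continuation argument is the same. The only point worth adding is the degenerate case $k<0$, which your expansion implicitly excludes since the determinantal formula then does not apply: as in the paper, one notes that both sides vanish there, the right-hand side because the $(m,n-l)$-index of $\nu\cup\lambda_{(n+1-k,\dots)}$ equals $k$ -- a fact you already establish.
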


\begin{proof} In a first step, suppose that the variables in $\calX \cup \calY$ are also pairwise distinct, making the determinantal formula for Littlewood-Schur functions applicable. The proof boils down to writing all Littlewood-Schur functions as determinants, and then using Laplace expansion to show that the left-hand side and the right-hand side in \eqref{3_thm_laplace_expansion_LS_cut_before_index_eq} are indeed equal.

Using the definitions of both overlap and index, we determine the relevant indices of the partitions that appear on the right-hand side in \eqref{3_thm_laplace_expansion_LS_cut_before_index_eq}: The $(m,l)$-index of the partition $\mu + \left\langle k^l \right\rangle$ is 0 since $\mu_l + k \geq \lambda_{n - k} + k \geq m - k + k = m$. Furthermore, the fact that
$$\nu_{n - l - k} \geq \lambda_{n - k} \geq m - k \:\text{ and }\: \left( \nu \cup \lambda_{(n + 1 - k, \dots)} \right)_{n - l - k  + 1} = \lambda_{n + 1 - k} \leq m - k$$
implies that the $(m, n - l)$-index of $\nu \cup \lambda_{(n + 1 - k, \dots)}$ is still $k$. In particular, both sides of the equation in \eqref{3_thm_laplace_expansion_LS_cut_before_index_eq} vanish whenever $k$ is negative. Otherwise, Theorem~\ref{3_thm_det_formula_for_Littlewood-Schur} states that the right-hand side in \eqref{3_thm_laplace_expansion_LS_cut_before_index_eq} equals
\begin{align}
\begin{split} 
\RHS ={} & \sum_{\makebox[53pt]{$\substack{\calS, \calT \subset \calX: \\ \calS \cup_{l, n - l} \calT \sorteq \calX}$}} \varepsilon_{l, n - k - l}(\mu, \nu)  \varepsilon\left(\mu + \left\langle k^l \right\rangle \right) \varepsilon \left(\nu \cup \lambda_{(n + 1 - k, n + 2 - k, \dots)} \right) \\
& \times \frac{\Delta(\calY; \calS) \Delta(\calY; \calT)}{\Delta(\calT; \calS) \Delta(\calS) \Delta(\calT) \Delta(\calY)^2}
\notag \end{split} \displaybreak[2] \\
\begin{split} \label{3_in_proof_first_overlap_id_first_det}
& \times \det \begin{pmatrix} \left( (s - y)^{-1} \right) \hspace{-5pt} & \left( s^{\mu_j + k + l - m - j} \right)_{1 \leq j \leq l} \\ \left( y^{\left( \mu + \left\langle k^l \right\rangle \right)'_i + m - l - i} \right)_{1 \leq i \leq m} \hspace{-5pt} & 0 \end{pmatrix}
\end{split} \\
\begin{split} \label{3_in_proof_first_overlap_id_second_det}
& \times \det \begin{pmatrix} \left( (t - y)^{-1} \right) \hspace{-5pt} & \left( t^{\nu_j + n - l - m - j} \right)_{1 \leq j \leq n - l - k} \\ \left( y^{\left( \nu \cup \lambda_{(n + 1 - k, \dots)} \right)'_i + m - n + l - i} \right)_{1 \leq i \leq m - k} \hspace{-5pt} & 0 \end{pmatrix}
.
\end{split}
\end{align}
Focusing on the bottom-left block of the matrix in line \eqref{3_in_proof_first_overlap_id_second_det}, we observe that for $1 \leq i \leq m - k$, the exponent of $y$ in the $i$-th row is 
\begin{align*}
\text{exponent}_i(y) \defeq{} & \left( \nu \cup \lambda_{(n + 1 - k, n + 2 - k, \dots)} \right)'_i + l + m - n - i \\
={} & \nu'_i + l + \left(\lambda_{(n + 1 - k, n + 2 - k, \dots)} \right)'_i + m - n - i
.
\intertext{As $(m - k, n - l - k) \in \nu$ and $\nu'_1 = l(\nu) \leq n - l - k$, we infer that $\nu'_i = n - l - k$ for $1 \leq i \leq m - k$:}
\text{exponent}_i(y) ={} & n - k + \left(\lambda_{(n + 1 - k, n + 2 - k, \dots)} \right)'_i + m - n - i
.
\intertext{Finally, the fact that $(m - k, n - k) \in \lambda$ allows us to simplify this expression to}
\text{exponent}_i(y) ={} & \lambda'_i + m - n - i
.
\end{align*} 
Focusing on the determinant in line \eqref{3_in_proof_first_overlap_id_first_det}, we first remark that for all $1 \leq i \leq m$, $\left( \mu + \left\langle k^l \right\rangle \right)'_i = l$ because $(m,l) \in \mu + \left\langle k^l \right\rangle$ and $\mu'_1 = l(\mu) \leq l$. Hence,
\begin{align*}
\text{determinant} \defeq{} & \det \begin{pmatrix} \left( (s - y)^{-1} \right) & \left( s^{\mu_j + k + l - m - j} \right)_{1 \leq j \leq l} \\ \left( y^{\left( \mu + \left\langle k^l \right\rangle \right)'_i + m - l - i} \right)_{1 \leq i \leq m} & 0 \end{pmatrix} \\
={} & \det \begin{pmatrix} \left( (s - y)^{-1} \right) & \left( s^{\mu_j + k + l - m - j} \right)_{1 \leq j \leq l} \\ \left( y^{m - i} \right)_{1 \leq i \leq m} & 0 \end{pmatrix}
.
\intertext{As the off-diagonal blocks are squares, expanding the determinant yields}
\text{determinant} ={} & (-1)^{ml} \det \left( s^{\mu_j + k + l - m - j} \right)_{1 \leq j \leq l} \det \left( y^{m - i} \right)_{1 \leq i \leq m} \\
={} & (-1)^{ml} \det \left( s^{\mu_j + k + l - m - j} \right)_{1 \leq j \leq l}  \Delta(\calY)
.
\end{align*}
In sum, the right-hand side in \eqref{3_thm_laplace_expansion_LS_cut_before_index_eq} is equal to the following simplified determinantal expression:
\begin{align}
\begin{split}
\RHS ={} & (-1)^{ml} \varepsilon_{l, n - k - l}(\mu, \nu) \varepsilon \left(\mu + \left\langle k^l \right\rangle \right) \varepsilon \left( \nu \cup \lambda_{(n + 1 - k, \dots)} \right) \frac{\Delta(\calY; \calX)}{\Delta(\calY)} \\
& \times \sum_{\substack{\calS, \calT \subset \calX: \\ \calS \cup_{l, n - l} \calT \sorteq \calX}} \frac{1}{\Delta(\calT; \calS) \Delta(\calS) \Delta(\calT)} \det \left( s^{\mu_j + k + l - m - j} \right)_{1 \leq j \leq l} \\
& \times \det \begin{pmatrix} \left( (t - y)^{-1} \right) & \left( t^{\nu_j + n - l - m - j} \right)_{1 \leq j \leq n - l - k} \\ \left( y^{\lambda'_i + m - n - i} \right)_{1 \leq i \leq m - k} & 0 \end{pmatrix} 
.
\notag \end{split}
\intertext{Notice that $\Delta(\calT; \calS) \Delta(\calS) \Delta(\calT)$ is equal to $\Delta(\calX)$ up to a sign that measures the number of inversions with respect to the order on $\calX$. Hence, Lemma~\ref{3_lem_laplace_expansion_matrix} allows us to view this sum over $\calS$ and $\calT$ as a Laplace expansion of one determinant:}
\begin{split} \label{3_in_proof_first_overlap_id_for_cor}
\RHS ={} & (-1)^{ml + l(n + m - l)} \varepsilon_{l, n - k - l}(\mu, \nu) \varepsilon\left(\mu + \left\langle k^l \right\rangle \right) \varepsilon \left( \nu \cup \lambda_{(n + 1 - k, \dots)} \right) \\
& \times \frac{\Delta(\calY; \calX)}{\Delta(\calX) \Delta(\calY)} \\
& \times \det \begin{pmatrix} \left( (x - y)^{-1} \right) & \hspace{-14pt} \left( x^{\mu_j + k + l - m - j} \right)_{\substack{1 \leq j, \\ j \leq l}} & \hspace{-5pt} \left( x^{\nu_j + n - l - m - j} \right)_{\substack{1 \leq j, \\ j \leq n - l - k}} \\ \left( y^{\lambda'_i + m - n - i} \right)_{\substack{1 \leq i, \\ i \leq m - k}} & \hspace{-14pt} 0 & \hspace{-5pt} 0 \end{pmatrix} \hspace{-3pt}
.
\end{split}
\intertext{The condition that $\lambda_{[n - k]} = \mu \star_{l, n - k - l} \nu$ entails that permuting the $n - k$ last columns results in}
\begin{split}
\RHS ={} & (-1)^{l(n - l)} \varepsilon\left(\mu + \left\langle k^l \right\rangle \right) \varepsilon \left( \nu \cup \lambda_{(n + 1 - k, \dots)} \right) \varepsilon(\lambda) LS_\lambda(-\calX; \calY)
\notag \end{split}
\end{align}
through another application of Theorem~\ref{3_thm_det_formula_for_Littlewood-Schur}. Under the additional assumption that the variables in $\calX \cup \calY$ are pairwise distinct, the equality in \eqref{3_thm_laplace_expansion_LS_cut_before_index_eq} is thus an immediate consequence of the fact that the signs cancel each other out. If we permit that $\calX \cup \calY$ contains repetitions, it suffices to remark that for a fixed set of variables $\calX$, both sides of the equation in \eqref{3_thm_laplace_expansion_LS_cut_before_index_eq} are polynomials in $\calY$.
\end{proof}

\begin{rem} \label{3_rem_vanderjeugt_counterexample} In case the sorting of the overlap is the identity, the equation in \eqref{3_thm_laplace_expansion_LS_cut_before_index_eq} reads
\begin{align} \label{3_vanderjeugt_counterexample}
LS_{\lambda} (-\calX; \calY) ={} & \sum_{\substack{\calS, \calT \subset \calX: \\ \calS \cup_{l, n - l} \calT \sorteq \calX}} \frac{LS_{\lambda_{[l]} + \left\langle (n - l)^l \right\rangle}(-\calS; \calY) LS_{\lambda_{(l + 1, l + 2, \dots)}}(- \calT; \calY)}{\Delta(\calT; \calS)}
\end{align}
for any integer $0 \leq l \leq \min\{n - k, n \}$.

This specialization of Theorem~\ref{3_thm_laplace_expansion_LS_cut_before_index} is a slight generalization of Proposition 8 in \cite{bump06}. In fact, they prove equality \eqref{3_vanderjeugt_counterexample} under the assumption that $\lambda_l \geq \lambda_{l + 1} + m$ (and $l \leq n$). Their assumption in stronger than ours: it entails that $l \leq n - k$ where $k$ stands for the $(m,n)$-index of $\lambda$. Indeed, if $k \geq 1$, then $\lambda_i \leq m - k < m$ for all $i > n - k$ but $\lambda_l \geq m$. Bump and Gamburd's proof is an induction over $m$ based on Pieri's formula.

Independently of \cite{bump06}, Lemma 5.4 in \cite{vanderjeugt} also states equality \eqref{3_vanderjeugt_counterexample} but without any assumptions on $\lambda$. Moreover, their proof is also an induction based on Pieri's formula. However, it is possible to construct counter-examples to their claim when $l > n - k$: let us fix $n = 2$ and $m = 3$, then $\lambda = (1,1,1)$ has $(m,n)$-index $k = 2$. Setting $l = 1$, one computes that
\begin{multline*}
LS_{(1,1,1)}(-x_1, -x_2; y_1, y_2, y_3) = \\ \sum_{\makebox[36.5pt]{$\substack{\calS, \calT \subset (x_1, x_2): \\ \calS \cup_{1,1} \calT \sorteq (x_1, x_2)}$}} \frac{LS_{(1) + (1)}(-\calS; y_1, y_2, y_3) LS_{(1,1)} (-\calT; y_1, y_2, y_3)}{\Delta(\calT; \calS)} + y_1y_2y_3
\end{multline*}
despite the fact that equation \eqref{3_vanderjeugt_counterexample} does not predict the additional term $y_1y_2y_3$. The main theorem in \cite{vanderjeugt} still holds because they provide several independent proofs.
\end{rem}

\begin{cor} \label{3_cor_laplace_expansion_LS_mu_has_maximal_index} Let $\mu, \nu$ be partitions and $\calX$, $\calY$ sets of variables with $n$ and $m$ elements, respectively, so that the elements of $\calX$ are pairwise distinct. Fix an integer $l(\mu) \leq l \leq n$ and let $k$ denote the $(m, n - l)$-index of $\nu$. If $l \leq n - k$ and the $(m,l)$-index of $\mu + \left\langle k^l \right\rangle$ is $0$, then
\begin{multline}
 \label{3_cor_laplace_expansion_LS_mu_has_maximal_index_eq}
LS_{\left( \mu \star_{l, n - l - k} \nu_{[n - l - k]} \right) \cup \nu_{(n + 1 - l - k, \dots)}} (-\calX; \calY) = \\
\sum_{\substack{\calS, \calT \subset \calX: \\ \calS \cup_{l, n - l} \calT \sorteq \calX}} \frac{\varepsilon \left(\mu, \nu_{[n - l - k]} \right) LS_{\mu + \left\langle k^l \right\rangle}(-\calS; \calY) LS_\nu(- \calT; \calY)}{\Delta(\calT; \calS)}
.
\end{multline} 
\end{cor}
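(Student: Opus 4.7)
The plan is to recognize the corollary as a direct application of Theorem~\ref{3_thm_laplace_expansion_LS_cut_before_index} to the partition
\[
\lambda \defeq \bigl( \mu \star_{l, n - l - k} \nu_{[n - l - k]} \bigr) \cup \nu_{(n + 1 - l - k, \dots)}
\]
appearing on the left-hand side of~\eqref{3_cor_laplace_expansion_LS_mu_has_maximal_index_eq}. Essentially all the work is bookkeeping: I need to show that the two index hypotheses are precisely what is required for Theorem~\ref{3_thm_laplace_expansion_LS_cut_before_index} to apply to $\lambda$, with the given $\mu$ playing the role of the theorem's $\mu$ and $\nu_{[n - l - k]}$ playing the role of the theorem's $\nu$.

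The first step is to unpack the index hypotheses. The assumption that $\mu + \langle k^l \rangle$ has $(m,l)$-index $0$ is equivalent to $\mu_l + k \geq m$, i.e.\ $\mu_l \geq m - k$. The assumption that $\nu$ has $(m, n - l)$-index $k$ gives both $\nu_{n - l - k} \geq m - k$ and $\nu_{n - l - k + 1} \leq m - k$. The second step is to verify that $\lambda$ is a bona fide partition whose $(m,n)$-index is $k$. If the overlap is the symbol $\infty$, then $LS_\infty = 0$ on the left and the corresponding columns in the determinantal form of the right-hand side (via Theorem~\ref{3_thm_det_formula_for_Littlewood-Schur}) coincide, so both sides vanish. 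Otherwise, the defining relation~\eqref{3_condition_overlap} implies that the smallest part of the overlap equals $\min(\mu_l, \nu_{n-l-k})$, which is $\geq m - k$ by the bounds just extracted. Hence $\lambda_{n - k} \geq m - k$ while $\lambda_{n - k + 1} = \nu_{n - l - k + 1} \leq m - k$, so $\lambda$ is weakly decreasing and has $(m,n)$-index exactly $k$.

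With these pieces in place, Theorem~\ref{3_thm_laplace_expansion_LS_cut_before_index} applies: the bound $0 \leq l \leq \min\{n - k, n\}$ follows from $l(\mu) \leq l \leq n$ together with the hypothesis $l \leq n - k$, and $\lambda_{[n-k]}$ is the $(l, n - k - l)$-overlap of $\mu$ and $\nu_{[n - l - k]}$ by the very construction of $\lambda$. The theorem then yields \eqref{3_cor_laplace_expansion_LS_mu_has_maximal_index_eq} once one substitutes the identification
\[
\nu_{[n - l - k]} \cup \lambda_{(n + 1 - k, \dots)} = \nu_{[n - l - k]} \cup \nu_{(n + 1 - l - k, \dots)} = \nu
\]
into the second Littlewood-Schur factor on the right-hand side of~\eqref{3_thm_laplace_expansion_LS_cut_before_index_eq}. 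The only non-trivial step is the index verification in the second paragraph; everything else is a direct translation, so there is no serious obstacle beyond careful index arithmetic.
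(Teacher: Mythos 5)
Your overall route is the paper's: when the overlap is a genuine partition you check that $\lambda = \bigl( \mu \star_{l, n - l - k} \nu_{[n - l - k]} \bigr) \cup \nu_{(n + 1 - l - k, \dots)}$ is a partition with $(m,n)$-index exactly $k$ and then invoke Theorem~\ref{3_thm_laplace_expansion_LS_cut_before_index} with $\nu_{[n-l-k]}$ in the role of the theorem's $\nu$. That part of your argument, including the translation of the two index hypotheses into $\mu_l \geq m - k$ and $\nu_{n-l-k} \geq m - k \geq \nu_{n-l-k+1}$, is correct and is what the paper does.

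The weak point is the case $\mu \star_{l, n - l - k} \nu_{[n - l - k]} = \infty$, which you dispatch with ``the corresponding columns in the determinantal form of the right-hand side (via Theorem~\ref{3_thm_det_formula_for_Littlewood-Schur}) coincide.'' Read term by term this is not an argument: applying Theorem~\ref{3_thm_det_formula_for_Littlewood-Schur} to a single summand of the right-hand side of \eqref{3_cor_laplace_expansion_LS_mu_has_maximal_index_eq} produces two \emph{separate} determinants, one in the variables $\calS$ (for $\mu + \langle k^l \rangle$) and one in the variables $\calT$ (for $\nu$), and the two columns with the coinciding exponents $\mu_p + k + l - m - p = \nu_q + n - l - m - q$ (coming from the failure of \eqref{3_condition_overlap}) sit in \emph{different} determinants, so no individual summand vanishes. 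The vanishing is a property of the sum as a whole: one must first re-run the Laplace reassembly from the proof of Theorem~\ref{3_thm_laplace_expansion_LS_cut_before_index} --- which uses only the two index facts, not the overlap condition, and is therefore still available here --- to rewrite the entire right-hand side as the single determinant in \eqref{3_in_proof_first_overlap_id_for_cor}; only then does the repeated exponent produce two identical columns and force the expression to vanish, matching $LS_\infty = 0$ on the left. This intermediate step needs to be stated explicitly (the paper devotes a paragraph to it), and you should also record the trivial sub-case $k < 0$ of the $\infty$ case, where the determinantal formula is unavailable but the right-hand side vanishes anyway because every factor $LS_\nu(-\calT; \calY)$ is zero.
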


\begin{proof} First suppose that there exists a partition $\lambda$ with the property that $$\lambda = \left( \mu \star_{l, n - l - k} \nu_{[n - l - k]} \right) \cup \nu_{(n + 1 - l - k, \dots)}.$$ It is easy to check that the $(m, n)$-index of $\lambda$ is $k$. Hence, the equality in \eqref{3_cor_laplace_expansion_LS_mu_has_maximal_index_eq} is a direct consequence of Theorem~\ref{3_thm_laplace_expansion_LS_cut_before_index}. 

Second suppose that $\mu \star_{l, n - l - k} \nu_{[n - l - k]} = \infty$. On the one hand, the left-hand side in \eqref{3_cor_laplace_expansion_LS_mu_has_maximal_index_eq} vanishes by definition. On the other hand, condition \eqref{3_condition_overlap} implies that there exist $1 \leq p \leq l$ and $1 \leq q \leq n - l - k$ such that $\mu_p + l - p = \alpha = \nu_q + n - l - k - q$. If $k$ is negative, the right-hand side in \eqref{3_cor_laplace_expansion_LS_mu_has_maximal_index_eq} also vanishes. If $k \geq 0$, the right-hand side in \eqref{3_cor_laplace_expansion_LS_mu_has_maximal_index_eq} is equal to the following determinantal expression, owing to the arguments used to justify that the right-hand side in \eqref{3_thm_laplace_expansion_LS_cut_before_index_eq} is equal to \eqref{3_in_proof_first_overlap_id_for_cor}:
\begin{align*}
\RHS ={} & \pm \frac{\Delta(\calY; \calX)}{\Delta(\calX) \Delta(\calY)} \\
& \times \det \begin{pmatrix} \left( (x - y)^{-1} \right) & \left( x^{\mu_j + k + l - m - j} \right)_{\substack{1 \leq j \leq l}} & \left( x^{\nu_j + n - l - m - j} \right)_{\substack{1 \leq j \leq n - l - k}} \\ \ast & 0 & 0\end{pmatrix}
\end{align*}
where $\ast$ stands for some block that is not relevant here. 
Given that 
\begin{align*}
\mu_p + k + l - m - p = \alpha + k - m = \nu_q + n - l - m - q 
,\end{align*}
we conclude that the matrix contains two identical columns, which means that the right-hand side in \eqref{3_cor_laplace_expansion_LS_mu_has_maximal_index_eq} also vanishes.
\end{proof}

\begin{thm} [second overlap identity] \label{3_thm_lapalace_expansion_new_LS} Let $0 \leq l \leq \min\{n - k, n\}$. Let $\calS$, $\calT$ and $\calY$ be sets containing $l$, $n - l$ and $m$ variables, respectively, so that $\Delta(\calY) \neq 0$ and $\Delta(\calS; \calT) \neq 0$. Suppose that $k$ is the $(m,n)$-index of a partition $\lambda$, then
\begin{align}
\begin{split} \label{3_thm_lapalace_expansion_new_LS_eq}
LS_{\lambda} (-(\calS \cup \calT); \calY) ={} & \sum_{p = 0}^{\min\{l,m\}} \sum_{\substack{\calU, \calV \subset \calY: \\ \calU \cup_{p, m - p} \calV \sorteq \calY}} \hspace{10pt} \sum_{\substack{\mu, \nu: \\ \mu \star_{l - p, n - k - l + p} \nu = \lambda_{[n - k]}}} \frac{\Delta(\calV; \calS) \Delta(\calT; \calU)}{\Delta(\calV; \calU) \Delta(\calT; \calS)} \\
& \times \varepsilon(\mu, \nu) LS_{\mu - \left\langle (m - k)^{l - p} \right\rangle} (-\calS; \calU) LS_{\nu \cup \lambda_{(n + 1 - k, \dots)}} (-\calT; \calV)
.
\end{split}
\end{align}
\end{thm}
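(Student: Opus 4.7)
The plan is to apply the determinantal formula (Theorem \ref{3_thm_det_formula_for_Littlewood-Schur}) to the left-hand side and then perform a Laplace expansion (Lemma \ref{3_lem_laplace_expansion_matrix}) in which the sum over column subsets simultaneously produces the $\calU/\calV$ splitting of $\calY$ and the $\mu/\nu$ overlap decomposition of $\lambda_{[n-k]}$. I would first assume that the elements of $\calS \cup \calT \cup \calY$ are pairwise distinct, since for fixed $\calS, \calT$ both sides of \eqref{3_thm_lapalace_expansion_new_LS_eq} are polynomials in $\calY$, so the general case follows by polynomial identity.

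By Theorem \ref{3_thm_det_formula_for_Littlewood-Schur} applied with $\calX = \calS \cup \calT$,
\begin{align*}
LS_\lambda(-(\calS\cup\calT); \calY) = \varepsilon(\lambda) \frac{\Delta(\calY; \calS \cup \calT)}{\Delta(\calS \cup \calT) \Delta(\calY)} \det M,
\end{align*}
where $M$ is the $(n+m-k) \times (n+m-k)$ block matrix from Theorem \ref{3_thm_det_formula_for_Littlewood-Schur}. I then apply Lemma \ref{3_lem_laplace_expansion_matrix}(i) with $K = \calS$ viewed as a subset of the rows (of size $l$):
\begin{align*}
\det M = \sum_{\substack{J \subset \calY \cup [n-k]:\\ l(J) = l}} \varepsilon(\sigma(\calS, J)) \det(M_{\calS, J}) \det(M_{\bar\calS, \bar J}).
\end{align*}
For each $J$ I write $\calU = J \cap \calY$, $\calV = \calY \setminus \calU$, $p = l(\calU)$, $J_1 = J \cap [n-k]$ and $J_2 = [n-k] \setminus J_1$; the double sum over $(\calU, J_1)$ naturally reorganises as the triple sum over $p$, $(\calU, \calV)$ and $J_1$ appearing on the right-hand side of \eqref{3_thm_lapalace_expansion_new_LS_eq}. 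The small block $M_{\calS, J}$ is the $l \times l$ matrix $\bigl((s-u)^{-1} \mid (s^{\lambda_j + n - m - j})_{j \in J_1}\bigr)$, while the big block is
\begin{align*}
M_{\bar\calS, \bar J} = \begin{pmatrix}(t - v)^{-1} & (t^{\lambda_j + n - m - j})_{j \in J_2} \\ (v^{\lambda'_i + m - n - i})_{1 \leq i \leq m-k} & 0\end{pmatrix}.
\end{align*}

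The central step is to recognise each sub-determinant via Theorem \ref{3_thm_det_formula_for_Littlewood-Schur} run in reverse. Shifting each of the $l-p$ exponents of $s$ in the small block by $m - k$ shows that, if I define a partition $\mu$ by declaring $\mu + \rho_{l-p}$ to be the $J_1$-subsequence of $\lambda_{[n-k]} + \rho_{n-k}$ (read in decreasing order), then $\mu - \langle (m-k)^{l-p}\rangle$ has $(p,l)$-index equal to $p$ and the small block is exactly the determinant that appears in the formula for $LS_{\mu - \langle (m-k)^{l-p}\rangle}(-\calS; \calU)$. An entirely analogous computation, using that $\lambda'_i \geq n - k$ for $i \leq m-k$ (so that the tail $\lambda_{(n+1-k,\dots)}$ correctly supplies the extra conjugate parts), identifies the big block with $LS_{\nu \cup \lambda_{(n+1-k,\dots)}}(-\calT; \calV)$, where $\nu$ is built from $J_2$ in the same way and $\nu \cup \lambda_{(n+1-k,\dots)}$ has $(m-p, n-l)$-index equal to $k - p$. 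Because the pair $(J_1, J_2)$ ranges over all ordered splittings of $\lambda_{[n-k]} + \rho_{n-k}$ of the prescribed sizes, the resulting sum over $J_1$ (with $\calU$, $\calV$, and $p$ fixed) is exactly the sum over pairs $(\mu, \nu)$ with $\mu \star_{l-p, n-k-l+p} \nu = \lambda_{[n-k]}$.

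The last step is the sign and prefactor bookkeeping, which I expect to be the main obstacle. The Vandermonde $\Delta(\calS \cup \calT) \Delta(\calY)$ factors, up to sign, as $\Delta(\calS)\Delta(\calT)\Delta(\calT;\calS) \cdot \Delta(\calU)\Delta(\calV)\Delta(\calV;\calU)$, while $\Delta(\calY; \calS \cup \calT)$ factors as $\Delta(\calU;\calS)\Delta(\calU;\calT)\Delta(\calV;\calS)\Delta(\calV;\calT)$. Cancelling $\Delta(\calU;\calS)\Delta(\calU)\Delta(\calS)$ against the outer factors of the reversed Theorem \ref{3_thm_det_formula_for_Littlewood-Schur} applied to the small block, and cancelling $\Delta(\calV;\calT)\Delta(\calV)\Delta(\calT)$ against the outer factors for the big block, leaves precisely the coefficient $\Delta(\calV;\calS)\Delta(\calT;\calU)/[\Delta(\calV;\calU)\Delta(\calT;\calS)]$ in \eqref{3_thm_lapalace_expansion_new_LS_eq}. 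For the signs, the definition of the overlap forces $\varepsilon(\sigma(\calS, J)) \cdot \varepsilon(\mu-\langle(m-k)^{l-p}\rangle) \cdot \varepsilon(\nu \cup \lambda_{(n+1-k,\dots)}) \cdot \varepsilon(\lambda)$ to reduce to $\varepsilon(\mu, \nu)$, by the same sorting argument that was used to reach the displayed expression for the right-hand side in the proof of Theorem \ref{3_thm_laplace_expansion_LS_cut_before_index}. Assembling these pieces yields \eqref{3_thm_lapalace_expansion_new_LS_eq}, and the restriction on the variables is removed by polynomial continuation in $\calY$.
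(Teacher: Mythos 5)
Your core argument is the paper's own: apply Theorem~\ref{3_thm_det_formula_for_Littlewood-Schur} with $\calX = \calS \cup \calT$, Laplace-expand along the $l$ rows indexed by $\calS$, split each column set $J$ into its $\calY$-part $\calU$ and its $[n-k]$-part $J_1$, and recognize the two complementary minors as the determinants for $LS_{\mu - \langle (m-k)^{l-p}\rangle}(-\calS;\calU)$ and $LS_{\nu \cup \lambda_{(n+1-k,\dots)}}(-\calT;\calV)$, using exactly the index and conjugate-part computations the paper makes ($(p,l)$-index of $\mu - \langle (m-k)^{l-p}\rangle$ equal to $p$, and $\lambda'_i = (\nu\cup\lambda_{(n+1-k,\dots)})'_i + l - p$ for $i \leq m-k$); the prefactor and sign bookkeeping you sketch matches the paper's. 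However, there are two genuine loose ends relative to the statement as given. The more important one is the final continuation step: the hypotheses only require $\Delta(\calY)\neq 0$ and $\Delta(\calS;\calT)\neq 0$, so repetitions \emph{inside} $\calS$ or \emph{inside} $\calT$ are allowed, and for such $\calS,\calT$ your base case (all of $\calS\cup\calT\cup\calY$ pairwise distinct) is never available, so continuation in $\calY$ for fixed $\calS,\calT$ cannot reach these configurations. (Moreover the right-hand side is not literally a polynomial in $\calY$ because of the denominators $\Delta(\calV;\calU)$; you would need to multiply by $\Delta(\calY)$ or argue with rational functions regular on $\Delta(\calY)\neq 0$.) The paper continues in the other direction: it fixes $\calY$ with distinct entries and notes that $\Delta(\calT;\calS)\times\LHS$ and $\Delta(\calT;\calS)\times\RHS$ are polynomials in $\calS\cup\calT$, which does extend the identity to all $\calS,\calT$ with $\Delta(\calS;\calT)\neq 0$.

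The second gap is the case of negative index $k$, which the hypotheses permit (when $k<0$ the condition $0\leq l\leq\min\{n-k,n\}$ is just $l\leq n$). There Theorem~\ref{3_thm_det_formula_for_Littlewood-Schur} supplies no determinant for the left-hand side -- it only asserts $LS_\lambda(-(\calS\cup\calT);\calY)=0$ -- so your expansion cannot start; one needs the separate observation, made at the outset of the paper's proof, that the $(m-p,n-l)$-index of $\nu\cup\lambda_{(n+1-k,\dots)}$ equals $k-p<0$, so every term on the right-hand side vanishes as well. Both defects are repairable without new ideas, but as written your proposal establishes the identity only for $k\geq 0$ and for $\calS\cup\calT$ consisting of pairwise distinct variables.
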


It is worth noting that for any partition $\nu$ that appears in the sum the union $\nu \cup \lambda_{(n + 1 - k, \dots)}$ is again a partition. Indeed, the fact that the $(l - p, n - k - l + p)$-overlap of $\mu$ and $\nu$ is $\lambda_{[n - k]}$ implies that there exists an index $1 \leq i \leq n - k$ so that $\nu_{n - k - l + p} = \lambda_i + n - k - i \geq \lambda_{n - k}.$

\begin{proof} We remark that if $\mu \star_{l - p, n - k - l + p} \nu = \lambda_{[n - k]}$, then the $(m - p, n - l)$-index of the partition $\nu \cup \lambda_{(n + 1 - k, \dots)}$ is $k - p$. Indeed, recalling that $k$ is the $(m,n)$-index of $\lambda$ allows us to infer that
\begin{align*} \left( \nu \cup \lambda_{(n + 1 - k, \dots)} \right)_{n - l - (k - p)} ={} & \nu_{n - k - l + p} \geq \lambda_{n - k} \geq m - k \intertext{ and } \left( \nu \cup \lambda_{(n + 1 - k, \dots)} \right)_{n - l - (k - p) + 1} ={} & \lambda_{n - k + 1} \leq m - k
.
\end{align*}
Therefore, both sides of the equation in \eqref{3_thm_lapalace_expansion_new_LS_eq} vanish whenever $k$ is negative. To prove equality for non-negative $k$, we first suppose that the variables in $\calS \cup \calT \cup \calY$ are pairwise distinct. According to Theorem~\ref{3_thm_det_formula_for_Littlewood-Schur}, the left-hand side in \eqref{3_thm_lapalace_expansion_new_LS_eq} can be written as
\begin{align*}
\LHS ={} & \varepsilon(\lambda) \frac{\Delta(\calY; \calS \cup \calT)}{\Delta(\calS \cup \calT) \Delta(\calY)} \det \begin{pmatrix} \left( (s - y)^{-1} \right) & \left( s^{\lambda_j + n - m - j} \right)_{\substack{1 \leq j \leq n - k}} \\ \left( (t - y)^{-1} \right) & \left( t^{\lambda_j + n - m - j} \right)_{\substack{1 \leq j \leq n - k}} \\ \left( y^{\lambda'_i + m - n - i} \right)_{\substack{1 \leq i \leq m - k}} & 0\end{pmatrix}
.
\intertext{Let us expand the determinant along the first $l$ rows by applying Lemma~\ref{3_lem_laplace_expansion_matrix}. This results in a signed sum over all subsets of the $m$ first columns and the $n - k$ last columns, respectively, that contain exactly $l$ columns in total.  The sum over the $m$ first columns corresponds to dividing $\calY$ into two subsequences, while the sum over the $n - k$ last columns (essentially) corresponds to dividing $\lambda$ into two subpartitions:}
\LHS ={} & \varepsilon(\lambda) \frac{\Delta(\calY; \calS) \Delta(\calY; \calT)}{\Delta(\calS \cup \calT) \Delta(\calY)} \\
& \times \sum_{p = 0}^{\min\{l,m\}} \sum_{\substack{\calU, \calV \subset \calY: \\ \calU \cup_{p, m - p} \calV \sorteq \calY}} \sum_{\substack{\mu, \nu: \\ \mu \star_{l - p, n - k - l + p} \nu = \lambda_{[n - k]}}} \frac{\varepsilon(\mu, \nu) \Delta(\calY) (-1)^{(m - p)(l - p)}}{\Delta(\calU) \Delta(\calV) \Delta(\calU; \calV)} \\
& \times \det \begin{pmatrix} \left( (s - u)^{-1} \right) & \left( s^{(\mu_j - (m - k)) + l - p - j} \right)_{\substack{1 \leq j \leq l - p}} \end{pmatrix} \\
& \times \det \begin{pmatrix} \left( (t - v)^{-1} \right) & \left( t^{\nu_j + (n - l) - (m - p) - j} \right)_{\substack{1 \leq j \leq n - k - l + p}} \\ \left( v^{\lambda'_i + m - n - i} \right)_{\substack{1 \leq i \leq m - k}} & 0 \end{pmatrix}
.
\intertext{Notice that $\mu - \left\langle (m - k)^{l - p} \right\rangle$ is a partition since $\mu_{l - p} - (m - k) \geq \lambda_{n - k} - (m - k) \geq 0$. Moreover, the same inequality shows that the $(p,l)$-index of this partition is $p$. In addition, the fact that $(m - k, n - k) \in \lambda$ and $(m - k, n - l - k + p) \in \nu$ entails that for $1 \leq i \leq m - k$, $\lambda'_i = \left( \nu \cup \lambda_{(n - k + 1, \dots)} \right)'_i + l - p$. Hence, Theorem~\ref{3_thm_det_formula_for_Littlewood-Schur} states that}
\LHS ={} & \varepsilon(\lambda) \frac{\Delta(\calY; \calS) \Delta(\calY; \calT)}{\Delta(\calS \cup \calT)} \\
& \times \sum_{p = 0}^{\min\{l,m\}} \sum_{\substack{\calU, \calV \subset \calY: \\ \calU \cup_{p, m - p} \calV \sorteq \calY}} \sum_{\substack{\mu, \nu: \\ \mu \star_{l - p, n - k - l + p} \nu = \lambda_{[n - k]}}} \frac{\varepsilon(\mu, \nu) (-1)^{(m - p)(l - p)}}{\Delta(\calU) \Delta(\calV) \Delta(\calU; \calV)} \displaybreak[2] \\
& \times \varepsilon\left(\mu - \left\langle (m - k)^{l - p} \right\rangle \right) \frac{\Delta(\calS) \Delta(\calU)}{\Delta(\calU; \calS)} LS_{\mu - \left\langle (m - k)^{l - p} \right\rangle}(-\calS; \calU) \\
& \times \varepsilon\left(\nu \cup \lambda_{(n - k + 1, \dots)} \right) \frac{\Delta(\calT) \Delta(\calV)}{\Delta(\calV; \calT)} LS_{\nu \cup \lambda_{(n - k + 1, \dots)}}(-\calT; \calV)
.
\end{align*}
Combining the different factors in front of the Littlewood-Schur functions gives the desired equality. If we weaken the assumption that $\calS \cup \calT \cup \calY$ consist of pairwise distinct elements to the condition that $\Delta(\calY) \neq 0$ and $\Delta(\calS; \calT) \neq 0$, the equality in \eqref{3_thm_lapalace_expansion_new_LS_eq} still holds as $\Delta(\calT; \calS) \times \LHS$ and $\Delta(\calT; \calS) \times \RHS$ are polynomials in $\calS \cup \calT$ for any fixed set of variables $\calY$.
\end{proof}

\subsection{Laplace expansion for Schur functions}
Given that $\schur_\lambda(\calX) = LS_\lambda(\calX; \emptyset)$, any Schur function may be viewed as a specialization of a Littlewood-Schur function. The first and second overlap identities look much neater when specialized to Schur functions. The primary reason why these statements simplify so drastically is that the $(0,n)$-index of any partition with length less than $n$ is equal to $0$. Specializing Corollary~\ref{3_cor_laplace_expansion_LS_mu_has_maximal_index} to Schur functions gives the following identity. 

\begin{cor} [first overlap identity for Schur functions, \cite{dehaye12}] \label{3_cor_laplace_expansion_POD} Let the set $\calX$ consist of $m + n$ pairwise distinct variables. For any pair of partitions $\mu$ and $\nu$ of lengths at most $m$ and $n$, respectively, it holds that
\begin{align*}
\schur_{\mu \star_{m,n} \nu} (\calX) = \sum_{\substack{\calS, \calT \subset \calX: \\ \calS \cup_{m,n} \calT \sorteq \calX}} \frac{\varepsilon(\mu, \nu) \schur_\mu(\calS) \schur_\nu(\calT)}{\Delta(\calS; \calT)}
.
\end{align*}
\end{cor}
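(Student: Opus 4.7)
The plan is to deduce this identity by specializing Corollary~\ref{3_cor_laplace_expansion_LS_mu_has_maximal_index} to $\calY = \emptyset$ and then translating from Littlewood-Schur functions back to Schur functions via homogeneity.

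In the notation of Corollary~\ref{3_cor_laplace_expansion_LS_mu_has_maximal_index}, I would take its ambient set of variables to be the $(m+n)$-element set $\calX$ of the present statement (so that its parameter $n$ becomes $m+n$), set $\calY = \emptyset$ (so that its parameter $m$ becomes $0$), and choose the free parameter $l = m$. Under these choices the hypothesis $l(\mu) \leq l$ reads $l(\mu) \leq m$; the $(0,n)$-index of $\nu$ is $k = 0$ because $l(\nu) \leq n$; and the $(0,m)$-index of $\mu + \langle 0^m \rangle = \mu$ is $0$ for the same reason. The compatibility condition $l \leq (m+n) - k$ holds trivially. With $k = 0$ the tail $\nu_{(n-l-k+1,\dots)}$ is empty and the rectangular shift $\langle k^l \rangle$ is trivial, so Corollary~\ref{3_cor_laplace_expansion_LS_mu_has_maximal_index} collapses to
\begin{align*}
LS_{\mu \star_{m,n} \nu}(-\calX; \emptyset) = \sum_{\substack{\calS, \calT \subset \calX: \\ \calS \cup_{m,n} \calT \sorteq \calX}} \frac{\varepsilon(\mu, \nu)\, LS_\mu(-\calS; \emptyset)\, LS_\nu(-\calT; \emptyset)}{\Delta(\calT; \calS)}.
\end{align*}

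Next, invoking the identity $LS_\lambda(-\calW; \emptyset) = \schur_\lambda(-\calW)$ noted after Theorem~\ref{3_thm_det_formula_for_Littlewood-Schur}, both sides become Schur functions evaluated at negated arguments. To recover the stated form, I would remove all minus signs using the homogeneity relation $\schur_\lambda(-\calW) = (-1)^{|\lambda|} \schur_\lambda(\calW)$. A short computation with the defining relation $\mu \star_{m,n} \nu + \rho_{m+n} \sorteq (\mu + \rho_m) \cup (\nu + \rho_n)$, comparing total sizes of both sides and using $|\rho_k| = k(k-1)/2$, yields $|\mu \star_{m,n} \nu| = |\mu| + |\nu| - mn$. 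Consequently the left-hand side acquires a sign $(-1)^{|\mu|+|\nu|+mn}$ and the numerator of the right-hand side a sign $(-1)^{|\mu|+|\nu|}$; additionally, rewriting $\Delta(\calT; \calS)$ as $\Delta(\calS; \calT)$ contributes a further factor $(-1)^{mn}$. All three sign contributions cancel, leaving the claimed identity.

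The main obstacle, if one can call it that, is just the sign bookkeeping; the sign $\varepsilon(\mu,\nu)$ itself depends only on the partitions and is unaffected by these manipulations, so once the size identity $|\mu \star_{m,n} \nu| = |\mu| + |\nu| - mn$ is recorded, the verification is entirely mechanical.
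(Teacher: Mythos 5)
Your proposal is correct and matches the paper's route: the paper obtains this corollary precisely by specializing Corollary~\ref{3_cor_laplace_expansion_LS_mu_has_maximal_index} to $\calY = \emptyset$ (using that the $(0,n)$-index of a partition of length at most $n$ is $0$), and your sign bookkeeping via $|\mu \star_{m,n} \nu| = |\mu| + |\nu| - mn$, homogeneity, and $\Delta(\calT;\calS) = (-1)^{mn}\Delta(\calS;\calT)$ is accurate — you have simply written out the details the paper leaves implicit.
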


Corollary~\ref{3_cor_laplace_expansion_POD} is nothing more than a reformulation of Lemma 5 in \cite{dehaye12}. The only notable difference is that Dehaye does not introduce the notion of overlapping two partitions. In fact, Dehaye's lemma was the starting point for this paper. Interestingly, the case $\mu_{m} \geq \nu_1 + n$ (i.e. when the sorting algorithm is the identity) appears independently in both \cite{bump06} and \cite{vanderjeugt} with essentially identical proofs.

The following corollary to Theorem~\ref{3_thm_lapalace_expansion_new_LS} is obtained by setting $\calY = \emptyset$.

\begin{cor} [second overlap identity for Schur functions] \label{3_cor_laplace_expansion_schur_new} Let $\lambda$ be a partition and let $\calS$ and $\calT$ be sets consisting of $m$ and $n$ variables, respectively. If $\Delta(\calS; \calT) \neq 0$, then
\begin{align*}
\schur_\lambda(\calS \cup \calT) ={} & \sum_{\substack{\mu, \nu: \\ \mu \star_{m,n} \nu = \lambda}} \frac{\varepsilon(\mu, \nu) \schur_\mu(\calS) \schur_\nu(\calT)}{\Delta(\calS; \calT)}
.
\end{align*}
\end{cor}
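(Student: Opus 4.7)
The plan is to deduce this corollary by specializing the second overlap identity for Littlewood-Schur functions, Theorem~\ref{3_thm_lapalace_expansion_new_LS}, to $\calY = \emptyset$. Since $LS_\lambda(-\calX; \emptyset) = \schur_\lambda(-\calX)$, the Littlewood-Schur identity collapses to a statement about ordinary Schur functions evaluated at minus the variables, and the result then follows by a short sign computation.

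Concretely, I would begin by reconciling notation: take the Theorem's parameter $m$ (the size of $\calY$) to be $0$, and rename its $l$ and $n - l$ as the corollary's $m$ and $n$. The first thing to verify is that the $(0, m+n)$-index $k$ of any partition $\lambda$ with $l(\lambda) \leq m+n$ equals $0$, since $(1, m+n+1)\notin\lambda$. If instead $l(\lambda)>m+n$, then $k<0$, in which case Theorem~\ref{3_thm_lapalace_expansion_new_LS} already forces both sides to vanish, matching both $\schur_\lambda(\calS\cup\calT)=0$ and the absence of any overlap $\mu\star_{m,n}\nu=\lambda$ with $l(\mu)\leq m$, $l(\nu)\leq n$. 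So we may assume $k=0$.

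With $k=0$ and $\calY=\emptyset$ the outer sums of Theorem~\ref{3_thm_lapalace_expansion_new_LS} essentially disappear: the only subsets of $\emptyset$ force $\calU=\calV=\emptyset$ and $p=0$; every factor of the form $\Delta(\calV;\calS)$, $\Delta(\calT;\calU)$ or $\Delta(\calV;\calU)$ reduces to $1$; the rectangular shift $\langle(m-k)^{l-p}\rangle$ becomes the empty rectangle; and the tail $\lambda_{(n+1-k,\dots)}$ contributes nothing, since the index condition $(m-k,n-k)=(0,m+n)\in\lambda$ would already force $l(\lambda)>m+n$, contradicting $k=0$. Finally each surviving Littlewood-Schur function collapses via $LS_\kappa(-\calZ;\emptyset) = \schur_\kappa(-\calZ)$, leaving
$$\schur_\lambda(-(\calS\cup\calT)) = \sum_{\mu\star_{m,n}\nu=\lambda} \frac{\varepsilon(\mu,\nu)\,\schur_\mu(-\calS)\,\schur_\nu(-\calT)}{\Delta(\calT;\calS)}.$$

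All that remains is sign bookkeeping, which I expect to be the only place that requires any care. Homogeneity gives $\schur_\kappa(-\calZ)=(-1)^{|\kappa|}\schur_\kappa(\calZ)$, and clearly $\Delta(\calT;\calS) = (-1)^{mn}\Delta(\calS;\calT)$. Summing the defining relation $\mu \star_{m,n}\nu + \rho_{m+n} \sorteq (\mu+\rho_m) \cup (\nu+\rho_n)$ yields the size identity $|\mu|+|\nu|=|\lambda|+mn$, so the net sign on the right-hand side is $(-1)^{|\mu|+|\nu|-mn}=(-1)^{|\lambda|}$, which exactly matches the $(-1)^{|\lambda|}$ coming from the left. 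Cancelling this common factor gives precisely the claimed identity.
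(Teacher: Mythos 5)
Your proposal is correct and takes exactly the paper's route: the paper obtains this corollary precisely by setting $\calY = \emptyset$ in Theorem~\ref{3_thm_lapalace_expansion_new_LS}, with the index check and the sign bookkeeping you carry out (homogeneity, $\Delta(\calT;\calS)=(-1)^{mn}\Delta(\calS;\calT)$, and $|\mu|+|\nu|=|\lambda|+mn$) left implicit. One small wording slip: your justification that the tail $\lambda_{(n+1-k,\dots)}$ contributes nothing is garbled---by the paper's convention $(0,m+n)\in\lambda$ holds trivially---but the needed fact is immediate, since $k=0$ means exactly that $\lambda_{m+n+1}=0$.
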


\section{Visualizing the overlap of two partitions} \label{3_section_visualizations_of_overlap}
In this section we present two visual interpretations for overlapping partitions. Both visualizations characterize the set of all pairs of partitions whose overlaps are equal by identifying their Ferrers diagrams with some part of the diagram of a so-called staircase walk.

\begin{defn} [staircase walks]  A staircase walk is a lattice walk that only uses west and south steps. Let \label{symbol_staircase_walks_in_rectangle} $\mathfrak{P}(n,m)$ be the set of all staircase walks going from the top-right to the bottom-left corner of an $n \times m$ rectangle. For a staircase walk $\pi \in \mathfrak{P}(n,m)$, \label{symbol_partition_associated_to_pi_mu} $\mu(\pi) \subset \langle n^m \rangle$ denotes the partition whose Ferrers diagram lies above $\pi$, while \label{symbol_partition_associated_to_pi_nu} $\nu(\pi) \subset \langle n^m \rangle$ denotes the partition whose Ferrers diagram (rotated by 180 degrees) lies below $\pi$. In addition, $V(\pi)$ and \label{symbol_sequence_associated_to_pi} $H(\pi)$ denote the sequences of all times of vertical and horizontal steps of $\pi$, respectively.
\end{defn}

\begin{ex*} The staircase walk $\pi \in \mathfrak{P}(6,3)$ cuts the $6 \times 3$ rectangle into the following two partitions: $\mu(\pi) = (5,5,2)$ and $\nu(\pi) = (4,1,1)$.
\begin{center}
\begin{tikzpicture} 
\node(pi) at (-0.5, 0.75) {$\pi =$};
\draw[step=0.5cm, thin] (0, 0) grid (3, 1.5);
\draw[ultra thick, ->] (1, 0) -- (0, 0);
\draw[ultra thick] (1, 0) -- (1, 0.5);
\draw[ultra thick] (1, 0.5) -- (2.5, 0.5);
\draw[ultra thick] (2.5, 0.5) -- (2.5, 1.5);
\draw[ultra thick] (2.5, 1.5) -- (3, 1.5);
\end{tikzpicture}
\begin{tikzpicture} \label{3_ferrers_diagram}
\node(mu) at (-0.7, 0.75) {$\mu(\pi) =$};
\draw[step=0.5cm, thin] (0, 0) grid (1, 0.5);
\draw[step=0.5cm, thin] (0, 0.5) grid (2.5, 1.5);
\end{tikzpicture}
\begin{tikzpicture}
\node(nu) at (-0.7, 0.75) {$\nu(\pi) =$};
\draw[step=0.5cm, thin] (0, 0) grid (0.5, 1.5);
\draw[step=0.5cm, thin] (0, 0.99) grid (2, 1.5);
\end{tikzpicture}
\end{center}
We further see that $V(\pi) = (2,3,7)$ and $H(\pi) = (1,4,5,6,8,9)$.
\end{ex*}

\subsection{Labeled staircase walks}
Our first visualization for the overlap of two partitions is based on the following lemma, which provides a non-visual description for the two partitions $\mu(\pi)$ and $\nu(\pi)$ associated to any staircase walk $\pi$.  

\begin{lem} \label{3_lem_macdonald_page_3} Let $\pi \in \mathfrak{P}(n,m)$, then $\mu(\pi)$ and $\nu(\pi)$ are the unique partitions that satisfy the following two equations:
$$\mu(\pi) + \rho_m = (\rho_{m + n})_{V(\pi)} \:\text{ and }\: \nu(\pi)' + \rho_n = (\rho_{m + n})_{H(\pi)}.$$
\end{lem}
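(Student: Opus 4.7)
The plan is to verify both equations pointwise by computing $\mu(\pi)_i$ and $\nu(\pi)'_j$ directly from the combinatorics of $\pi$. Uniqueness is automatic: for any partition $\mu$ of length at most $m$, the sequence $\mu + \rho_m$ is strictly decreasing (since $\mu_i \geq \mu_{i+1}$ forces $\mu_i + (m-i) > \mu_{i+1} + (m-i-1)$), so $\mu$ is recovered from $\mu + \rho_m$ by subtracting $\rho_m$; the same argument recovers $\nu'$, and hence $\nu$, from $\nu' + \rho_n$.

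For the first equation, I label the vertical steps of $\pi$ from top to bottom by $i = 1, \dots, m$, so that the $i$-th such step occurs at time $v_i \defeq V(\pi)_i$. Immediately before this step, the walker has taken $i - 1$ vertical steps and $v_i - i$ horizontal steps, so it sits at $x$-coordinate $n - (v_i - i)$ along the top edge of row $i$ (counting from the top). Row $i$ of $\mu(\pi)$ consists of exactly those boxes in the corresponding horizontal strip that lie to the left of this step, giving $\mu(\pi)_i = n - v_i + i$. Since $(\rho_{m+n})_j = m + n - j$, this yields
\[
\mu(\pi)_i + (m - i) = m + n - v_i = (\rho_{m+n})_{v_i},
\]
which is the first asserted equation.

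For the second equation, I analyze the horizontal steps analogously. Label them by $j = 1, \dots, n$; the $j$-th horizontal step occurs at time $h_j \defeq H(\pi)_j$. At this moment the walker has taken $j - 1$ horizontal and $h_j - j$ vertical steps, so this step runs from column $n + 1 - j$ to column $n - j$ at height $m - (h_j - j) = m + j - h_j$. Hence the region below $\pi$ in column $n + 1 - j$ has height exactly $m + j - h_j$. The Ferrers diagram of $\nu(\pi)$ is this region rotated by $180^\circ$, so column $n + 1 - j$ of the unrotated region corresponds to column $j$ of $\nu(\pi)$, and $\nu(\pi)'_j$ — the height of column $j$ of the diagram of $\nu(\pi)$ — equals $m + j - h_j$. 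Consequently
\[
\nu(\pi)'_j + (n - j) = m + n - h_j = (\rho_{m+n})_{h_j},
\]
which is the second asserted equation.

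The only subtle point is keeping the $180^\circ$ rotation in the definition of $\nu(\pi)$ straight, so that column $j$ of the rotated diagram is matched with column $n + 1 - j$ of the region below $\pi$; apart from this the entire proof reduces to elementary counting of how many vertical and horizontal steps have occurred by a given time.
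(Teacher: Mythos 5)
Your proof is correct and follows essentially the same route as the paper: both verify the identities pointwise by counting how many horizontal and vertical steps have occurred by the time of the $i$-th vertical (resp.\ $j$-th horizontal) step, which gives $\mu(\pi)_i = n - V(\pi)_i + i$ and $\nu(\pi)'_j = m - H(\pi)_j + j$; the paper simply argues the first of these from a diagram and invokes symmetry for the second. Your explicit treatment of the $180^\circ$ rotation and of uniqueness (via strict decrease of $\mu + \rho_m$) only spells out details the paper leaves implicit.
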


\begin{proof} This proof reproduces arguments from \cite[p.~3]{mac}. Consider the following diagram of a staircase walk $\pi \in \mathfrak{P}(n,m)$ and the corresponding partition $\mu(\pi)$ (colored in gray):
\begin{center}
\begin{tikzpicture} 
\fill[black!10!white] (0, 0) rectangle (1, 1.5);
\fill[black!10!white] (1, 0.5) rectangle (2.5, 1.5);
\draw[step=0.5cm, thin] (0, 0) grid (3, 1.5);
\draw[ultra thick, ->] (1, 0) -- (0, 0);
\draw[ultra thick] (1, 0) -- (1, 0.5);
\draw[ultra thick] (1, 0.5) -- (2.5, 0.5);
\draw[ultra thick] (2.5, 0.5) -- (2.5, 1.5);
\draw[ultra thick] (2.5, 1.5) -- (3, 1.5);
\draw[decoration={brace, raise=5pt},decorate] (3, 1.5) -- node[right=6pt] {$m$} (3, 0);
\draw[decoration={brace, raise=5pt, mirror},decorate] (0, 0) -- node[below=6pt] {$n$} (3, 0);
\node at (0, 1.7) {};
\end{tikzpicture}
\end{center}
We see that for $1 \leq i \leq m$, $V(\pi)_i = i + n - (\mu(\pi))_i$. Let us illustrate this observation for $i = 2$:
\begin{center}
\begin{tikzpicture} 
\fill[black!10!white] (0, 0) rectangle (1, 1.5);
\fill[black!10!white] (1, 0.5) rectangle (2.5, 1.5);
\draw[step=0.5cm, thin] (0, 0) grid (3, 1.5);
\draw[ultra thick, ->] (1, 0) -- (0, 0);
\draw[ultra thick] (1, 0) -- (1, 0.5);
\draw[ultra thick] (1, 0.5) -- (2.5, 0.5);
\draw[ultra thick] (2.5, 0.5) -- (2.5, 1.5);
\draw[ultra thick] (2.5, 1.5) -- (3, 1.5);
\draw[decoration={brace, raise=5pt},decorate] (3, 1.5) -- node[right=6pt] {$i$} (3, 0.5);
\draw[decoration={brace, raise=5pt, mirror},decorate] (2.5, 0) -- node[below=6pt] {$n - (\mu(\pi))_i$} (3, 0);
\end{tikzpicture}
\end{center}
In consequence,
$$m + n - V(\pi)_i = m + n - (i + n - (\mu(\pi))_i) = m - i + (\mu(\pi))_i,$$
which shows the first equality. By symmetry the analogue holds for $\nu(\pi)'$.
\end{proof}

\begin{prop} \label{3_prop_visual_interpretation_overlap} For a fixed partition $\lambda$ of length at most $m + n$, there is a 1-to-1 correspondence between $\mathfrak{P}(n,m)$ and $\{(\mu, \nu): \mu \star_{m,n} \nu = \lambda\}$
given by
\begin{align} \label{3_eq_visual_interpretation_overlap_map}
\pi \mapsto \left(\mu(\pi) + \lambda_{V(\pi)}, \nu(\pi)' + \lambda_{H(\pi)}\right).
\end{align}
Moreover, $\varepsilon_{m,n} \left(\mu(\pi) + \lambda_{V(\pi)}, \nu(\pi)' + \lambda_{H(\pi)}\right) = (-1)^{|\nu(\pi)|} = (-1)^{mn - |\mu(\pi)|}$.
\end{prop}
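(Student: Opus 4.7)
The plan is to construct an inverse map explicitly and then handle the sign by a direct count of inversions, using Lemma~\ref{3_lem_macdonald_page_3} as the bridge between staircase walks and the defining relation \eqref{3_condition_overlap} of overlap.

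First I would verify that the map \eqref{3_eq_visual_interpretation_overlap_map} lands in the correct set. For $\pi \in \mathfrak{P}(n,m)$, set $\mu = \mu(\pi) + \lambda_{V(\pi)}$ and $\nu = \nu(\pi)' + \lambda_{H(\pi)}$; these are partitions because $V(\pi), H(\pi)$ are increasing subsequences so $\lambda_{V(\pi)}$ and $\lambda_{H(\pi)}$ are non-increasing, while $\mu(\pi)$ and $\nu(\pi)'$ are themselves partitions. Applying Lemma~\ref{3_lem_macdonald_page_3} gives
\begin{align*}
\mu + \rho_m &= (\rho_{m+n})_{V(\pi)} + \lambda_{V(\pi)} = (\lambda + \rho_{m+n})_{V(\pi)}, \\
\nu + \rho_n &= (\rho_{m+n})_{H(\pi)} + \lambda_{H(\pi)} = (\lambda + \rho_{m+n})_{H(\pi)},
\end{align*}
and since $V(\pi) \cup H(\pi) \sorteq [m+n]$ we obtain $(\mu + \rho_m) \cup (\nu + \rho_n) \sorteq \lambda + \rho_{m+n}$. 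This is precisely condition~\eqref{3_condition_overlap}, so $\mu \star_{m,n} \nu = \lambda$.

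Next I would construct the inverse: starting with $(\mu, \nu)$ such that $\mu \star_{m, n} \nu = \lambda$, the entries of $(\mu + \rho_m) \cup (\nu + \rho_n)$ are pairwise distinct (else the overlap would be $\infty$) and sort to $\lambda + \rho_{m+n}$. Record the subsequences $V, H \subset [m+n]$ of positions in $\lambda + \rho_{m+n}$ occupied, respectively, by the elements of $\mu + \rho_m$ and $\nu + \rho_n$. These determine a unique staircase walk $\pi \in \mathfrak{P}(n,m)$ with $V(\pi) = V$ and $H(\pi) = H$, and running the previous computation in reverse verifies that this construction inverts \eqref{3_eq_visual_interpretation_overlap_map}.

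Finally, for the sign I would observe that $\varepsilon_{m,n}(\mu, \nu)$ is the sign of the permutation that merges the two strictly decreasing blocks $\mu + \rho_m$ and $\nu + \rho_n$ into the strictly decreasing sequence $\lambda + \rho_{m+n}$. Since the blocks are internally sorted, only cross-block inversions occur, and their count equals $\#\{(i,j) \in [m] \times [n] : V_i > H_j\} = \sum_{i=1}^m (V_i - i)$, because $V_i - i$ is the number of horizontal steps preceding the $i$-th vertical step of $\pi$. The identity $\mu(\pi) + \rho_m = (\rho_{m+n})_{V(\pi)}$ from Lemma~\ref{3_lem_macdonald_page_3} gives $|\mu(\pi)| = m(m+n) - \sum V_i - \binom{m}{2}$, hence $|\nu(\pi)| = mn - |\mu(\pi)| = \sum V_i - \binom{m+1}{2} = \sum_{i=1}^m (V_i - i)$, yielding the claimed sign. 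The main obstacle is keeping this bookkeeping straight; the bijection itself reduces cleanly to Lemma~\ref{3_lem_macdonald_page_3}.
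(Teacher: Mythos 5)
Your argument is correct, and for the bijection itself it follows the paper's route exactly: Lemma~\ref{3_lem_macdonald_page_3} turns the defining relation \eqref{3_condition_overlap} into the statement $(\mu+\rho_m)\cup(\nu+\rho_n)\sorteq\lambda+\rho_{m+n}$, and the inverse is recovered by reading off the position subsequences $V,H$ (unique because $\lambda+\rho_{m+n}$ is strictly decreasing) and invoking the same lemma again. Where you diverge is the sign: the paper argues by deformation, noting that the identity permutation corresponds to $\nu(\pi)=\emptyset$ and that transferring one box from $\mu(\pi)$ to $\nu(\pi)$ composes the sorting permutation with a transposition, so each box contributes a factor $(-1)$; you instead count inversions directly, identifying the sign with $(-1)^{\#\{(i,j):V_i>H_j\}}$, computing $\#\{j:H_j<V_i\}=V_i-i$, and summing to get $\sum_i(V_i-i)=mn-|\mu(\pi)|=|\nu(\pi)|$ via Lemma~\ref{3_lem_macdonald_page_3} (your arithmetic here checks out). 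Your version is slightly more explicit and self-contained --- it avoids having to justify that a single box move really is a single adjacent transposition of steps --- at the cost of a little bookkeeping, while the paper's deformation argument is shorter and more visual; both are sound, and you correctly note that only cross-block inversions occur because the two blocks $\mu+\rho_m$ and $\nu+\rho_n$ are internally decreasing.
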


\begin{proof} By Lemma~\ref{3_lem_macdonald_page_3},
\begin{multline*}
\left( \mu(\pi) + \lambda_{V(\pi)} + \rho_m \right) \cup \left( \nu(\pi)' + \lambda_{H(\pi)} + \rho_n\right) \\ = \left( \rho_{m + n} + \lambda \right)_{V(\pi)} \cup \left( \rho_{m + n} + \lambda \right)_{H(\pi)} \sorteq \lambda + \rho_{m + n},
\end{multline*} 
which implies that the map given in \eqref{3_eq_visual_interpretation_overlap_map} is well defined. We further see that the sign of the sorting permutation, say $\sigma$, only depends on the timing of the vertical steps of $\pi$. In fact, the identity permutation corresponds to the first $m$ steps of $\pi$ being vertical, and thus $\nu(\pi)$ being the empty partition. Removing a box from the Ferrers diagram of $\mu(\pi)$ and adding it to the diagram of $\nu(\pi)$ corresponds to composing $\sigma$ with a transposition, \textit{i.e.}\ multiplying its sign by $(-1)$.

We show that the map in \eqref{3_eq_visual_interpretation_overlap_map} is bijective by giving its inverse. Let $\mu$, $\nu$ be a pair of partitions whose $(m,n)$-overlap is $\lambda$. By definition, there exists a pair of subsequences $V$, $H \subset [m + n]$ with $V \cup_{m, n} H = [m + n]$ such that
$$\mu + \rho_m = \left( \lambda + \rho_{m + n} \right)_V \:\text{ and }\: \nu + \rho_n = \left( \lambda + \rho_{m + n} \right)_H
.$$
If $\pi \in \mathfrak{P}(n,m)$ denotes the staircase walk determined by $V(\pi) = V$ and $H(\pi) = H$, then Lemma~\ref{3_lem_macdonald_page_3} implies that $\mu = \mu(\pi) + \lambda_{V(\pi)}$ and $\nu = \nu(\pi)' + \lambda_{H(\pi)}$, which allows us to conclude that $\pi$ is the preimage of the pair $\mu$, $\nu$. 
\end{proof}

\begin{ex} \label{3_ex_visual_interpretation_overlap} Let us fix $m = 3$, $n = 6$ and a partition $\lambda = (7, 4, 3, 3, 3, 1)$ of length less than $m + n$. Proposition~\ref{3_prop_visual_interpretation_overlap} tells us that any staircase walk $\pi \in \mathfrak{P}(n,m)$ corresponds to a pair of partitions whose $(m,n)$-overlap equals $\lambda$. In order to visualize this correspondence, consider the following diagram of a staircase walk $\pi \in \mathfrak{P}(6,3)$ labeled by the partition $\lambda$:
\vspace{-0.3cm}
\begin{center}
\begin{tikzpicture} 
\draw[step=0.5cm, thin] (0, 0) grid (3, 1.5);
\draw[ultra thick, ->] (1, 0) -- (0, 0);
\draw[ultra thick] (1, 0) -- (1, 0.5);
\draw[ultra thick] (1, 0.5) -- (2.5, 0.5);
\draw[ultra thick] (2.5, 0.5) -- (2.5, 1.5);
\draw[ultra thick] (2.5, 1.5) -- (3, 1.5);
\node[anchor=south] at (2.75, 1.5) {\tiny{7}};
\node[anchor=west] at (2.5, 1.25) {\tiny{4}};
\node[anchor=west] at (2.5, 0.75) {\tiny{3}};
\node[anchor=south] at (2.25, 0.5) {\tiny{3}};
\node[anchor=south] at (1.75, 0.5) {\tiny{3}};
\node[anchor=south] at (1.25, 0.5) {\tiny{1}};
\node[anchor=west] at (1, 0.25) {\tiny{0}};
\node[anchor=south] at (0.75, 0) {\tiny{0}};
\node[anchor=south] at (0.25, 0) {\tiny{0}};
\end{tikzpicture} 
\end{center}
Under the map defined in \eqref{3_eq_visual_interpretation_overlap_map}, $\pi$ is sent to the pair of partitions
\begin{multline*}
(\mu, \nu) = \left(\mu(\pi) + \lambda_{V(\pi)}, \nu(\pi)' + \lambda_{H(\pi)} \right) \\ = ((5, 5, 2) + (4, 3), (3, 1, 1, 1) + (7, 3, 3, 1)) = ((9, 8, 2), (10, 4, 4, 2)).
\end{multline*}
Recalling that the parts of $\mu(\pi)$ (or $\nu(\pi)'$) correspond to the rows of boxes above the staircase walk (or columns below the staircase walk), these numbers are easy to see in the diagram: for each part of $\mu(\pi)$ (or $\nu(\pi)'$), the label of the corresponding step of $\pi$ indicates the number of boxes that must be added to the row (or column) to obtain the corresponding part of $\mu$ (or $\nu$).
\end{ex}

The visualization of overlap explained in this example also provides a framework for visualizing pairs of partitions whose overlap is infinity. We recall that the $(m,n)$-overlap of two partitions, say $\mu$ and $\nu$, is infinity if and only if the sequence $\left( \mu + \rho_m \right) \cup \left( \nu + \rho_n \right)$ contains repetitions. The visualization of this case makes use of the notion of quasi-partitions.

\begin{defn} [quasi-partition] Let $\pi \in \mathfrak{P}(n,m)$. We call a sequence $\alpha$ of length $m + n$ a quasi-partition associated to the staircase walk $\pi$ if it satisfies all of the following conditions:
\begin{enumerate}
\item the elements of $\alpha$ are possibly negative integers but $\alpha_{m + n} \geq 0$;
\item there is no index $i$ so that $\alpha_{i - 1} < \alpha_i < \alpha_{i + 1}$;
\item if $i, i + 1 \in V(\pi)$ (or $i, i + 1 \in H(\pi)$), then $\alpha_{i + 1} \leq \alpha_i$;
\item if $i \in V(\pi)$ and $i + 1 \in H(\pi)$ (or vice versa), then $\alpha_{i + 1} \leq \alpha_i + 1$.
\end{enumerate}
\end{defn}
We remark that a sequence of length $m + n$ is a partition if and only if it is a quasi-partition associated to \emph{all} staircase walks in $\mathfrak{P}(n,m)$ -- unless $m = 1 = n$. In case $m = 1 = n$, a partition of length at most $2$ is still a quasi-partition associated to all staircase walks in $\mathfrak{P}(n,m)$, but the converse does not hold.

\begin{prop} \label{3_prop_visualization_of_infinite_overlap} Let $m$ and $n$ be non-negative integers. The $(m,n)$-overlap of two partitions $\mu$ and $\nu$ is equal to infinity if and only if there exist a staircase walk $\pi \in \mathfrak{P}(n,m)$ and a quasi-partition $\alpha$ associated to $\pi$ with the properties that $\alpha$ is \emph{not} a partition, $\mu = \mu(\pi) + \alpha_{V(\pi)}$ and $\nu = \nu(\pi)' + \alpha_{H(\pi)}$.
\end{prop}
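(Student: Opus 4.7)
The plan is to establish both directions of the biconditional by reducing the condition $\mu \star_{m,n} \nu = \infty$ to the condition that $\alpha + \rho_{m+n}$ contains a repeated entry. The key observation, immediate from Lemma~\ref{3_lem_macdonald_page_3}, is that whenever $\mu = \mu(\pi) + \alpha_{V(\pi)}$ and $\nu = \nu(\pi)' + \alpha_{H(\pi)}$ for some $\pi \in \mathfrak{P}(n,m)$ and sequence $\alpha$ of length $m+n$, the concatenation $(\mu + \rho_m) \cup (\nu + \rho_n)$ is a reordering of $\alpha + \rho_{m+n}$. Hence the sorted version of $(\mu + \rho_m) \cup (\nu + \rho_n)$ fails to be strictly decreasing (equivalently, the overlap is $\infty$) if and only if $\alpha + \rho_{m+n}$ contains a repetition.

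For the forward direction, assume such $\pi$ and quasi-partition $\alpha$ exist with $\alpha$ not a partition. Since $\alpha_{m+n} \geq 0$ by quasi-partition condition (1), the failure of $\alpha$ to be weakly decreasing forces some index $i$ with $\alpha_i < \alpha_{i+1}$. Condition (3) rules out $i, i+1$ being both in $V(\pi)$ or both in $H(\pi)$, so they straddle a V/H boundary; condition (4) then forces $\alpha_{i+1} = \alpha_i + 1$. This produces the repeated value $\alpha_i + (m+n-i) = \alpha_{i+1} + (m+n-i-1)$ in $\alpha + \rho_{m+n}$, and the key observation yields $\mu \star_{m,n} \nu = \infty$.

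For the backward direction, let $\beta$ denote the weakly decreasing reordering of $(\mu + \rho_m) \cup (\nu + \rho_n)$, which has at least one repeated entry by the failure of the overlap condition. Set $\alpha_i = \beta_i - (m+n-i)$ so that $\alpha + \rho_{m+n} = \beta$, and construct $\pi \in \mathfrak{P}(n,m)$ by letting $V(\pi) \subset [m+n]$ consist of those positions in $\beta$ occupied by entries of $\mu + \rho_m$, with ties resolved arbitrarily. The identities $\mu = \mu(\pi) + \alpha_{V(\pi)}$ and $\nu = \nu(\pi)' + \alpha_{H(\pi)}$ then follow immediately from Lemma~\ref{3_lem_macdonald_page_3}. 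I verify the quasi-partition axioms as follows: (1) holds because $\beta_{m+n} \geq 0$; (3) holds because two adjacent indices in $V(\pi)$ (resp.\ $H(\pi)$) force consecutive entries of the strictly decreasing $\mu + \rho_m$ (resp.\ $\nu + \rho_n$) to occupy positions $i$ and $i+1$ of $\beta$; (4) holds universally since $\beta$ is weakly decreasing. Finally, $\alpha$ fails to be a partition because the repetition $\beta_i = \beta_{i+1}$ produces $\alpha_{i+1} = \alpha_i + 1 > \alpha_i$.

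The main obstacle is the verification of quasi-partition condition (2) in the backward direction, which requires that no three consecutive entries of $\beta$ be equal. This follows from the fact that each of $\mu + \rho_m$ and $\nu + \rho_n$ is strictly decreasing and hence contributes at most one entry of any given value to $\beta$, so at most two equal entries can appear consecutively, one from each sequence. A secondary subtlety is the tie-breaking freedom in the construction of $\pi$: since the overlap being $\infty$ precisely reflects that the merge of the two sorted sequences into $\beta$ is non-unique, the proposition does not (and cannot) assert uniqueness of the pair $(\pi, \alpha)$.
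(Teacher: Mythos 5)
Your proposal is correct and follows essentially the same route as the paper: both directions rest on Lemma~\ref{3_lem_macdonald_page_3} together with the fact that the overlap is infinite exactly when $(\mu+\rho_m)\cup(\nu+\rho_n)$ contains a repetition, with the strict increase of $\alpha$ pinned to a $V/H$ boundary (forcing $\alpha_{i+1}=\alpha_i+1$) in one direction and the pair $(\pi,\alpha)$ built from the sorted union in the other. The only differences are minor: you conclude the forward direction via the reordering observation rather than the paper's explicit construction of indices $p,q$, you carry out the quasi-partition verification that the paper leaves to the reader (including condition (2) via the ``no three equal consecutive entries'' argument), and you omit the paper's check that $\mu(\pi)+\alpha_{V(\pi)}$ and $\nu(\pi)'+\alpha_{H(\pi)}$ are partitions, which the literal statement does not require since $\mu$ and $\nu$ are hypothesized to be partitions.
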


\begin{ex*} Let us illustrate this visualization for partitions whose overlap is infinity on a concrete example. The first diagram depicts a staircase walk $\pi \in \mathfrak{P}(6,3)$ labeled by a quasi-partition $\alpha$ associated to $\pi$. 
\begin{center}
\begin{tikzpicture} 
\draw[step=0.5cm, thin] (0, 0) grid (3, 1.5);
\draw[ultra thick, ->] (1, 0) -- (0, 0);
\draw[ultra thick] (1, 0) -- (1, 0.5);
\draw[ultra thick] (1, 0.5) -- (2.5, 0.5);
\draw[ultra thick] (2.5, 0.5) -- (2.5, 1);
\draw[ultra thick] (2.5, 1) -- (3, 1);
\draw[ultra thick] (3, 1) -- (3, 1.5);
\node[anchor=west] at (3, 1.25) {\tiny{4}};
\node[anchor=south] at (2.75, 1) {\tiny{2}};
\node[anchor=west] at (2.5, 0.75) {\tiny{3}};
\node[anchor=south] at (2.25, 0.5) {\tiny{1}};
\node[anchor=south] at (1.75, 0.5) {\tiny{1}};
\node[anchor=south] at (1.25, 0.5) {\tiny{-1}};
\node[anchor=west] at (1, 0.25) {\tiny{-1}};
\node[anchor=south] at (0.75, 0) {\tiny{0}};
\node[anchor=south] at (0.25, 0) {\tiny{0}};
\end{tikzpicture}
\end{center}
As in the preceding example, the label of each step of $\pi$ indicates the number of boxes that must be added to \emph{or removed from} the corresponding row of $\mu(\pi)$ (or column of $\nu(\pi)'$) to obtain the diagram of the partition $\mu$ (or $\nu$):
\begin{center}
\begin{tikzpicture}
\node(mu) at (-0.5, 0.75) {$\mu =$};
\draw[step=0.5cm, thin] (0, 0) grid (0.5, 0.5);
\draw[step=0.5cm, thin] (0, 0.5) grid (4, 1);
\draw[step=0.5cm, thin] (0, 0.99) grid (5, 1.5);
\end{tikzpicture}
\hspace{0.2cm}
\begin{tikzpicture}
\node(nu) at (-0.5, 0.75) {$\nu =$};
\draw[step=0.5cm, thin] (0, 0) grid (1, 1.5);
\draw[step=0.5cm, thin] (0, 0.99) grid (2, 1.5);
\end{tikzpicture}
\end{center}
Using the formal definition of overlap, we compute that 
$$\mu \star_{3,6} \nu = (10, 8, 1) \star_{3,6} (4,2,2) = \infty$$
since $8 + 1 = 4 + 5$.
\end{ex*}

\begin{proof}[Proof of Proposition~\ref{3_prop_visualization_of_infinite_overlap}] 
Fix a staircase walk $\pi \in \mathfrak{P}(n,m)$ and a quasi-partition $\alpha$ associated to $\pi$ with the property that $\alpha$ is not a partition. First we show that the sequence $\mu = \mu(\pi) + \alpha_{V(\pi)}$ is a partition. For any index $1 \leq p \leq m - 1$, $\mu_{p + 1} \leq \mu_p$: if $V(\pi)_{p + 1} = V(\pi)_p + 1$, the third property listed in the definition of quasi-partition implies that 
\begin{multline*}
\mu_{p + 1} = \mu(\pi)_{p + 1} + \alpha_{V(\pi)_{p + 1}} = \mu(\pi)_{p + 1} + \alpha_{V(\pi)_p + 1} \\ \leq \mu(\pi)_{p + 1} + \alpha_{V(\pi)_p}= \mu(\pi)_p + \alpha_{V(\pi)_p} = \mu_p
;
\end{multline*}
if $V(\pi)_{p + 1} = V(\pi)_p + q + 1$ for some $q \geq 2$, the combining the third and the fourth property implies that
\begin{multline*}
\mu_{p + 1} = \mu(\pi)_{p + 1} + \alpha_{V(\pi)_{p + 1}} = \mu(\pi)_{p + 1} + \alpha_{V(\pi)_p + q + 1} \\ \leq \mu(\pi)_{p + 1} + \alpha_{V(\pi)_p} + 2 = \mu(\pi)_p - q + \alpha_{V(\pi)_p} + 2 \leq \mu_p
;
\end{multline*}
if $V(\pi)_{p + 1} = V(\pi)_p + 2$, combining the second and the fourth property allows us to infer that
\begin{multline*}
\mu_{p + 1} = \mu(\pi)_{p + 1} + \alpha_{V(\pi)_{p + 1}} = \mu(\pi)_{p + 1} + \alpha_{V(\pi)_p + 2} \\ \leq \mu(\pi)_{p + 1} + \alpha_{V(\pi)_p} + 1 = \mu(\pi)_p - 1 + \alpha_{V(\pi)_p} + 1 = \mu_p
.
\end{multline*}
It also follows that $\mu_m \geq 0$: if $V(\pi)_m = m + n$, the first property implies that $$\mu_m = \mu(\pi)_{m} + \alpha_{V(\pi)_m} \geq \mu(\pi)_{m} = 0;$$
if $V(\pi)_m < m + n$, the first, third and fourth properties entail that $\alpha(V(\pi))_m \geq -1$ and thus that
$$\mu_m = \mu(\pi)_{m} + \alpha_{V(\pi)_m} \geq \mu(\pi)_{m} - 1 \geq 0.$$
We conclude that $\mu$ is indeed a partition. An analogous argument shows that the sequence $\nu = \nu(\pi)' + \alpha_{H(\pi)}$ is also a partition.

Second we show that the $(m,n)$-overlap of $\mu$ and $\nu$ is infinity by constructing a pair of indices $1 \leq p \leq m$ and $1 \leq q \leq n$ so that $\mu_p + m - p = \nu_q + n - q$. By assumption, $\alpha$ is not a partition, which a priori means that the quasi-partition $\alpha$ contains a strictly negative element or a strict increase. However, the condition that the last element of $\alpha$ be non-negative allows us to infer that $\alpha$ must contain a strict increase. More precisely, there exists an index $1 \leq i \leq m + n - 1$ so that $\alpha_{i + 1} = \alpha_i + 1$. According to the third and the fourth property, $i \in V(\pi)$ and $i + 1 \in H(\pi)$ (or vice versa). As the two cases are exact analogues, we may assume the first case. Let us introduce $p$ and $q$ by means of an annotated diagram of a possible staircase walk $\pi$:
\begin{center}
\begin{tikzpicture} 
\draw[step=0.5cm, thin] (0, 0) grid (3, 1.5);
\draw[ultra thick, ->] (1, 0) -- (0, 0);
\draw[ultra thick] (1, 0) -- (1, 0.5);
\draw[ultra thick] (1, 0.5) -- (2.5, 0.5);
\draw[ultra thick] (2.5, 0.5) -- (2.5, 1);
\draw[ultra thick] (2.5, 1) -- (3, 1);
\draw[ultra thick] (3, 1) -- (3, 1.5);
\draw[decoration={brace, raise=5pt, mirror},decorate] (3, 0.5) -- node[right=6pt] {$\scriptstyle{p}$} (3, 1.5);
\draw[decoration={brace, raise=5pt, mirror},decorate] (0, 1.5) -- node[left=6pt] {$\scriptstyle{m}$} (0, 0);
\draw[decoration={brace, raise=5pt},decorate] (0, 1.5) -- node[above=6pt] {$\scriptstyle{n}$} (3, 1.5);
\draw[decoration={brace, raise=5pt},decorate] (3, 0) -- node[below=6pt] {$\scriptstyle{q}$} (2, 0);
\draw (4, 0.3) edge[out=185,in=-45,->] (2.6, 0.75);
\draw (-0.8, 1.2) edge[out=5,in=135,->] (2.25, 0.6);
\node at (-3, 1.2) {\small{$(i + 1)$-th step with label $\alpha_{i + 1}$}};
\node at (5.65, 0.3) {\small{$i$-th step with label $\alpha_i$}};
\end{tikzpicture}
\end{center}
\vspace{-0.3cm}
We see that
\begin{align*}
\mu_p + m - p ={} & \mu(\pi)_p + \alpha_{V(\pi)_p} + m - p = n - q + 1 + \alpha_i + m - p
\\
\nu_q + n - q ={} & \nu(\pi)'_q + \alpha_{H(\pi)_q} + n - q = m - p + \alpha_{i + 1} + n - q
.
\end{align*}
By construction, $\alpha_{i + 1} = \alpha_i + 1$, from which we conclude one direction of implication in the equivalence to be shown. 

In order to show the other direction, fix a pair of partitions, say $\mu$ and $\nu$, whose $(m,n)$-overlap is infinity. Let $\alpha$ be the sequence determined by the conditions that $\alpha + \rho_{m + n}$ be non-increasing and
\begin{align*}
\alpha + \rho_{m + n} \sorteq (\mu + \rho_m) \cup (\nu + \rho_n)
.
\end{align*}
Choose a pair of subsequences $V$, $H \subset [m + n]$ so that $\left( \alpha + \rho_{m + n} \right)_V = \mu + \rho_m$ and $\left( \alpha + \rho_{m + n} \right)_H = \nu + \rho_n$. It is worth noting that this choice is not unique since the sequences $\mu + \rho_m$ and $\nu + \rho_n$ have at least one element in common. Let $\pi \in \mathfrak{P}(n,m)$ be the staircase walk determined by $V(\pi) = V$ and $H(\pi) = H$. We claim that $\alpha$ is a quasi-partition associated to $\pi$ with the properties that $\alpha$ is \emph{not} a partition, $\mu = \mu(\pi) + \alpha_{V(\pi)}$ and $\nu = \nu(\pi)' + \alpha_{H(\pi)}$. The latter two follow immediately from Lemma~\ref{3_lem_macdonald_page_3}. In addition, the sequence $\alpha$ cannot be a partition since $\alpha + \rho_{m + n}$ is not strictly decreasing. The justification that $\alpha$ is indeed a quasi-partition is left to the reader. 
\end{proof}

The fact that the choice of $V$ and $H$ in the proof of Proposition~\ref{3_prop_visualization_of_infinite_overlap} is not unique entails that there is \emph{no} 1-to-1 correspondence between $\mathfrak{P}(n,m)$ and pairs of partitions whose $(m,n)$-overlap equals infinity.

\subsection{Complementary partitions and some of their properties}
In this section we study how taking the complement of a partition interacts with other operations on partitions, such as overlap. 

\begin{defn} [complement of a partition] The $(m,n)$-complement of a partition $\lambda$ contained in the rectangle $\left\langle m^n \right\rangle$ is given by \label{symbol_complement_of_partition}
\begin{align*}
\tilde \lambda = (m - \lambda_n, \dots, m - \lambda_1) \subset \langle m^n \rangle.
\end{align*}
When it is clear from the context with respect to which rectangle the complement is taken, we dispense with stating the parameters.
\end{defn}
The visual interpretation of this notion is that for any staircase walk $\pi \in \mathfrak{P}(m,n)$, the partitions $\mu(\pi)$ and $\nu(\pi)$ are $(m,n)$-complementary. Therefore, Proposition~\ref{3_prop_visual_interpretation_overlap} entails that two partitions are $(m,n)$-complementary if and only if they do not overlap, \textit{i.e.}\ if their $(n,m)$-overlap is the empty partition. In fact, this special case of Proposition~\ref{3_prop_visual_interpretation_overlap} is equivalent to Lemma~6 in \cite{dehaye12}.

In the following remark we collect a few properties of the complement. These observations are certainly not new given that they follow directly from the definition, but they will prove useful later.

\begin{rem} \label{3_rem_properties_of_complement} Taking the complement commutes with both conjugation and addition. More concretely, if $\lambda \subset \langle m^n \rangle$, then the $(n,m)$-complement of $\lambda'$ is conjugate to the $(m,n)$-complement of $\lambda$. For an additional partition $\kappa \subset \left\langle k^n \right\rangle$, the $(m + k,n)$-complement of $\lambda + \kappa$ is given by $\tilde\lambda + \tilde\kappa$. 
\end{rem}

Overlapping two partitions almost commutes with taking the complement. In fact, one could say that the two operations are skew-commutative.

\begin{lem} \label{3_lem_overlap_and_complement} If a partition $\lambda \subset \left\langle l^{m + n} \right\rangle$ is the $(m,n)$-overlap of the partitions $\mu$ and $\nu$, then $\tilde\lambda$ is the $(m,n)$-overlap of $\tilde\mu$ and $\tilde\nu$ where we view $\mu$ and $\nu$ as subsets of $\left\langle (n + l)^m \right\rangle$ and $\left\langle (m + l)^n \right\rangle$, respectively. Moreover, $\varepsilon_{m,n}(\tilde\mu, \tilde\nu) = (-1)^{mn} \varepsilon_{m,n}(\mu, \nu)$.
\end{lem}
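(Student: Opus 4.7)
The plan is to reduce the lemma to a direct computation with sequences by using the substitution $x \mapsto C - x$ with $C := l+m+n-1$. A one-line check from the definitions of complement and $\rho_\cdot$ yields the key identities
\begin{align*}
(\tilde\mu+\rho_m)_i &= C - (\mu+\rho_m)_{m+1-i}, \\
(\tilde\nu+\rho_n)_j &= C - (\nu+\rho_n)_{n+1-j}, \\
(\tilde\lambda+\rho_{m+n})_k &= C - (\lambda+\rho_{m+n})_{m+n+1-k},
\end{align*}
which express ``append $\rho_\cdot$ then take complement'' as ``reflect entries through $C$ and reverse order.'' For instance, the first identity reduces to the arithmetic equality $(l+n-\mu_{m+1-i}) + (m-i) = C - \bigl(\mu_{m+1-i} + i - 1\bigr)$.

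From the defining multiset identity $\lambda+\rho_{m+n} \sorteq (\mu+\rho_m)\cup(\nu+\rho_n)$, applying $x \mapsto C-x$ entrywise preserves the equality of multisets, and translating each side through the identities above produces $\tilde\lambda + \rho_{m+n} \sorteq (\tilde\mu+\rho_m)\cup(\tilde\nu+\rho_n)$. This is exactly the defining condition for $\tilde\mu \star_{m,n} \tilde\nu = \tilde\lambda$.

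For the sign, let $\sigma$ be the sorting permutation for the original overlap, so $\varepsilon(\sigma) = \varepsilon_{m,n}(\mu,\nu)$. The identities above allow the new sorting permutation $\tilde\sigma$ to be expressed as $\tilde\sigma = \eta^{-1}\,\sigma\,\rho$, where $\eta \in S_{m+n}$ reverses the first $m$ and last $n$ positions separately (from the within-half reversals on the right-hand side) and $\rho \in S_{m+n}$ is the full reversal (from $x \mapsto C-x$ flipping a decreasing sort into an increasing one). Then $\varepsilon(\eta) = (-1)^{m(m-1)/2 + n(n-1)/2}$ and $\varepsilon(\rho) = (-1)^{(m+n)(m+n-1)/2}$; summing the three exponents gives $m(m-1)+n(n-1)+mn$, which is $\equiv mn \pmod{2}$, so $\varepsilon_{m,n}(\tilde\mu,\tilde\nu) = (-1)^{mn}\,\varepsilon_{m,n}(\mu,\nu)$. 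The main bookkeeping challenge is to keep the position-permutations $\eta$, $\rho$ cleanly separate from the value-substitution $x \mapsto C-x$; this is where signs are easiest to lose, but once all three factors are accounted for the remaining work is elementary arithmetic.
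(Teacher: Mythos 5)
Your proof is correct, and it takes a genuinely different route from the paper. You argue directly from the defining condition \eqref{3_condition_overlap}: the reflection $x \mapsto C - x$ with $C = l+m+n-1$ turns ``complement then add $\rho$'' into ``add $\rho$, reflect values, reverse order'' (your three displayed identities all check out), so the multiset equality for $(\mu,\nu,\lambda)$ transfers to $(\tilde\mu,\tilde\nu,\tilde\lambda)$; and since all entries are distinct, the new sorting permutation factors as the old one composed with the within-block reversal $\eta$ and the full reversal $\rho$, whose signs combine to exponent $m(m-1)+n(n-1)+mn \equiv mn \pmod 2$, as you compute. The paper instead leans on the staircase-walk machinery: by Proposition~\ref{3_prop_visual_interpretation_overlap} it writes $\mu = \mu(\pi) + \lambda_{V(\pi)}$, $\nu = \nu(\pi)' + \lambda_{H(\pi)}$, passes to the walk $\tau$ obtained by walking $\pi$ backwards and rotating the rectangle by $180$ degrees (so $\mu(\tau) = \nu(\pi)$, $\nu(\tau) = \mu(\pi)$, $\widetilde{\lambda_{V(\pi)}} = \tilde\lambda_{V(\tau)}$), and then reads off both the overlap and the sign from the formula $\varepsilon = (-1)^{|\nu(\pi)|}$ in that proposition. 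Your version is more elementary and self-contained (it never needs Proposition~\ref{3_prop_visual_interpretation_overlap}, only the definition of overlap), at the cost of explicit permutation bookkeeping; the paper's version is shorter given the machinery already established and makes the geometric content ($180$-degree rotation of the walk) visible, which is in keeping with the section's theme.
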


\begin{proof} According to Proposition~\ref{3_prop_visual_interpretation_overlap}, whenever $\lambda = \mu \star_{m,n} \nu$ there exists a staircase walk $\pi \in \mathfrak{P}(n,m)$ such that $\mu = \mu(\pi) + \lambda_{V(\pi)}$ and $\nu = \nu(\pi)' + \lambda_{H(\pi)}$. Hence,
\begin{align*}
\tilde\mu = \nu(\pi) + \widetilde{\lambda_{V(\pi)}} \text{ and } \tilde\nu = \mu(\pi)' + \widetilde{\lambda_{H(\pi)}}
.
\end{align*}
If $\tau \in \mathfrak{P}(n,m)$ is the staircase walk obtained from $\pi$ by walking in the opposite direction (\textit{i.e.}\ up the stairs) and rotating the entire grid by 180 degrees, then  $\mu(\tau) = \nu(\pi)$ and $\nu(\tau) = \mu(\pi)$. Let us illustrate this relationship between the partitions associated to $\pi$ and $\tau$ by means of diagrams:
\begin{center}
\begin{tikzpicture} 
\node(pi) at (-0.5, 0.75) {$\pi =$};
\fill[black!10!white] (0, 0) rectangle (1, 1.5);
\fill[black!10!white] (1, 0.5) rectangle (2.5, 1.5);
\draw[step=0.5cm, thin] (0, 0) grid (3, 1.5);
\draw[ultra thick, ->] (1, 0) -- (0, 0);
\draw[ultra thick] (1, 0) -- (1, 0.5);
\draw[ultra thick] (1, 0.5) -- (2.5, 0.5);
\draw[ultra thick] (2.5, 0.5) -- (2.5, 1.5);
\draw[ultra thick] (2.5, 1.5) -- (3, 1.5);
\end{tikzpicture}
\begin{tikzpicture} 
\node(arrow) at (-1.5, 0.85) {$\xmapsto{\text{walk up and rotate}}$
};
\fill[black!10!white] (0.5, 0) rectangle (2, 1);
\fill[black!10!white] (2, 0) rectangle (3, 1.5);
\draw[step=0.5cm, thin] (0, 0) grid (3, 1.5);
\draw[ultra thick, ->] (0.5, 0) -- (0, 0);
\draw[ultra thick] (0.5, 0) -- (0.5, 1);
\draw[ultra thick] (0.5, 1) -- (2, 1);
\draw[ultra thick] (2, 1) -- (2, 1.5);
\draw[ultra thick] (2, 1.5) -- (3, 1.5);
\node(pi) at (3.45, 0.75) {$= \tau$};
\end{tikzpicture}
\end{center}
We see that the Ferrers diagrams of both $\mu(\pi)$ and $\nu(\tau)$ are determined by the boxes colored in gray, while the diagrams of both $\nu(\pi)$ and $\mu(\tau)$ correspond to the white boxes.
In addition, we infer from
\begin{align*}
V(\pi)_{m + 1 - i} = m + n + 1 - V(\tau)_i \:\text{ and }\: H(\pi)_{n + 1 - i} = m + n + 1 - H(\tau)_i
\end{align*}
that $\widetilde{\lambda_{V(\pi)}} = \tilde\lambda_{V(\tau)}$ and $\widetilde{\lambda_{H(\pi)}} = \tilde\lambda_{H(\tau)}$. Invoking again Proposition~\ref{3_prop_visual_interpretation_overlap}, we conclude that $\tilde{\lambda} = \tilde{\mu} \star_{m,n} \tilde{\nu}$.
The sign of this overlap is given by
\begin{align*}
\hspace{21.1pt} \varepsilon_{m,n}(\tilde\mu, \tilde\nu) = (-1)^{|\nu(\tau)|} = (-1)^{|\mu(\pi)|} = (-1)^{mn - |\nu(\pi)|} = (-1)^{mn} \varepsilon_{m,n}(\mu, \nu)
. \hspace{21.1pt} \qedhere 
\end{align*}
\end{proof}

\subsection{Marked staircase walks}
Our second visualization for the overlap of two partitions makes use of the notion of subpartitions, which are obtained by viewing overlap as a containment relation on partitions.

\begin{defn} [subpartition] Let $\lambda$ and $\mu$ be partitions. We call $\mu$ an $(m,n)$-subpartition of $\lambda$ if there exists a partition $\nu$ such that
$\mu \star_{m,n} \nu = \lambda$. Equivalently, $\mu$ is an $(m,n)$-subpartition of $\lambda$ if there exists a subsequence $M \subset [m + n]$ with $l(M) = m$ such that for all $1 \leq j \leq m$
\begin{align*}
\mu_j + m - j = \lambda_{M_j} + m + n - M_j.
\end{align*}
and $\mu_{m + 1} = 0 = \lambda_{m + n + 1}$. We denote the subpartition of $\lambda$ corresponding to the subsequence $M \subset [m + n]$ by \label{symbol_subpartition} $\sub_{m + n}(\lambda, M)$. If the parameter $m + n$ is clear from the context, it is sometimes omitted.
\end{defn}

\begin{ex} Let $\lambda$ be a partition of length at most $n$. The easiest example of a subpartition of $\lambda$ corresponds to removing one element from $[n]$: for $1 \leq j \leq n$, \begin{align} \label{3_ex_easiest_subpartition_eq}
\sub_n\left(\lambda, [n] \setminus (j) \right) = \lambda_{[n] \setminus (j)} + \left\langle 1^{j - 1} \right\rangle.
\end{align} 
This observation makes it possible to construct subpartitions $\sub(\lambda, M)$ for $M \subset [n]$ iteratively.
\end{ex}

In order to formally state our second visual interpretation for overlap, we also require the following technical definition.

\begin{defn} Let $n$ be a non-negative integer and $K \subset [n]$ a subsequence. We define \label{symbol_C_n(K)}
\begin{align*}
C_n(K) \sorteq (n - j + 1: j \not\in K)
\end{align*}
so that $C_n(K)$ is a subsequence of $[n]$.
\end{defn}
Let us give a quick numerical example: $C_6((1,2,4,5)) = (1,4)$.

\begin{lem} \label{3_lem_overlap_with_subpartition} Let $\lambda \subset \langle m^n \rangle$ be a partition and $K \subset [n]$ a subsequence. If $\kappa$ is the subpartition $\sub_n(\lambda, K)$, then
\begin{align} \label{3_lem_overlap_with_subpartition_eq}
\lambda' \star_{m, l(C_n(K))} \sub\left(\tilde\lambda, C_n(K) \right) = \kappa'
.
\end{align}
Furthermore, $\varepsilon_{m, l(C_n(K))}\left(\lambda', \sub_n\left(\tilde\lambda, C_n(K) \right) \right) = (-1)^{\left|\tilde\lambda_{C_n(K)}\right|}$.
\end{lem}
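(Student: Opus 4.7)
The plan is to reduce \eqref{3_lem_overlap_with_subpartition_eq} to an identity of subsets of $\{0, 1, \dots, m + n - 1\}$. Write $p := l(K)$, $q := l(C_n(K)) = n - p$ and $r := m + q$, and identify each partition appearing in the lemma with the underlying set of its $\rho$-shifted sequence. The crucial input is the classical duality: for any $\mu \subset \langle M^N \rangle$,
$$\{\mu_j + N - j : j \in [N]\} \sqcup \{N - 1 + i - \mu'_i : i \in [M]\} = \{0, 1, \dots, M + N - 1\}.$$

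First, applying this duality to $\lambda' \subset \langle n^m \rangle$ partitions $\{0, \dots, m + n - 1\}$ into $B := \{\lambda'_i + m - i : i \in [m]\}$ and $D := \{d_k : k \in [n]\}$ with $d_k := m - 1 + k - \lambda_k$, which is strictly increasing in $k$ because $\lambda + \rho_n$ is strictly decreasing. Using $\tilde\lambda_{n + 1 - k} = m - \lambda_k$ together with the explicit formula $C_n(K)_j = n + 1 - \overline{K}_{q + 1 - j}$ (where $\overline{K} := [n] \setminus K$), I would identify the underlying set of $\sub_n(\tilde\lambda, C_n(K)) + \rho_q = (\tilde\lambda + \rho_n)_{C_n(K)}$ as $C := \{d_k : k \in \overline{K}\} \subset D$. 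Applying the same duality to $\kappa'$, viewed as a partition in $\langle p^r \rangle$, and simplifying via the defining relation $\kappa_i + p - i = \lambda_{K_i} + n - K_i$, the underlying set of $\kappa' + \rho_r$ comes out to be the complement in $\{0, \dots, m + n - 1\}$ of $\{d_k : k \in K\}$, which equals $B \sqcup C$. This is precisely the sorting identity \eqref{3_lem_overlap_with_subpartition_eq}.

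For the sign I would compute the signature of the shuffle permutation that merges the two decreasing sequences $\lambda' + \rho_m$ and $\sub_n(\tilde\lambda, C_n(K)) + \rho_q$ into the decreasing sequence $\kappa' + \rho_r$: since both summand sequences are already strictly decreasing, it equals $(-1)^N$ with $N = |\{(b, c) \in B \times C : b < c\}|$. For each $c = d_k \in C$, the monotonicity of $k \mapsto d_k$ gives $|\{d \in D : d < d_k\}| = k - 1$, and combining this with $B \sqcup D = \{0, \dots, m + n - 1\}$ yields $|\{b \in B : b < d_k\}| = d_k - (k - 1) = m - \lambda_k$. Summing over $k \in \overline{K}$ and reindexing by $l = n + 1 - k \in C_n(K)$ produces $N = \sum_{l \in C_n(K)} \tilde\lambda_l = |\tilde\lambda_{C_n(K)}|$, which is the claimed sign. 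I expect this sign computation to be the main obstacle: it requires keeping three indexings in play ($K$, $\overline{K}$, and $C_n(K)$) and exploiting the monotonicity of $d_k$ to convert the combinatorial count $|\{b \in B : b < d_k\}|$ into the simple arithmetic expression $m - \lambda_k$.
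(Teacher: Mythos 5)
Your proof is correct, but it takes a genuinely different route from the paper's. You verify the defining condition of the overlap directly at the level of shifted sequences viewed as subsets of $\{0,\dots,m+n-1\}$: the classical complementarity between $\{\mu_j + N - j : j \in [N]\}$ and $\{N-1+i-\mu'_i : i \in [M]\}$ lets you identify the underlying sets of $\lambda'+\rho_m$, of $\sub_n(\tilde\lambda, C_n(K))+\rho_q$ and of $\kappa'+\rho_{m+q}$, and the identity $\kappa'+\rho_{m+q} \sorteq (\lambda'+\rho_m)\cup(\sub_n(\tilde\lambda,C_n(K))+\rho_q)$ becomes the set equation $B \sqcup C = \{0,\dots,m+n-1\}\setminus\{d_k : k \in K\}$; the sign then falls out as an inversion count of the merge, and your computations $\#\{b \in B : b < d_k\} = d_k-(k-1) = m-\lambda_k$ and the reindexing $l = n+1-k$ are both correct. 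The paper instead argues by induction on $l(K)$: it constructs for each $K$ an explicit staircase walk $\pi$ satisfying three properties (the first two being equivalent, via Proposition~\ref{3_prop_visual_interpretation_overlap}, to the asserted overlap identity, the third a bookkeeping condition on the positions of certain horizontal steps), and the induction step removes one marked horizontal step from $\pi$. Your argument is shorter and non-inductive -- the only input beyond the definitions is the standard duality, which enters the paper only implicitly through Lemma~\ref{3_lem_macdonald_page_3} -- whereas the paper's constructive proof additionally yields the walk-surgery description that is reused in Example~\ref{3_ex_visualize_construction_in_lem_overlap_with_subpartition} and in the visual description of the inverse map following Proposition~\ref{3_prop_subpartition_interpretation_overlap}. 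Two small points worth making explicit in your write-up: since all elements of $B \sqcup C$ are distinct and the cardinalities match, the set identity really does give the relation $\sorteq$ required by Definition~\ref{3_defn_overlap} (in particular the overlap is not $\infty$); and the subpartition $\sub_n(\tilde\lambda, C_n(K))$ exists automatically because any subsequence of the strictly decreasing sequence $\tilde\lambda+\rho_n$, after subtracting the appropriate $\rho_q$, is again a partition.
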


\begin{proof} We prove a slightly stronger statement by induction on the length of $K$. We claim that for each $K \subset [n]$, there exists a staircase walk $\pi \in \mathfrak{P}(l(C_n(K)), m)$ with the following properties:
\begin{enumerate}
\item $\lambda' = \mu(\pi) + \left( \sub(\lambda, K)' \right)_{V(\pi)}$;
\item $\sub\left(\tilde\lambda, C_n(K) \right) = \nu(\pi)' + \left( \sub(\lambda, K)' \right)_{H(\pi)}$;
\item for each element $i \in [n]$ with $i < \min\{K\}$, the $(l(C_n(K)) - i + 1)$-th \emph{horizontal} step of $\pi$ is the $(\lambda_i + l(C_n(K)) - i + 1)$-th step of $\pi$.
\end{enumerate}
Notice that the existence of a staircase walk $\pi$ with the first two properties is equivalent to the equality stated in \eqref{3_lem_overlap_with_subpartition_eq}, according to the correspondence given in Proposition~\ref{3_prop_visual_interpretation_overlap}. For the base case $l(K) = 0$, we put a visual interpretation on the fact that $\lambda'$ and $\tilde\lambda'$ are $(n,m)$-complementary to infer the existence of $\pi \in \mathfrak{P}(n,m)$ such that $\lambda' = \mu(\pi)$ and $\tilde\lambda = \nu(\pi)'$. By definition, $\pi$ satisfies the first two conditions, and the third can be read off the following annotated diagram of the staircase walk $\pi \in \mathfrak{P}(6,3)$ with $\mu(\pi) = \lambda' = (5, 5, 2)$ (colored in gray):
\begin{center}
\begin{tikzpicture} 
\fill[black!10!white] (0, 0) rectangle (1, 1.5);
\fill[black!10!white] (1, 0.5) rectangle (2.5, 1.5);
\fill[black!30!white] (1, 0.5) rectangle (1.5, 1.5);
\draw[step=0.5cm, thin] (0, 0) grid (3, 1.5);
\draw[ultra thick, ->] (1, 0) -- (0, 0);
\draw[ultra thick] (1, 0) -- (1, 0.5);
\draw[ultra thick] (1, 0.5) -- (2.5, 0.5);
\draw[ultra thick] (2.5, 0.5) -- (2.5, 1.5);
\draw[ultra thick] (2.5, 1.5) -- (3, 1.5);
\draw[decoration={brace, raise=5pt},decorate] (3, 1.5) -- node[right=6pt] {$\lambda_i$} (3, 0.5);
\draw[decoration={brace, raise=5pt, mirror},decorate] (1, 0) -- node[below=6pt] {$n - i + 1$} (3, 0);
\draw[decoration={brace, raise=5pt},decorate] (0, 1.5) -- node[above=6pt] {$n$} (3, 1.5);
\end{tikzpicture}
\end{center}
The annotations are based on the case $i = 3$.

For the induction step, consider a subsequence $(k) \cup K \subset [n]$ together with a staircase walk $\pi \in \mathfrak{P}(l(C_n(K)), m)$ which possesses the three properties stated above for the subsequence $K$. Construct a staircase walk $\tau \in \mathfrak{P}(l(C_n(K)) - 1, m)$ by removing the $(l(C_n(K)) - k + 1)$-th horizontal step from $\pi$; or equivalently, by removing the $(\lambda_k + l(C_n(K)) - k + 1)$-th step from $\pi$. By construction, the third property still holds for all elements $i \in [n]$ with $i < \min \{ (k) \cup K \} = k$. In order to justify that $\tau$ also satisfies the other two conditions, we first observe that 
\begin{multline} \label{3_in_proof_lem_overlap_with_subpartition}
\sub(\lambda, (k) \cup K)' = \left( (\lambda_k + l(C_n(K)) - k) \cup \sub(\lambda, K) \right)' \\ = \left\langle 1^{\lambda_k + l(C_n(K)) - k} \right\rangle + \sub(\lambda, K)'
.
\end{multline}
In particular, $\sub(\lambda, K)'$ has length at most $\lambda_k + l(C_n(K)) - k$. Hence, the fact that the first $\lambda_k + l(C_n(K)) - k$ steps of $\pi$ and $\tau$ are identical allows us to deduce that
\begin{align*}
\nu(\tau)' + \left(\sub(\lambda, (k) \cup K)'\right)_{H(\tau)} ={} & \nu(\tau)'+ \left(\sub(\lambda, K)'\right)_{H(\tau)} + \left\langle 1^{\lambda_k + l(C_n(K)) - k} \right\rangle_{H(\tau)} 
\\
={} & \nu(\tau)' + \left(\sub(\lambda, K)'\right)_{H(\pi)} + \left\langle 1^{l(C_n(K)) - k} \right\rangle
.
\intertext{By construction, $\nu(\tau)'$ is a subsequence of $\nu(\pi)'$:}
\nu(\tau)' + \left(\sub(\lambda, (k) \cup K)'\right)_{H(\tau)} ={} & \left( \nu(\pi)' \right)_{[l(C_n(K))] \setminus (l(C_n(K)) - k + 1)} \\ & + \left(\sub(\lambda, K)'\right)_{H(\pi)} + \left\langle 1^{l(C_n(K)) - k} \right\rangle
.
\intertext{Hence, the second property of $\pi$, together with the observation that the length of $\left(\sub(\lambda, K)'\right)_{H(\pi)}$ is at most $l(C_n(K)) - k$, gives}
\nu(\tau)' + \left(\sub(\lambda, (k) \cup K)'\right)_{H(\tau)} ={} & \sub\left(\tilde{\lambda}, C_n(K)\right)_{[l(C_n(K))] \setminus (l(C_n(K)) - k + 1)} \\ & + \left\langle 1^{l(C_n(K)) - k} \right\rangle
.
\intertext{Finally, the equality in \eqref{3_ex_easiest_subpartition_eq} on page \pageref{3_ex_easiest_subpartition_eq} states that}
\nu(\tau)' + \left(\sub(\lambda, (k) \cup K)'\right)_{H(\tau)} ={} & \sub\left(\tilde{\lambda}(C_n(K)), [l(C_n(K))] \setminus (l(C_n(K)) - k + 1)\right) 
\\
={} & \sub\left(\tilde{\lambda}, C_n(K)_{[l(C_n(K))] \setminus (l(C_n(K)) - k + 1)} \right) 
\\
={} & \sub \left( \tilde{\lambda}, C_n(K) \setminus (n - k + 1)\right) \\
={} & \sub \left( \tilde{\lambda}, C((k) \cup K) \right)
.
\end{align*}
Combining the first and the third property of $\pi$ allows us to infer the first property for $\tau$. Indeed, by the equality given in \eqref{3_in_proof_lem_overlap_with_subpartition},
\begin{align*}
\mu(\tau) + \left(\sub(\lambda, (k) \cup K)'\right)_{V(\tau)} ={} & \mu(\tau) + \left\langle 1^{\lambda_k + l(C_n(K)) - k} \right\rangle_{V(\tau)} + \left(\sub(\lambda, K)'\right)_{V(\tau)}
.
\intertext{Given that $\sub(\lambda, K)'$ is at most of length $\lambda_k + l(C_n(K)) - k$, the observation that the first $\lambda_k + l(C_n(K)) - k$ steps of $\pi$ and $\tau$ are identical allows us to infer that}
\mu(\tau) + \left(\sub(\lambda, (k) \cup K)'\right)_{V(\tau)} ={} & \mu(\tau) + \left\langle 1^{\lambda_k + l(C_n(K)) - k} \right\rangle_{V(\pi)} + \left(\sub(\lambda, K)'\right)_{V(\pi)}
\intertext{Moreover, the third property implies that there are exactly $\lambda_k$ vertical steps among the first $\lambda_k + l(C_n(K)) - k$ steps of $\pi$:}
\mu(\tau) + \left(\sub(\lambda, (k) \cup K)'\right)_{V(\tau)} ={} & \mu(\tau) + \left\langle 1^{\lambda_k} \right\rangle + \left(\sub(\lambda, K)'\right)_{V(\pi)}
.
\intertext{By construction, $\mu(\tau) = \mu(\pi) - \left\langle 1^{\lambda_k} \right\rangle$, from which we conclude that}
\mu(\tau) + \left(\sub(\lambda, (k) \cup K)'\right)_{V(\tau)} ={} & \mu(\pi) + \left(\sub(\lambda, K)'\right)_{V(\pi)} = \lambda'
.
\end{align*} 
This justifies the claim, and thus the equality in \eqref{3_lem_overlap_with_subpartition_eq}. For the statement on the sign, recall that Proposition ~\ref{3_prop_visual_interpretation_overlap} entails that 
\begin{align*}
\varepsilon_{m, l(C_n(K))}\left(\lambda', \sub\left(\tilde\lambda, C_n(K)\right)\right) = (-1)^{|\nu(\pi)|} = (-1)^{\left|\nu(\pi)' \right|} = (-1)^{\left| {\tilde{\lambda}}_{C_n(K)} \right|}
\end{align*}
since $\nu(\pi)' = \tilde{\lambda}_{C_n(K)}$ by construction.
\end{proof}

\begin{ex} \label{3_ex_visualize_construction_in_lem_overlap_with_subpartition} To visualize this construction on a concrete example, fix $m = 4$, $n = 7$, a partition $\lambda = (4,4,2,2,1,1,1) \subset \left\langle 4^7 \right\rangle$ and $K = (1, 4, 5, 7) \subset [7]$. Draw the diagram of the staircase walk $\pi \in \mathfrak{P}(7,4)$ that is determined by the condition that $\mu(\pi) = \lambda'$, and then mark/color the horizontal steps that lie in the subsequence $H(\pi)_{[7] \setminus C_7(K)}$. In our example $C_7(K) = (2,5,6)$. 
\begin{center}
\begin{tikzpicture}
\fill[black!10!white] (3, 0) rectangle (3.5, 2);
\fill[black!10!white] (2, 0) rectangle (2.5, 2);
\fill[black!10!white] (1.5, 0) rectangle (2, 2);
\fill[black!10!white] (0, 0) rectangle (0.5, 2);
\draw[step=0.5cm, thin] (0, 0) grid (3.5, 2);
\draw[ultra thick] (3.5, 2) -- (3.5, 1.5);
\draw[ultra thick] (3.5, 1.5) -- (2, 1.5);
\draw[ultra thick] (2, 1.5) -- (2, 1);
\draw[ultra thick] (2, 1) -- (1, 1);
\draw[ultra thick] (1, 1) -- (1, 0);
\draw[ultra thick, ->] (1, 0) -- (0, 0);
\draw[ultra thick, black!40!white] (3.5, 1.5) -- (3, 1.5);
\draw[ultra thick, black!40!white] (2.5, 1.5) -- (2, 1.5);
\draw[ultra thick, black!40!white] (2, 1) -- (1.5, 1);
\draw[ultra thick, black!40!white] (0.5, 0) -- (0, 0);
\node[anchor=north] at (3.25, 0) {\tiny{7}};
\node[anchor=north] at (2.25, 0) {\tiny{5}};
\node[anchor=north] at (1.75, 0) {\tiny{4}};
\node[anchor=north] at (0.25, 0) {\tiny{1}};
\end{tikzpicture}
\end{center}
The numbers along the bottom indicate which elements of $K$ the marked horizontal steps correspond to. Now, imagine that $\pi$ is labeled by the empty partition $(0, \dots, 0)$ as described in Example \ref{3_ex_visual_interpretation_overlap}. Throughout this construction the labeling of the staircase walk will keep track of $\sub(\lambda, L)'$ where $L$ consists of the elements of $K$ that have been removed from the diagram. At the moment, $\sub(\lambda, L)' = \sub(\lambda, \emptyset)' = \emptyset$, which matches the (imaginary) labels.

Following the construction outlined in the proof of Lemma~\ref{3_lem_overlap_with_subpartition}, remove the marked horizontal step corresponding to the largest element $k \in K$ and increase the label of each preceding step by 1:
\begin{center}
\begin{tikzpicture}
\fill[black!10!white] (3, 0) rectangle (3.5, 2);
\fill[black!10!white] (2, 0) rectangle (2.5, 2);
\fill[black!10!white] (1.5, 0) rectangle (2, 2);
\fill[black!10!white] (0, 0) rectangle (0.5, 2);
\draw[step=0.5cm, thin] (0, 0) grid (3.5, 2);
\draw[ultra thick] (3.5, 2) -- (3.5, 1.5);
\draw[ultra thick] (3.5, 1.5) -- (2, 1.5);
\draw[ultra thick] (2, 1.5) -- (2, 1);
\draw[ultra thick] (2, 1) -- (1, 1);
\draw[ultra thick] (1, 1) -- (1, 0);
\draw[ultra thick, ->] (1, 0) -- (0, 0);
\draw[ultra thick, black!40!white] (3.5, 1.5) -- (3, 1.5);
\draw[ultra thick, black!40!white] (2.5, 1.5) -- (2, 1.5);
\draw[ultra thick, black!40!white] (2, 1) -- (1.5, 1);
\draw[ultra thick, black!40!white] (0.5, 0) -- (0, 0);
\node[anchor=north] at (3.25, 0) {\tiny{7}};
\node[anchor=north] at (2.25, 0) {\tiny{5}};
\node[anchor=north] at (1.75, 0) {\tiny{4}};
\node[anchor=north] at (0.25, 0) {\tiny{1}};
\end{tikzpicture}
\begin{tikzpicture}
\node(arrow) at (-0.5, 0.95) {$\mapsto$};
\fill[black!10!white] (2, 0) rectangle (2.5, 2);
\fill[black!10!white] (1.5, 0) rectangle (2, 2);
\fill[black!10!white] (0, 0) rectangle (0.5, 2);
\draw[step=0.5cm, thin] (0, 0) grid (3, 2);
\draw[ultra thick] (3, 2) -- (3, 1.5);
\draw[ultra thick] (3, 1.5) -- (2, 1.5);
\draw[ultra thick] (2, 1.5) -- (2, 1);
\draw[ultra thick] (2, 1) -- (1, 1);
\draw[ultra thick] (1, 1) -- (1, 0);
\draw[ultra thick, ->] (1, 0) -- (0, 0);
\draw[ultra thick, black!40!white] (2.5, 1.5) -- (2, 1.5);
\draw[ultra thick, black!40!white] (2, 1) -- (1.5, 1);
\draw[ultra thick, black!40!white] (0.5, 0) -- (0, 0);
\node[anchor=north] at (2.25, 0) {\tiny{5}};
\node[anchor=north] at (1.75, 0) {\tiny{4}};
\node[anchor=north] at (0.25, 0) {\tiny{1}};
\node[anchor=west] at (3, 1.75) {\tiny{1}};
\end{tikzpicture}
\end{center} 
According to the equality in \eqref{3_in_proof_lem_overlap_with_subpartition}, we have that
$$\sub(\lambda, L)' = \sub(\lambda, (7))' = \left\langle 1^{\lambda_7 + 7 - 7} \right\rangle + \emptyset = (1),$$
which thus matches the labels. In fact, the number of steps preceding the marked horizontal step corresponding to the largest remaining element $k \in K$ is always equal to $\lambda_k + l(C(L)) - k$, owing to the third property shown in the proof of the preceding Lemma. Therefore, the recursive equality in \eqref{3_in_proof_lem_overlap_with_subpartition} entails that increasing the labels of the preceding steps by 1 upon the removal of an element $k$ from the diagram ensures that the labels of the staircase walk will always match $\sub(\lambda, L)'$. Proceeding in this manner, you thus end up with a staircase walk labeled by $\sub(\lambda, K)'$:
\begin{center}
\begin{tikzpicture}
\fill[black!10!white] (1.5, 0) rectangle (2, 2);
\fill[black!10!white] (0, 0) rectangle (0.5, 2);
\draw[step=0.5cm, thin] (0, 0) grid (2.5, 2);
\draw[ultra thick] (2.5, 2) -- (2.5, 1.5);
\draw[ultra thick] (2.5, 1.5) -- (2, 1.5);
\draw[ultra thick] (2, 1.5) -- (2, 1);
\draw[ultra thick] (2, 1) -- (1, 1);
\draw[ultra thick] (1, 1) -- (1, 0);
\draw[ultra thick, ->] (1, 0) -- (0, 0);
\draw[ultra thick, black!40!white] (2, 1) -- (1.5, 1);
\draw[ultra thick, black!40!white] (0.5, 0) -- (0, 0);
\node[anchor=west] at (2.5, 1.75) {\tiny{2}};
\node[anchor=south] at (2.25, 1.5) {\tiny{1}};
\node[anchor=north] at (1.75, 0) {\tiny{4}};
\node[anchor=north] at (0.25, 0) {\tiny{1}};
\end{tikzpicture}
\begin{tikzpicture}
\node(arrow) at (-0.75, 0.95) {$\mapsto$};
\fill[black!10!white] (0, 0) rectangle (0.5, 2);
\draw[step=0.5cm, thin] (0, 0) grid (2, 2);
\draw[ultra thick] (2, 2) -- (2, 1.5);
\draw[ultra thick] (2, 1.5) -- (1.5, 1.5);
\draw[ultra thick] (1.5, 1.5) -- (1.5, 1);
\draw[ultra thick] (1.5, 1) -- (1, 1);
\draw[ultra thick] (1, 1) -- (1, 0);
\draw[ultra thick, ->] (1, 0) -- (0, 0);
\draw[ultra thick, black!40!white] (0.5, 0) -- (0, 0);
\node[anchor=west] at (2, 1.75) {\tiny{3}};
\node[anchor=south] at (1.75, 1.5) {\tiny{2}};
\node[anchor=west] at (1.5, 1.25) {\tiny{1}};
\node[anchor=north] at (0.25, 0) {\tiny{1}};
\end{tikzpicture}
\begin{tikzpicture}
\node(arrow) at (-0.75, 0.95) {$\mapsto$};
\draw[step=0.5cm, thin] (0, 0) grid (1.5, 2);
\draw[ultra thick] (1.5, 2) -- (1.5, 1.5);
\draw[ultra thick] (1.5, 1.5) -- (1, 1.5);
\draw[ultra thick] (1, 1.5) -- (1, 1);
\draw[ultra thick] (1, 1) -- (0.5, 1);
\draw[ultra thick] (0.5, 1) -- (0.5, 0);
\draw[ultra thick, ->] (0.5, 0) -- (0, 0);
\node[anchor=south] at (0.25, 0) {\tiny{1}};
\node[anchor=west] at (0.5, 0.25) {\tiny{1}};
\node[anchor=west] at (0.5, 0.75) {\tiny{1}};
\node[anchor=south] at (0.75, 1) {\tiny{1}};
\node[anchor=west] at (1, 1.25) {\tiny{2}};
\node[anchor=south] at (1.25, 1.5) {\tiny{3}};
\node[anchor=west] at (1.5, 1.75) {\tiny{4}};
\node[anchor=north] at (0.25, 0) {\color{white}{\tiny{1}}};
\end{tikzpicture}
\end{center}
In fact, the last diagram is our visualization for the correspondence between staircase walks in a $3 \times 4$-rectangle and pairs of partitions whose $(4,3)$-overlap is equal to $\sub(\lambda, K)' = (4, 3, 2, 1, 1, 1, 1)$. According to Lemma~\ref{3_lem_overlap_with_subpartition}, the pair of partitions corresponding to this diagram is $\left( \lambda', \sub\left( \tilde{\lambda}, C(K)\right) \right)$. The following proposition states that this algorithmic procedure is invertible.
\end{ex}

\begin{prop} \label{3_prop_subpartition_interpretation_overlap} For a fixed partition $\kappa \subset \left\langle (m + n)^l \right\rangle$, there is a 1-to-1 correspondence between 
\begin{enumerate}
\item $\{(\mu, \nu): \mu \star_{m,n} \nu = \kappa'\}$
\item $\{(\lambda, K): \lambda \subset \left\langle m^{n + l} \right\rangle, K \subset [n + l], l(K) = l \text{ and } \sub_{n + l}(\lambda, K) = \kappa\}$
\end{enumerate}
given by the following mapping:
\begin{align} \label{3_prop_subpartition_interpretation_overlap_map}
(\lambda, K) \mapsto \left(\lambda', \sub_{n + l} \left(\tilde\lambda, C_{n + l}(K)\right)\right).
\end{align} 
\end{prop}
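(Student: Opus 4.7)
The approach is to verify well-definedness directly from Lemma~\ref{3_lem_overlap_with_subpartition} and then to build an explicit two-sided inverse. For well-definedness, note that the hypothesis $l(K) = l$ forces $l(C_{n+l}(K)) = n$, so the lemma gives $\lambda' \star_{m,n} \sub_{n+l}(\tilde\lambda, C_{n+l}(K)) = \kappa'$, placing the image in the prescribed target set. For injectivity, suppose $(\lambda_1, K_1)$ and $(\lambda_2, K_2)$ have the same image $(\mu, \nu)$. Then $\lambda_1 = \mu' = \lambda_2 =: \lambda$ by the first coordinate, and each $K_i$ satisfies $(\lambda + \rho_{n+l})_{K_i} = \kappa + \rho_l$; since $\lambda + \rho_{n+l}$ is strictly decreasing and hence has pairwise distinct entries, the subsequence $K_i$ realizing this target is unique, forcing $K_1 = K_2$.

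For surjectivity, start with $(\mu, \nu)$ satisfying $\mu \star_{m,n} \nu = \kappa'$ and set $\lambda := \mu'$. From $\kappa \subset \langle (m+n)^l \rangle$ we have $\kappa'_1 \leq l$, and since $\mu + \rho_m$ is a subsequence of $\kappa' + \rho_{m+n}$ by Definition~\ref{3_defn_overlap}, this gives $\mu_1 \leq n + l$; hence $\lambda \subset \langle m^{n+l} \rangle$. The heart of the argument is then to exhibit $K \subset [n+l]$ with $l(K) = l$ and $\sub_{n+l}(\lambda, K) = \kappa$, which amounts to the set-theoretic inclusion
\[
\kappa + \rho_l \subseteq \mu' + \rho_{n+l} \quad \text{inside} \quad \{0, 1, \dots, m + n + l - 1\}.
\]

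To prove this inclusion I would invoke the standard rectangle duality: for a partition $\alpha \subset \langle a^b \rangle$, the $b$-element set $\alpha + \rho_b \subset \{0, \dots, a+b-1\}$ and the $a$-element set $\alpha' + \rho_a \subset \{0, \dots, a+b-1\}$ are reflection-complementary under $s \mapsto (a+b-1) - s$. Observing that $\mu \subset \langle (n+l)^m \rangle$, $\nu \subset \langle (m+l)^n \rangle$, and $\kappa' \subset \langle l^{m+n} \rangle$ all have rectangle-perimeter $m+n+l$, the overlap identity $\kappa' + \rho_{m+n} = (\mu + \rho_m) \sqcup (\nu + \rho_n)$ becomes an equality of subsets of $\{0, \dots, m+n+l-1\}$. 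Applying the reflection and taking complements on both sides yields $\mu' + \rho_{n+l} = (\kappa + \rho_l) \sqcup \overline{\nu + \rho_n}$, from which the desired inclusion drops out; the injectivity argument then produces a unique $K$.

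It remains to check the second coordinate of the image. Applying Lemma~\ref{3_lem_overlap_with_subpartition} to the just-constructed $(\lambda, K)$ gives $\mu \star_{m,n} \sub_{n+l}(\tilde\lambda, C_{n+l}(K)) = \kappa'$. Combined with the hypothesis $\mu \star_{m,n} \nu = \kappa'$ and the fact that $\kappa' + \rho_{m+n}$ is strictly decreasing (so its subsequence $\mu + \rho_m$ determines the complementary subsequence uniquely), we conclude $\sub_{n+l}(\tilde\lambda, C_{n+l}(K)) = \nu$, completing the verification. The main technical hurdle is the bookkeeping in the rectangle-duality step: one must track that four partitions living in three different rectangles can nonetheless be compared inside the single ambient set $\{0, \dots, m+n+l-1\}$ precisely because their rectangles share the same perimeter.
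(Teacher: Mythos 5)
Your proof is correct, but the surjectivity step takes a genuinely different route from the paper. The well-definedness and injectivity parts coincide in substance (the paper reads injectivity off the second coordinate via $(\tilde\lambda + \rho_{n+l})_{C_{n+l}(K)} = \nu + \rho_n$, you read it off the domain condition $(\lambda + \rho_{n+l})_K = \kappa + \rho_l$; both are one-line consequences of strict decrease). For bijectivity, however, the paper never constructs a preimage: it shows that both sets have cardinality $\binom{m+n}{m}$ --- the first by Proposition~\ref{3_prop_visual_interpretation_overlap}, the second by an induction on the length of $\kappa$ that manufactures an element of $\calS_2(\kappa)$ from one of $\calS_2((\kappa_1,\dots,\kappa_{i-1},0))$ --- and then lets injectivity force bijectivity. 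You instead exhibit the inverse explicitly: given $\mu \star_{m,n} \nu = \kappa'$, set $\lambda = \mu'$ and recover the unique $K$ from the inclusion $\kappa + \rho_l \subseteq \mu' + \rho_{n+l}$, which you deduce by complementing the overlap relation $\kappa' + \rho_{m+n} = (\mu + \rho_m) \sqcup (\nu + \rho_n)$ inside $\{0,\dots,m+n+l-1\}$ via the classical reflection duality between $\alpha + \rho_b$ and $\alpha' + \rho_a$ for $\alpha \subset \langle a^b \rangle$; that duality is standard and also follows from Lemma~\ref{3_lem_macdonald_page_3} combined with the complementarity of $\mu(\pi)$ and $\nu(\pi)$, so it is available within the paper's toolkit. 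The uniqueness of the complementary subsequence of the strictly decreasing sequence $\kappa' + \rho_{m+n}$ then pins down the second coordinate as $\nu$, exactly as you argue. Your route buys an explicit inverse map (the paper only sketches it informally after the proposition, via marked staircase walks) and avoids both the counting and the induction; the paper's route avoids the beta-set bookkeeping and recycles Proposition~\ref{3_prop_visual_interpretation_overlap}. Two cosmetic points: your displayed identity should read $\mu' + \rho_{n+l} = (\kappa + \rho_l) \sqcup \mathrm{refl}(\nu + \rho_n)$, i.e.\ the overline must mean the reflected image of $\nu + \rho_n$ rather than its set complement (cardinalities $l + n$ versus $l + (m+l)$ make the intended reading unambiguous), and the quantity shared by the three rectangles is the semi-perimeter $a + b = m + n + l$, not the perimeter.
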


\begin{proof} By Lemma~\ref{3_lem_overlap_with_subpartition}, the map in \eqref{3_prop_subpartition_interpretation_overlap_map} is well defined. We observe that it is also injective. Indeed, if $(\lambda, K)$ and $(\eta, L)$ are both mapped to $(\mu, \nu)$, then $\lambda = \mu' = \eta$ and hence $$\left( \tilde\lambda + \rho_{n + l}\right)_{C_{n + l}(K)} = \nu + \rho_n = \left( \tilde\lambda + \rho_{n + l}\right)_{C_{n + l}(L)},$$ which entails that $K = L$ because the three sequences in question are strictly decreasing. Rather than showing directly that the map in \eqref{3_prop_subpartition_interpretation_overlap_map} is surjective, we will prove that both of the sets stated in the proposition are of cardinality $\binom{m + n}{m}$. For the first set, this is an immediate consequence of Proposition~\ref{3_prop_visual_interpretation_overlap}. 

For the second set, we prove the claim by induction on the length of the partition $\kappa$. Let us denote the second set by $\calS_2(\kappa)$. For the base case, we compute its cardinality under the assumption that $\kappa$ is the empty partition. In this case, any pair $(\lambda, K)$ that lies in $\calS_2(\kappa)$ satisfies $\rho_l = \left( \lambda + \rho_{n + l} \right)_K$. Given that the latter sequence is strictly decreasing, this equality implies that $K = (n + 1, \dots, n + l)$ and $\lambda_K = \emptyset$. Hence, $\lambda$ ranges over all partitions that are contained in the rectangle $\langle m^n \rangle$, of which there are exactly $\binom{m + n}{m}$. 

For the induction step, consider some partition $\kappa$ of length $0 < i \leq l$. We will construct a pair $(\lambda, K) \in \calS_2(\kappa)$ from each pair $(\eta, L) \in \calS_2((\kappa_1, \dots, \kappa_{i - 1}, 0))$. Let $j$ be the largest index so that $\eta_{j - 1} + n + l - (j - 1) > \kappa_i + l - i$ (where we use the convention that $\eta_0$ is infinitely large). By definition, $L_{i - 1} < j \leq L_i$. Indeed,
\begin{align*}
\eta_{L_{i - 1}} + n + l - L_{i - 1} ={} & \kappa_{i - 1} + l - (i - 1) > \kappa_i + l - i
\intertext{and}
\eta_{L_i} + n + l - L_i ={} & 0 + l - i \leq \kappa_i + l - i
.
\end{align*}
Hence, $K = (L_1, \dots, L_{i - 1}, j, L_{i + 1}, \dots, L_l)$ defines a subsequence of $[n + l]$. Moreover, the definition of $j$ ensures that $\lambda = (\eta_1, \dots, \eta_{j - 1}, \kappa_i - i - n + j, \eta_j, \dots, \eta_{n + l - 1})$ is a partition. It is left to the reader to verify that the pair $(\lambda, K)$ is an element of $\calS_2(\kappa)$. The induction hypothesis thus allows us to conclude that the cardinality of the set $\calS_2(\kappa)$ is at least $\binom{m + n}{m}$. Recalling that the map in \eqref{3_prop_subpartition_interpretation_overlap_map} is an injection from $\calS_2(\kappa)$ to a set of cardinality $\binom{m + n}{m}$ completes the proof.
\end{proof}

Keeping in mind Example~\ref{3_ex_visualize_construction_in_lem_overlap_with_subpartition}, it is easy to give a visual description of the inverse of the map defined in \eqref{3_prop_subpartition_interpretation_overlap_map}: Given a pair of partitions $\mu$ and $\nu$ whose $(m,n)$-overlap equals $\kappa'$, use the visual interpretation described in Example~\ref{3_ex_visual_interpretation_overlap} to associate it to a staircase walk $\pi \in \mathfrak{P}(n,m)$ labeled by $\kappa'$. Iteratively construct a staircase walk $\tau \in \mathfrak{P}(n + l, m)$ by inserting a \emph{marked} horizontal step between any two adjacent steps of $\pi$ with distinct labels and simultaneously decreasing by 1 the labels of all steps to the right of the insertion. Here we use the convention that the ``label'' before the first step is $l$, while the ``label'' after the last step is 0. As illustrated in Example~\ref{3_ex_visualize_construction_in_lem_overlap_with_subpartition}, we can then map the pair $(\mu, \nu)$ to the partition $\lambda = \mu(\tau)'$ and the following subsequence $K \subset [n]$: $K \sorteq (n + l - i + 1: H(\pi)_i \text{ is marked})$.

\section{More overlap identities} \label{3_section_more_overlap_identities}
Applying the different ways of seeing the overlap of two partitions, which we discussed in the preceding section, to the second overlap identity allows us to derive more overlap identities. This will allow us to regard the dual Cauchy identity as an overlap identity.

\subsection{Variations on the second overlap identity}
\begin{cor} 
Let $0 \leq l \leq \min\{n - k, n\}$. Let $\calS$, $\calT$ and $\calY$ be sets containing $l$, $n - l$ and $m$ variables, respectively, so that $\Delta(\calY) \neq 0$ and $\Delta(\calS; \calT) \neq 0$. Suppose that $k$ is the $(m,n)$-index of a partition $\lambda$, then
\begin{align} \label{3_cor_first_visualization_of_overlap_LS_eq}
\begin{split} 
LS_{\lambda} (-(\calS \cup \calT); \calY) 
={} & \sum_{\pi \in \mathfrak{P}(m + n - k - l, l)} (-1)^{|\nu(\pi_1)|} \frac{\Delta\left(\calY_{H(\pi_2)}; \calS \right) \Delta\left(\calT; \calY_{V(\pi_2)}\right)}{\Delta \left(\calY_{H(\pi_2)}; \calY_{V(\pi_2)}\right) \Delta(\calT; \calS)} \\
& \times LS_{\mu(\pi_1) + \lambda_{V(\pi_1)} - \left\langle (m - k)^{l(V(\pi_1))} \right\rangle} \left(-\calS; \calY_{V(\pi_2)}\right) \\
& \times LS_{\nu(\pi_1)' + \lambda_{H(\pi_1)} \cup \lambda_{(n + 1 - k, n + 2 - k, \dots )}} \left(-\calT; \calY_{H(\pi_2)}\right)
\end{split}
\end{align}
where $\pi_1$ denotes the $n - k$ first steps of $\pi$, while $\pi_2$ denotes the $m$ last steps of $\pi$. We view $\pi_1$ and $\pi_2$ as staircase walks inside the appropriate rectangles.
\end{cor}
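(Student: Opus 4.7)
The plan is to interpret the triple sum on the right-hand side of Theorem~\ref{3_thm_lapalace_expansion_new_LS} in terms of staircase walks by invoking Proposition~\ref{3_prop_visual_interpretation_overlap} twice (for the inner sum over overlapping pairs and for the outer sum over subsets of $\calY$), and then glue the two walks into a single walk in $\mathfrak{P}(m+n-k-l,\,l)$.

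First, I would fix $p$ and apply Proposition~\ref{3_prop_visual_interpretation_overlap} to the innermost sum: the pairs $(\mu,\nu)$ with $\mu \star_{l-p,\,n-k-l+p} \nu = \lambda_{[n-k]}$ are in bijection with staircase walks $\pi_1 \in \mathfrak{P}(n-k-l+p,\,l-p)$ via
\[
\mu = \mu(\pi_1) + \lambda_{V(\pi_1)}, \qquad \nu = \nu(\pi_1)' + \lambda_{H(\pi_1)},
\]
(using $V(\pi_1),H(\pi_1)\subset[n-k]$ so that $(\lambda_{[n-k]})_{V(\pi_1)} = \lambda_{V(\pi_1)}$), and with $\varepsilon_{l-p,\,n-k-l+p}(\mu,\nu) = (-1)^{|\nu(\pi_1)|}$. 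Similarly, choosing a disjoint decomposition $\calU \cup_{p,\,m-p} \calV \sorteq \calY$ is the same as choosing a staircase walk $\pi_2 \in \mathfrak{P}(m-p,\,p)$, where $\calU = \calY_{V(\pi_2)}$ (of length $p$) and $\calV = \calY_{H(\pi_2)}$ (of length $m-p$).

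Next, I would observe that $\pi_1$ carries $n-k$ steps and $\pi_2$ carries $m$ steps, with total horizontal count $(n-k-l+p)+(m-p)=m+n-k-l$ and total vertical count $(l-p)+p=l$. Hence concatenating $\pi_1$ followed by $\pi_2$ produces a staircase walk $\pi \in \mathfrak{P}(m+n-k-l,\,l)$, of which $\pi_1$ is the first $n-k$ steps and $\pi_2$ is the last $m$ steps. Conversely, any such $\pi$ splits uniquely in this way, and the parameter $p$ is recovered as $p = l - l(V(\pi_1))$ (the number of vertical steps among the last $m$ steps). Since the hypothesis $l \leq n-k$ ensures that the rectangle dimensions are non-negative for every $0 \leq p \leq \min\{l,m\}$, the triple sum $\sum_{p}\sum_{\calU,\calV}\sum_{\mu,\nu}$ of Theorem~\ref{3_thm_lapalace_expansion_new_LS} collapses into a single sum $\sum_{\pi \in \mathfrak{P}(m+n-k-l,\,l)}$.

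Finally, I would substitute the bijection data into \eqref{3_thm_lapalace_expansion_new_LS_eq}. The Vandermonde quotient rewrites verbatim as
\[
\frac{\Delta(\calV;\calS)\,\Delta(\calT;\calU)}{\Delta(\calV;\calU)\,\Delta(\calT;\calS)} = \frac{\Delta(\calY_{H(\pi_2)};\calS)\,\Delta(\calT;\calY_{V(\pi_2)})}{\Delta(\calY_{H(\pi_2)};\calY_{V(\pi_2)})\,\Delta(\calT;\calS)},
\]
while the two Littlewood-Schur factors become, using $l(V(\pi_1))=l-p$,
\[
LS_{\mu(\pi_1)+\lambda_{V(\pi_1)} - \langle (m-k)^{l(V(\pi_1))}\rangle}(-\calS;\calY_{V(\pi_2)}) \quad\text{and}\quad LS_{\nu(\pi_1)'+\lambda_{H(\pi_1)}\,\cup\,\lambda_{(n+1-k,\dots)}}(-\calT;\calY_{H(\pi_2)}),
\]
and the sign $\varepsilon(\mu,\nu)$ becomes $(-1)^{|\nu(\pi_1)|}$. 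This yields \eqref{3_cor_first_visualization_of_overlap_LS_eq}. No genuine obstacle arises: the content of the proof is the bookkeeping check that the splitting $\pi \leftrightarrow (\pi_1,\pi_2,p)$ matches the summation ranges exactly, together with the observation that $\lambda_{H(\pi_1)}$ can be concatenated with $\lambda_{(n+1-k,\dots)}$ as a partition because $\lambda_{H(\pi_1)}$ consists of parts of $\lambda$ indexed in $[n-k]$, each of which is at least $\lambda_{n-k} \geq \lambda_{n+1-k}$.
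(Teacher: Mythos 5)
Your proposal is correct and follows essentially the same route as the paper: both proofs combine Theorem~\ref{3_thm_lapalace_expansion_new_LS} with the bijection of Proposition~\ref{3_prop_visual_interpretation_overlap}, identifying the sum over overlapping pairs with walks $\pi_1$, the sum over decompositions of $\calY$ with walks $\pi_2$, and the sum over $p$ with the splitting of a single walk $\pi \in \mathfrak{P}(m+n-k-l,l)$ into its first $n-k$ and last $m$ steps. The only difference is direction of travel (you fold the theorem's triple sum into the corollary's single sum, the paper unfolds the corollary's sum into the theorem's), which is immaterial.
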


The definitions of the ``partial'' staircase walks $\pi_1$ and $\pi_2$ are best explained by means of a diagram: let $m = 5$, $n = 8$, $k = 4$ and $l = 3$, then the following staircase walk $\pi \in \mathfrak{P}(m + n - k - l, l)$ splits into $\pi_1 \in \mathfrak{P}(2,2)$ and $\pi_2 \in \mathfrak{P}(4,1)$ comprising of $n - k$ and $m$ steps, respectively.
\begin{center}
\begin{tikzpicture} 
\node(pi) at (-0.5, 0.75) {$\pi =$};
\draw[step=0.5cm, thin] (0, 0) grid (3, 1.5);
\draw[ultra thick, ->] (1, 0) -- (0, 0);
\draw[ultra thick] (1, 0) -- (1, 0.5);
\draw[ultra thick] (1, 0.5) -- (2.5, 0.5);
\draw[ultra thick] (2.5, 0.5) -- (2.5, 1.5);
\draw[ultra thick] (2.5, 1.5) -- (3, 1.5);
\end{tikzpicture}
\begin{tikzpicture} 
\node(pi) at (-0.5, 0.75) {$=$};
\fill[black!10!white] (0, 0) rectangle (2, 0.5);
\fill[black!30!white] (2, 0.5) rectangle (3, 1.5);
\draw[step=0.5cm, thin] (0, 0) grid (3, 1.5);
\draw[ultra thick, ->] (1, 0) -- (0, 0);
\draw[ultra thick] (1, 0) -- (1, 0.5);
\draw[ultra thick] (1, 0.5) -- (2.5, 0.5);
\draw[ultra thick] (2.5, 0.5) -- (2.5, 1.5);
\draw[ultra thick] (2.5, 1.5) -- (3, 1.5);
\draw[ultra thick, ->] (2.5, 0.5) -- (2, 0.5);
\end{tikzpicture}
\end{center}
The diagrams of $\pi_1$ and $\pi_2$ are colored in different shades of gray.

\begin{proof} The right-hand side of the equation in \eqref{3_cor_first_visualization_of_overlap_LS_eq} is equal to
\begin{align*}
\RHS ={} & \sum_{p = 0}^{\min\{l,m\}} \sum_{\pi_2 \in \mathfrak{P}(m - p, p)} \hspace{10pt} \sum_{\substack{\pi_1 \in \mathfrak{P}(n - k - l + p, l - p)}} \frac{\Delta\left(\calY_{H(\pi_2)}; \calS \right) \Delta\left(\calT; \calY_{V(\pi_2)}\right)}{\Delta \left(\calY_{H(\pi_2)}; \calY_{V(\pi_2)}\right) \Delta(\calT; \calS)} \\
& \times (-1)^{|\nu(\pi_1)|} LS_{\mu(\pi_1) + \lambda_{V(\pi_1)} - \left\langle (m - k)^{l(V(\pi_1))} \right\rangle} \left(-\calS; \calY_{V(\pi_2)}\right) \\
& \times LS_{\nu(\pi_1)' + \lambda_{H(\pi_1)} \cup \lambda_{(n + 1 - k, n + 2 - k, \dots )}} \left(-\calT; \calY_{H(\pi_2)}\right)
.
\intertext{Setting $\calU = \calY_{V(\pi)}$ and $\calV = \calY_{H(\pi)}$, we may view the sum over $\pi_2$ as a sum over all subsequences $\calU$, $\calV \subset \calY$ with the property that $\calU \cup_{p, m - p} \calV \sorteq \calY$:}
\RHS ={} & \sum_{p = 0}^{\min\{l,m\}} \sum_{\substack{\calU, \calV \subset \calY: \\ \calU \cup_{p, m - p} \calV \sorteq \calY}} \hspace{10pt} \sum_{\substack{\pi_1 \in \mathfrak{P}(n - k - l + p, l - p)}} \frac{\Delta\left(\calV; \calS \right) \Delta\left(\calT; \calU \right)}{\Delta \left(\calV; \calU \right) \Delta(\calT; \calS)} \\
& \times (-1)^{|\nu(\pi_1)|} LS_{\mu(\pi_1) + \lambda_{V(\pi_1)} - \left\langle (m - k)^{l(V(\pi_1))} \right\rangle} \left(-\calS; \calU \right) \\
& \times LS_{\nu(\pi_1)' + \lambda_{H(\pi_1)} \cup \lambda_{(n + 1 - k, n + 2 - k, \dots )}} \left(-\calT; \calV \right)
.
\end{align*}
Proposition~\ref{3_prop_visual_interpretation_overlap} allows us to conclude that this expression is equal to the right-hand of the equality in \eqref{3_thm_lapalace_expansion_new_LS_eq}. Hence, the result follows directly from the second overlap identity (\textit{i.e.}\ Theorem~\ref{3_thm_lapalace_expansion_new_LS}).
\end{proof}

\begin{cor} \label{3_cor_second_overlap_id_labeled_walks}
Let $\lambda$ be a partition and let $\calS$ and $\calT$ be sets consisting of $m$ and $n$ variables, respectively. If $\Delta(\calS; \calT) \neq 0$, then
\begin{align} \label{3_cor_second_overlap_id_labeled_walks_eq}
\schur_\lambda(\calS \cup \calT) 
={} & \sum_{\substack{\pi \in \mathfrak{P}(n,m)}} \frac{(-1)^{|\nu(\pi)|} \schur_{\mu(\pi) + \lambda_{V(\pi)}} (\calS) \schur_{\nu(\pi)' + \lambda_{H(\pi)}} (\calT)}{\Delta(\calS; \calT)}
.
\end{align}
\end{cor}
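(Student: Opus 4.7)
The plan is to derive this corollary as a direct consequence of the second overlap identity for Schur functions (Corollary~\ref{3_cor_laplace_expansion_schur_new}) together with the bijective visualization of overlap provided by Proposition~\ref{3_prop_visual_interpretation_overlap}. Roughly speaking, the former already expresses $\schur_\lambda(\calS \cup \calT)$ as a sum indexed by pairs of partitions $(\mu,\nu)$ with $\mu \star_{m,n} \nu = \lambda$, and the latter identifies this index set with $\mathfrak{P}(n,m)$.

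Concretely, I would start by applying Corollary~\ref{3_cor_laplace_expansion_schur_new} to write
\begin{align*}
\schur_\lambda(\calS \cup \calT) \;=\; \sum_{\substack{\mu,\nu:\\ \mu \star_{m,n} \nu = \lambda}} \frac{\varepsilon(\mu,\nu)\,\schur_\mu(\calS)\,\schur_\nu(\calT)}{\Delta(\calS;\calT)}.
\end{align*}
Then I would invoke Proposition~\ref{3_prop_visual_interpretation_overlap} to reparameterize each pair $(\mu,\nu)$ appearing in this sum as $\bigl(\mu(\pi)+\lambda_{V(\pi)},\,\nu(\pi)'+\lambda_{H(\pi)}\bigr)$ for a unique staircase walk $\pi \in \mathfrak{P}(n,m)$. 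The sign formula from that same proposition replaces $\varepsilon(\mu,\nu)$ by $(-1)^{|\nu(\pi)|}$, whereupon the claimed identity~\eqref{3_cor_second_overlap_id_labeled_walks_eq} drops out immediately.

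There is essentially no technical obstacle, as the argument is a purely formal reindexing of an already established identity. The only matter requiring a brief sanity check is that pairs $(\mu,\nu)$ whose $(m,n)$-overlap is infinity need not be treated separately: they do not contribute to the sum of Corollary~\ref{3_cor_laplace_expansion_schur_new}, and they have no preimage under the bijection of Proposition~\ref{3_prop_visual_interpretation_overlap}, which is stated only for pairs whose overlap is a genuine (finite) partition equal to $\lambda$. Once this is noted, no case analysis is needed, and the proof reduces to a single change of summation variable.
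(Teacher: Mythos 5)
Your proposal is correct and matches the paper's own proof, which likewise deduces the identity from Corollary~\ref{3_cor_laplace_expansion_schur_new} by reindexing the sum over pairs $(\mu,\nu)$ with $\mu \star_{m,n} \nu = \lambda$ via the bijection and sign formula of Proposition~\ref{3_prop_visual_interpretation_overlap}. Your remark that pairs with infinite overlap require no separate treatment is a harmless extra sanity check.
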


\begin{proof} Owing to the correspondence given in Proposition~\ref{3_prop_visual_interpretation_overlap}, this identity is a direct consequence of Corollary~\ref{3_cor_laplace_expansion_schur_new}.
\end{proof}

\begin{cor} \label{3_cor_overlap_with_subpartitions_LS} Let $m$, $n$, $\tilde n$, $l$ and $q$ be non-negative integers such that $\tilde n \leq q$. Let $\calS$, $\calT$ and $\calY$ be sets of variables of length $m$, $n + \tilde n$ and $q$, respectively, so that $\Delta(\calY) \neq 0$ and $\Delta(\calS, \calT) \neq 0$. Suppose that a partition $\kappa \subset \left\langle (m + n)^l \right\rangle$ satisfies $(m + n, q - \tilde n) \in \kappa$, then
\begin{align*}
LS_{\kappa'}(-(\calS \cup \calT), \calY) ={} & \sum_{p = 0}^{\min\{m, q\}} \sum_{\substack{\calU, \calV \subset \calY: \\ \calU \cup_{p, q - p} \calV \sorteq \calY}} \sum_{\substack{\lambda \subset \left\langle (m - p)^{n + p + l} \right\rangle \\ K \subset [n + p + l] \text{ with } l(K) = l: \\ \sub(\lambda, K) = \kappa}} \frac{\Delta(\calV; \calS) \Delta(\calT; \calU)}{\Delta(\calV; \calU) \Delta(\calT; \calS)} \\
& \times (-1)^{\left| \tilde\lambda_{C_{n + p + l}(K)} \right|} LS_{\lambda' - \left\langle (q - \tilde n)^{m - p} \right\rangle} (-\calS; \calU) \\
& \times LS_{\sub\left(\tilde\lambda, C_{n + p + l}(K)\right)} (-\calT; \calV)
.
\end{align*}
\end{cor}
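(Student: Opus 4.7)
The plan is to derive this corollary as a direct consequence of the second overlap identity (Theorem~\ref{3_thm_lapalace_expansion_new_LS}) applied to $LS_{\kappa'}(-(\calS \cup \calT); \calY)$, followed by a translation of the inner sum via the bijection of Proposition~\ref{3_prop_subpartition_interpretation_overlap}. No genuinely new computation should be required; the work is in matching parameters carefully.

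First I would identify the role each set of variables plays in Theorem~\ref{3_thm_lapalace_expansion_new_LS}: in the notation of that theorem, take $\lambda \leftrightarrow \kappa'$, the role of the $n$-element set played by $\calS \cup \calT$ (of size $m+n+\tilde n$), the role of $\calY$ played by $\calY$ itself (of size $q$), and the theorem's splitting size $l$ set equal to our $m$, so that $\calS$ and $\calT$ naturally play the roles of the theorem's $\calS$ (size $l = m$) and $\calT$ (size $n - l = n + \tilde n$). I would then verify that the $(q, m+n+\tilde n)$-index of $\kappa'$ equals $\tilde n$: the hypothesis $(m+n, q-\tilde n) \in \kappa$ transposes to $(q-\tilde n, m+n) \in \kappa'$, which realizes $k = \tilde n$ via the second formulation of Definition~\ref{3_defn_index}, while $\kappa \subset \langle (m+n)^l \rangle$ forces $\kappa'_j = 0$ for $j > m+n$ and thus rules out any smaller $k$.

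Second I would substitute into Theorem~\ref{3_thm_lapalace_expansion_new_LS}. The truncation $\kappa'_{[n_{\text{thm}} - k]} = \kappa'_{[m+n]}$ is simply $\kappa'$ (since $l(\kappa') \leq m+n$), and the tail $\kappa'_{(m+n+1,\dots)}$ is empty, which eliminates the appended piece in the second factor. The subtracted rectangle becomes $\langle (q-\tilde n)^{m-p} \rangle$ and the overlap condition reads $\mu \star_{m-p, n+p} \nu = \kappa'$. This gives the intermediate identity
\begin{align*}
LS_{\kappa'}(-(\calS \cup \calT); \calY) ={} & \sum_{p=0}^{\min\{m,q\}} \sum_{\substack{\calU, \calV \subset \calY: \\ \calU \cup_{p, q-p} \calV \sorteq \calY}} \sum_{\substack{\mu, \nu: \\ \mu \star_{m-p, n+p} \nu = \kappa'}} \frac{\Delta(\calV; \calS) \Delta(\calT; \calU)}{\Delta(\calV; \calU) \Delta(\calT; \calS)} \\
& \times \varepsilon(\mu, \nu) LS_{\mu - \langle (q-\tilde n)^{m-p} \rangle}(-\calS; \calU) LS_\nu(-\calT; \calV).
\end{align*}

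Finally I would apply Proposition~\ref{3_prop_subpartition_interpretation_overlap} with its parameters $(m, n, l)$ replaced by $(m-p, n+p, l)$: the inner index set $\{(\mu,\nu) : \mu \star_{m-p, n+p} \nu = \kappa'\}$ is in bijection with $\{(\lambda, K) : \lambda \subset \langle (m-p)^{n+p+l} \rangle, K \subset [n+p+l], l(K) = l, \sub(\lambda, K) = \kappa\}$ via $(\lambda, K) \mapsto (\lambda', \sub(\tilde\lambda, C_{n+p+l}(K)))$. Substituting $\mu = \lambda'$ and $\nu = \sub(\tilde\lambda, C_{n+p+l}(K))$ into each summand, and invoking Lemma~\ref{3_lem_overlap_with_subpartition} to rewrite the sign as $\varepsilon(\mu,\nu) = (-1)^{|\tilde\lambda_{C_{n+p+l}(K)}|}$, one recovers the right-hand side of the corollary verbatim. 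The only real obstacle is the parameter bookkeeping in the index computation and in checking that $l(C_{n+p+l}(K)) = n+p$ so that the signs from Lemma~\ref{3_lem_overlap_with_subpartition} and from the overlap match; everything else is formal substitution.
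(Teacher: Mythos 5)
Your proposal is correct and follows the same route as the paper's own (much terser) proof: verify that the $(q, m+n+\tilde n)$-index of $\kappa'$ is $\tilde n$, apply the second overlap identity (Theorem~\ref{3_thm_lapalace_expansion_new_LS}) with $\calS \cup \calT$ in the role of the $n$-element set, and then substitute the bijection of Proposition~\ref{3_prop_subpartition_interpretation_overlap} (with the sign supplied by Lemma~\ref{3_lem_overlap_with_subpartition}) into the inner sum. Your parameter bookkeeping, including the index computation and $l(C_{n+p+l}(K)) = n+p$, is accurate, so nothing further is needed.
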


\begin{proof} Notice that the assumptions on $\kappa$ entail that the $(q, m + n + \tilde n)$-index of $\kappa'$ is $\tilde n$. Thus, the result follows by substituting the correspondence described in Proposition~\ref{3_prop_subpartition_interpretation_overlap} in the right-hand side of the equality in \eqref{3_thm_lapalace_expansion_new_LS_eq}.
\end{proof}

As usual the formula looks considerably nicer specialized to Schur functions, or equivalently to the case $\calY = \emptyset$.

\begin{cor} \label{3_cor_overlap_with_subpartitions_schur} Let $\calS$ and $\calT$ be two sets of variables of lengths $m$ and $n$, respectively, with the property that $\Delta(\calS; \calT) \neq 0$. For any partition $\kappa$ contained in the rectangle $\left\langle (m + n)^l \right\rangle$,
\begin{align*}
\schur_{\kappa'}(\calS \cup \calT) = \sum_{\substack{\lambda \subset \left\langle m^{n + l} \right\rangle \\ K \subset [n + l] \text{ with } l(K) = l: \\ \sub(\lambda, K) = \kappa}} \frac{(-1)^{\left|\tilde\lambda_{C_{n + l}(K)} \right|} \schur_{\lambda'}(\calS) \schur_{\sub\left(\tilde\lambda, C_{n + l}(K)\right)} (\calT)}{\Delta(\calS; \calT)}
.
\end{align*}
\end{cor}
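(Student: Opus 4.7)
The plan is to specialize Corollary~\ref{3_cor_overlap_with_subpartitions_LS} to $\calY = \emptyset$, following the template that turned Theorem~\ref{3_thm_lapalace_expansion_new_LS} into Corollary~\ref{3_cor_laplace_expansion_schur_new}. Setting $q = 0$ forces $\tilde n = 0$ (since $\tilde n \leq q$), makes the requirement $(m+n, q - \tilde n) \in \kappa$ automatic by the convention from Section~\ref{3_section_partitions} that $(i,0)$ lies in every partition, and leaves only $p = 0$ in the sum, with $\calU = \calV = \emptyset$. The Littlewood-Schur functions then collapse to Schur functions through $LS_\mu(-\calX; \emptyset) = \schur_\mu(-\calX)$, and every $\Delta$-factor involving the empty sets equals $1$, so the double sum over subsets of $\calY$ disappears and only the sum over $(\lambda, K)$ survives.

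What remains is to reconcile signs. I would first use homogeneity, $\schur_\mu(-\calX) = (-1)^{|\mu|} \schur_\mu(\calX)$, to convert $\schur_{\kappa'}(-(\calS \cup \calT))$, $\schur_{\lambda'}(-\calS)$ and $\schur_{\sub(\tilde\lambda,\, C_{n+l}(K))}(-\calT)$ into their un-negated counterparts, picking up $(-1)^{|\kappa|}$ on the left and $(-1)^{|\lambda| + |\sub(\tilde\lambda,\, C_{n+l}(K))|}$ on the right. The identity $\Delta(\calT; \calS) = (-1)^{mn} \Delta(\calS; \calT)$ contributes one further factor of $(-1)^{mn}$ in the denominator.

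The main (and essentially only) obstacle is then to verify that these signs cancel. For this I would invoke Lemma~\ref{3_lem_overlap_with_subpartition}, which asserts $\lambda' \star_{m,n} \sub_{n+l}(\tilde\lambda,\, C_{n+l}(K)) = \kappa'$, together with the general size identity $|\mu \star_{m,n} \nu| = |\mu| + |\nu| - mn$. The latter follows by summing both sides of the sorting relation \eqref{3_condition_overlap} and using $|\rho_{m+n}| = \binom{m+n}{2} = \binom{m}{2} + \binom{n}{2} + mn$. Applied to our situation, it gives $|\kappa| = |\lambda| + |\sub(\tilde\lambda,\, C_{n+l}(K))| - mn$, so the accumulated sign $(-1)^{|\kappa| + mn + |\lambda| + |\sub(\tilde\lambda,\, C_{n+l}(K))|}$ is trivial, and the specialized formula collapses precisely onto the claim.
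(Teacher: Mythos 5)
Your proposal is correct and takes essentially the same route as the paper, which presents this corollary precisely as the specialization of Corollary~\ref{3_cor_overlap_with_subpartitions_LS} to $\calY = \emptyset$ (so $q = \tilde n = p = 0$); your sign reconciliation via Lemma~\ref{3_lem_overlap_with_subpartition} and the size identity $|\mu \star_{m,n} \nu| = |\mu| + |\nu| - mn$ correctly fills in the detail the paper leaves implicit. (A slightly quicker way to handle the signs is to substitute $-\calS$ and $-\calT$ for $\calS$ and $\calT$ in the specialized identity and note $\Delta(-\calT; -\calS) = \Delta(\calS; \calT)$, but your argument is fine.)
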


\subsection{A first application}
In this section we present a small application of the second overlap identity. We have called it a first application because our recipe for mixed ratios, which we will present in a forthcoming paper \cite{mixed_ratios}, can be viewed as an application of the first overlap identity for Littlewood-Schur functions.  

Our first application is to derive the dual Cauchy identity from the second overlap identity, or rather from Corollary~\ref{3_cor_second_overlap_id_labeled_walks}. Although the result is classical, this elegant proof seems to be new. The proof relies on the following relationship between the Schur functions indexed by a partition $\lambda$ and it complement $\tilde\lambda$, respectively.

\begin{lem} \label{3_lem_Schur_indexed_by_complement} Let $\calX$ contain $n$ non-zero variables. If a partition $\lambda$ is a subset of the rectangle $\langle m^n \rangle$, then
\begin{align} \label{3_lem_Schur_indexed_by_complement_eq}
\schur_{\tilde\lambda}(\calX) = \schur_\lambda \left( \calX^{-1} \right) \elementary(\calX)^m
.
\end{align}
\end{lem}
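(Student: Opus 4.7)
The plan is to prove the identity directly from the determinantal definition of Schur functions given on page~\pageref{3_defn_Schur}, by matching the two sides as rational expressions in the variables of $\calX$. All non-zero variables of $\calX$ are invertible, so $\schur_\lambda(\calX^{-1})$ makes sense.

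First I would rewrite $\schur_{\tilde\lambda}(\calX)$ using the determinantal formula. Since $\tilde\lambda_j = m - \lambda_{n+1-j}$, the exponent of $x$ in the $j$-th column of the numerator is $m - \lambda_{n+1-j} + n - j$. Reindexing with $i = n+1-j$ (a reversal of the $n$ columns, which contributes a sign $(-1)^{\binom{n}{2}}$) turns the exponent into $m - \lambda_i + i - 1$, so
\begin{align*}
\schur_{\tilde\lambda}(\calX) = \frac{(-1)^{\binom{n}{2}} \det\bigl(x^{m - \lambda_i + i - 1}\bigr)_{x \in \calX,\, 1 \leq i \leq n}}{\Delta(\calX)}.
\end{align*}

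Next I would manipulate $\schur_\lambda(\calX^{-1})\elementary(\calX)^m$ to reach the same expression. The factor $\elementary(\calX)^m = \prod_{x \in \calX} x^m$ can be absorbed one factor $x^m$ per row of the determinant in the numerator of $\schur_\lambda(\calX^{-1})$, turning the $(x,j)$-entry $x^{-\lambda_j - n + j}$ into $x^{m - \lambda_j - n + j}$. For the denominator I would use the elementary computation
\begin{align*}
\Delta(\calX^{-1}) = \prod_{i<j}\frac{x_j - x_i}{x_i x_j} = \frac{(-1)^{\binom{n}{2}} \Delta(\calX)}{\elementary(\calX)^{n-1}},
\end{align*}
where the product $\prod_{i<j} x_i x_j$ collapses to $\elementary(\calX)^{n-1}$ because each variable appears $n-1$ times. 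The surviving factor $\elementary(\calX)^{n-1}$ in the numerator can again be absorbed row-by-row into the determinant, promoting the exponent from $m - \lambda_j - n + j$ to $m - \lambda_j + j - 1$. Collecting signs yields exactly the previous display, completing the proof.

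There is no real obstacle here beyond bookkeeping: the only thing to check is that the sign $(-1)^{\binom{n}{2}}$ arising from the column reversal on the left matches the sign $(-1)^{\binom{n}{2}}$ from $\Delta(\calX^{-1})$ on the right, and that the two powers of $\elementary(\calX)$ combine correctly to shift each row exponent by $m + (n-1)$, producing the identity. Since both sides are ultimately rational functions and we have verified equality on the level of the numerator and denominator, the lemma follows.
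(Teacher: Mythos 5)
Your proof is correct and follows essentially the same route as the paper: both start from the determinantal definition, absorb $\elementary(\calX)^m$ and an auxiliary factor $\elementary(\calX)^{n-1}$ row by row to shift the exponents to $m-\lambda_j+j-1$, and finish by reversing the column order. The only cosmetic difference is that you track the sign $(-1)^{\binom{n}{2}}$ explicitly through $\Delta(\calX^{-1})$ and the reversal on the left, whereas the paper reverses columns in numerator and denominator simultaneously so the signs cancel silently.
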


\begin{proof} This equality is a fairly immediate consequence of the determinantal definition for Schur functions:
\begin{align*}
\elementary(\calX)^m \schur_\lambda \left( \calX^{-1} \right) ={} & \frac{\elementary(\calX)^m \det \left( x^{-(\lambda_j + n - j)} \right)_{1 \leq j \leq n}}{\det \left( x^{-(n - j)} \right)_{1 \leq j \leq n}} \times \frac{\elementary(\calX)^{n - 1}}{\elementary(\calX)^{n - 1}}
\\
={} & \frac{\det \left( x^{m - \lambda_j + j - 1} \right)_{1 \leq j \leq n}}{\det \left( x^{j - 1} \right)_{1 \leq j \leq n}}
.
\intertext{Inverting the order of the columns in both the numerator and the denominator yields}
\elementary(\calX)^m \schur_\lambda \left( \calX^{-1} \right) ={} & \frac{\det \left( x^{\tilde\lambda_j + n - j} \right)_{1 \leq j \leq n}}{\det \left( x^{n - j} \right)_{1 \leq j \leq n}} = \schur_{\tilde\lambda}(\calX)
. \qedhere
\end{align*}
\end{proof}

\begin{cor} [dual Cauchy identity] Let $\calX$ and $\calY$ be two sets of variables. It holds that
\begin{align} \label{3_cor_dual_cauchy_id_eq}
\sum_\lambda \schur_\lambda(\calX) \schur_{\lambda'}(\calY) = \prod_{\substack{x \in \calX \\ y \in \calY}} (1 + xy).
\end{align}
\end{cor}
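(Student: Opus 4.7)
The plan is to apply Corollary~\ref{3_cor_second_overlap_id_labeled_walks} to the empty partition, and then transform the resulting identity by the substitution $\calY \mapsto -\calY^{-1}$, with Lemma~\ref{3_lem_Schur_indexed_by_complement} converting the Schur functions indexed by $\mu(\pi)$ into Schur functions indexed by the complementary partition $\nu(\pi)$. Throughout, set $n = l(\calX)$ and $m = l(\calY)$.

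First, take $\lambda = \emptyset$, $\calS = \calY$, and $\calT = \calX$ in Corollary~\ref{3_cor_second_overlap_id_labeled_walks}. Since $\schur_\emptyset \equiv 1$ and both $\lambda_{V(\pi)}$ and $\lambda_{H(\pi)}$ are zero sequences, the corollary collapses to the partition-of-unity identity
\begin{align*}
\Delta(\calY; \calX) = \sum_{\pi \in \mathfrak{P}(n,m)} (-1)^{|\nu(\pi)|} \schur_{\mu(\pi)}(\calY) \schur_{\nu(\pi)'}(\calX).
\end{align*}
Next, substitute $\calY$ by $-\calY^{-1}$, which is legitimate because both sides are rational functions in $\calY$. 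A direct factorization of each factor $-y^{-1} - x$ gives
\begin{align*}
\Delta(-\calY^{-1}; \calX) = (-1)^{mn} \elementary(\calY)^{-n} \prod_{x \in \calX, y \in \calY} (1 + xy),
\end{align*}
while Lemma~\ref{3_lem_Schur_indexed_by_complement} applied to $\calY$ and $\mu(\pi) \subset \langle n^m \rangle$, combined with the homogeneity relation $\schur_{\mu(\pi)}(-\calY^{-1}) = (-1)^{|\mu(\pi)|} \schur_{\mu(\pi)}(\calY^{-1})$, yields
\begin{align*}
\schur_{\mu(\pi)}(-\calY^{-1}) = (-1)^{|\mu(\pi)|} \schur_{\nu(\pi)}(\calY) \elementary(\calY)^{-n},
\end{align*}
where I use that $\widetilde{\mu(\pi)} = \nu(\pi)$ inside the rectangle $\langle n^m \rangle$. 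Since $|\mu(\pi)| + |\nu(\pi)| = mn$, the two sign factors $(-1)^{|\mu(\pi)|}$ and $(-1)^{|\nu(\pi)|}$ merge into a global $(-1)^{mn}$, which then cancels against the $(-1)^{mn}$ and the $\elementary(\calY)^{-n}$ coming from $\Delta(-\calY^{-1}; \calX)$.

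After this cancellation, the identity reads
\begin{align*}
\prod_{x \in \calX, y \in \calY} (1 + xy) = \sum_{\pi \in \mathfrak{P}(n,m)} \schur_{\nu(\pi)}(\calY) \schur_{\nu(\pi)'}(\calX).
\end{align*}
Reindexing by $\lambda = \nu(\pi)'$, the bijection $\pi \leftrightarrow \nu(\pi)$ between $\mathfrak{P}(n,m)$ and partitions contained in $\langle n^m \rangle$ shows that $\lambda$ runs over partitions contained in $\langle m^n \rangle$; extending the sum to all partitions costs nothing because $\schur_\lambda(\calX) = 0$ whenever $l(\lambda) > n$ and $\schur_{\lambda'}(\calY) = 0$ whenever $\lambda_1 > m$. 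This yields \eqref{3_cor_dual_cauchy_id_eq}. The main obstacle in executing this plan is the sign and scalar bookkeeping triggered by the substitution and by the passage from $\mu(\pi)$ to its complement $\nu(\pi)$; the key observation that makes everything collapse is that the sign $(-1)^{|\nu(\pi)|}$ already present in the partition-of-unity identity is precisely what is required to absorb the sign $(-1)^{|\mu(\pi)|}$ generated during the complementation.
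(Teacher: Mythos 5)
Your proof is correct and follows essentially the same route as the paper: both arguments combine Corollary~\ref{3_cor_second_overlap_id_labeled_walks} specialized to the empty partition with Lemma~\ref{3_lem_Schur_indexed_by_complement} and the complementarity of $\mu(\pi)$ and $\nu(\pi)$, the only (cosmetic) difference being that you run the computation from the staircase-walk expansion of $\Delta(\calY;\calX)$ and substitute $\calY \mapsto -\calY^{-1}$, whereas the paper starts from the dual Cauchy sum and inverts $\calX$ while negating $\calY$. The sign and degree bookkeeping in your version checks out.
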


\begin{proof} Suppose that $\calX$ and $\calY$ have length $n$ and $m$, respectively. Observe that only partitions $\lambda$ contained in the rectangle $\langle m^n \rangle$ contribute to the sum. As we may assume without loss of generality that no variable in $\calX$ vanishes, we can reformulate the left-hand side in \eqref{3_cor_dual_cauchy_id_eq} as
\begin{align*}
\sum_\lambda \schur_\lambda(\calX) \schur_{\lambda'}(\calY) ={} & \sum_{\lambda \subset \langle m^n \rangle} \elementary \left(\calX^{-1} \right)^{-m} \schur_{\tilde\lambda} \left(\calX^{-1}\right) \schur_{\lambda'}(\calY)
.
\intertext{Using the visual interpretation for $(m,n)$-complementary partitions, this reads}
\sum_\lambda \schur_\lambda(\calX) \schur_{\lambda'}(\calY) ={} & \elementary (\calX)^m \sum_{\pi \in \mathfrak{P}(m,n)} (-1)^{|\nu(\pi)|} \schur_{\mu(\pi)} \left(\calX^{-1}\right) \schur_{\nu(\pi)'}(-\calY)
\end{align*}
where we have also exploited the homogeneity of Schur functions to obtain a signed sum. This sum is essentially equal to the right-hand side in \eqref{3_cor_second_overlap_id_labeled_walks_eq}, specialized to the case that $\lambda$ is the empty partition. Hence, Corollary~\ref{3_cor_second_overlap_id_labeled_walks} allows us to conclude that the above expression is equal to the right-hand side in \eqref{3_cor_dual_cauchy_id_eq}.
\end{proof}

Therefore, the dual Cauchy identity can be viewed as a special case of the second overlap identity: it corresponds to pairs of partitions whose overlap is empty.


\bibliographystyle{alpha}
\bibliography{bib_thesis}

\end{document}